\documentclass[pdflatex,sn-mathphys-num]{sn-jnl}% Math and Physical Sciences Numbered Reference Style 
%%\documentclass[pdflatex,sn-mathphys-ay]{sn-jnl}% Math and Physical Sciences Author Year Reference Style
%%\documentclass[pdflatex,sn-aps]{sn-jnl}% American Physical Society (APS) Reference Style
%%\documentclass[pdflatex,sn-vancouver,Numbered]{sn-jnl}% Vancouver Reference Style
%%\documentclass[pdflatex,sn-apa]{sn-jnl}% APA Reference Style 
%%\documentclass[pdflatex,sn-chicago]{sn-jnl}% Chicago-based Humanities Reference Style

%%%% Standard Packages
%%<additional latex packages if required can be included here>

\usepackage{graphicx}%
\usepackage{multirow}%
\usepackage{amsmath,amssymb,amsfonts}%
\usepackage{amsthm}%
\usepackage{mathrsfs}%
\usepackage[title]{appendix}%
\usepackage[dvipsnames]{xcolor}%
\usepackage{textcomp}%
\usepackage{manyfoot}%
\usepackage{booktabs}%
\usepackage{algorithm}%
\usepackage{algorithmicx}%
\usepackage{algpseudocode}%
\usepackage{comment}
\usepackage{enumitem}
\usepackage{listings}%
\usepackage{subcaption}
\usepackage[normalem]{ulem}
\definecolor{lightblue}{HTML}{73BFF9}
%%%%

%%%%%=============================================================================%%%%
%%%%  Remarks: This template is provided to aid authors with the preparation
%%%%  of original research articles intended for submission to journals published 
%%%%  by Springer Nature. The guidance has been prepared in partnership with 
%%%%  production teams to conform to Springer Nature technical requirements. 
%%%%  Editorial and presentation requirements differ among journal portfolios and 
%%%%  research disciplines. You may find sections in this template are irrelevant 
%%%%  to your work and are empowered to omit any such section if allowed by the 
%%%%  journal you intend to submit to. The submission guidelines and policies 
%%%%  of the journal take precedence. A detailed User Manual is available in the 
%%%%  template package for technical guidance.
%%%%%=============================================================================%%%%
%% New commands
%%%%%=============================================================================%%%%

\newcommand{\IhH}{I_h^H}
\newcommand{\IHh}{I_H^h}

\newcommand{\R}{\normalfont \mathrm{R}}

\newcommand{\A}{\normalfont \mathrm{A}}

\newcommand{\D}{\mathrm{D}}

\newcommand{\acronym}{\mathtt{FLEX-BC-PG}}
\newcommand{\RR}{\mathbb{R}}

\newcommand{\Hi}{\mathcal{H}}

\newcommand{\Id}{\mathrm{Id}}

\DeclareMathOperator*{\argmin}{arg\,min}
\DeclareMathOperator*{\Argmin}{Argmin}

\newcommand{\prox}{\normalfont \textrm{prox}}

\newcommand{\lp}{\normalfont \textrm{lp}}
\newcommand{\vertiii}[1]{{\left\vert\kern-0.25ex\left\vert\kern-0.25ex\left\vert #1 
    \right\vert\kern-0.25ex\right\vert\kern-0.25ex\right\vert}}
    
\definecolor{fuchsia}{rgb}{1.0, 0.0, 1.0}
%%%%%=============================================================================%%%%
%% as per the requirement new theorem styles can be included as shown below
\theoremstyle{thmstyleone}%
\newtheorem{theorem}{Theorem}[section]% meant for sectionwise numbers
%% optional argument [theorem] produces theorem numbering sequence instead of independent numbers for Proposition
\newtheorem{proposition}[theorem]{Proposition}% 

\theoremstyle{thmstyletwo}%
\newtheorem{remark}[theorem]{Remark}%
\newtheorem{lemma}[theorem]{Lemma}
\newtheorem{assumption}{Assumption}
\theoremstyle{thmstylethree}%
\newtheorem{definition}{Definition}%

\raggedbottom
%%\unnumbered% uncomment this for unnumbered level heads

\begin{document}

\title[A essentially cyclic multi-block-coordinate proximal gradient  algorithm for non-smooth and non-convex optimization.]{A flexible block-coordinate forward-backward algorithm for non-smooth and non-convex optimization.}

%%=============================================================%%
%% GivenName	-> \fnm{Joergen W.}
%% Particle	-> \spfx{van der} -> surname prefix
%% FamilyName	-> \sur{Ploeg}
%% Suffix	-> \sfx{IV}
%% \author*[1,2]{\fnm{Joergen W.} \spfx{van der} \sur{Ploeg} 
%%  \sfx{IV}}\email{iauthor@gmail.com}
%%=============================================================%%

\author[2]{\fnm{Luis} \sur{Briceño-Arias}}\email{luis.briceno@usm.cl}
\equalcont{These authors contributed equally to this work.}
\author[1]{\fnm{Paulo} \sur{Gonçalves}}\email{paulo.goncalves@inria.fr}
\equalcont{These authors contributed equally to this work.}
\author*[1]{\fnm{Guillaume} \sur{Lauga}}\email{guillaume.lauga@ens-lyon.fr}
\author[3]{\fnm{Nelly} \sur{Pustelnik}}\email{nelly.pustelnik@ens-lyon.fr}
\equalcont{These authors contributed equally to this work.}
\author[1]{\fnm{Elisa} \sur{Riccietti}}\email{elisa.riccietti@ens-lyon.fr}
\equalcont{These authors contributed equally to this work.}

\affil*[1]{\orgname{ENS de Lyon, CNRS, Université Claude Bernard Lyon 1, Inria, LIP, UMR 5668}, \orgaddress{69342, Lyon cedex 07, France}}

\affil[2]{\orgdiv{Departamento de Matemática}, \orgname{Universidad Técnica Federico Santa María}, \orgaddress{Santiago, Chile}}

\affil[3]{\orgname{Laboratoire de Physique, ENSL, CNRS UMR 5672}, \orgaddress{F-69342, Lyon, France}}

%%==================================%%
%% Sample for unstructured abstract %%
%%==================================%%

\abstract{Block coordinate descent (BCD) methods are prevalent in large scale optimization problems due to the low memory and computational costs per iteration, the predisposition to parallelization, and the ability to exploit the structure of the problem. The theoretical and practical performance of BCD relies heavily on the rules defining the choice of the blocks to be updated at each iteration. We propose a new deterministic BCD framework that allows for very flexible updates, while guaranteeing state-of-the-art convergence guarantees on non-smooth non-convex optimization problems. While encompassing several update rules from the literature, this framework allows for priority on updates of particular blocks and 
correlations in the block selection between iterations, which is not permitted under the classical convergent stochastic framework.
%, and allowing BCD methods to  mimic the behavior of multilevel algorithms. 
This flexibility is leveraged in the context of multilevel optimization algorithms and, in particular, in multilevel image restoration problems, where the efficiency of the approach is illustrated.
}

\keywords{Block-coordinate, Non-smooth, Non-convex, Forward-Backward, Multilevel algorithms }

%%\pacs[JEL Classification]{D8, H51}

%%\pacs[MSC Classification]{35A01, 65L10, 65L12, 65L20, 65L70}

\maketitle
%\tableofcontents

\section{Introduction}\label{sec1}
In this paper we introduce a new flexible block-coordinate algorithm to solve the
separable-structured optimization problem 
\begin{equation}
    \mathbf{\widehat{\mathbf{x}}} \in \Argmin_{\mathbf{x}=(x_1,\dots,x_{L}) \in \Hi} \Psi(\mathbf{x}):= f(\mathbf{x}) + \sum_{\ell = 1}^{L} g_\ell(x_\ell),
    \label{eq5:optim}
\end{equation}
where $\Hi$ is the direct sum of real separable, and finite dimensional Hilbert spaces
$(\Hi_\ell)_{1\le \ell\le L}$, 
$f:\Hi\to (-\infty,+\infty]$ is continuously differentiable, 
and, for every $\ell\in\{1,\ldots,L\}$, $g_\ell:\Hi_\ell \to (-\infty,+\infty]$ is proper and lower semicontinuous.
Without additional assumptions, the minimization problem can be non-smooth and non-convex. 
In image processing, for instance, $f$
usually encodes a fidelity with respect to some observation (e.g., a corrupted image) and functions 
$(g_\ell)_{1\le \ell\le L}$
encode some prior knowledge about components of the parameters to estimate (e.g., the regularity of an image).
In large-scale optimization, when the dimension of $\Hi$ is high, \textit{block-coordinate} (BC) methods are widely used for their low per-iteration computational and memory costs, and their ability to exploit problem separability. The main application of BC approaches is for parameter estimation (e.g., standard machine learning \cite{nesterov2012efficiency,wright2015coordinate,pmlr-v206-larsson23a,friedman2008sparse,xu2013block,xu2017globally,nutini2022let,nutini2015coordinate,salzo_parallel_2022} but also deep learning   \cite{pmlr-v97-zeng19a,gratton2024block,zhang2017convergent}). 
The theoretical convergence of block-coordinate methods depends on the choice of (i) the update rule of blocks $x_1,\ldots, x_L$ and (ii) the function to be minimized for each block.  The update rule can be of two types: stochastic or deterministic.  The function to be minimized for each block can either be $\Psi$ or a linearized version of $\Psi$. The first case  is referred as \textit{Block-Coordinate Descent}  (see, e.g., \cite{Hong17detconv}), which includes the   Gauss-Seidel approach (see \cite{LuoTseng1993} and references therein),  while the second case is \textit{Block-Coordinate Proximal Gradient} (BC-PG), whose particular case when $g_\ell\equiv 0$ reduces to  \textit{Block-Coordinate Gradient Descent}.

In \cite{LuoTseng1993}, the linear convergence of the Block-Coordinate Descent method is proved under the strong convexity of $\Psi$ w.r.t. each block.   However, this approach has two drawbacks: for each block the minimization procedure can be as complex as solving \eqref{eq5:optim} and without strong convexity assumption there is no theoretical guarantees of its convergence \cite{Powell1973}. For these reasons, we focus in this paper on the BC-PG, which offers efficient update of each block due to  linearization and convergence guarantees in a larger setting.

\paragraph{State-of-the-art on BC-PG.}
There exists a wide literature using stochastic activation of blocks in the convex and non-convex setting, see, e.g., \cite{salzo_parallel_2022,briceno2022random,combettes2015stochastic,lin2015accelerated,richtarik2014iteration,richtarik2016parallel,fercoq2015,wright2015coordinate,cadoni2016block,namkoong2017adaptive,lee2019random,sun2021worst,nesterov2012efficiency,Patrascu2015Efficient}. Most of the literature  on stochastic approaches %only 
studies convergence or rate of convergence of the objective function values in expectation \cite{richtarik2014iteration,richtarik2016parallel,fercoq2015,wright2015coordinate}. In \cite{combettes2015stochastic}, the almost sure convergence of the random iterates is proved by using the concept of stochastic Quasi-Féjer sequence, first introduced in \cite{Ermolev1971}. This framework is powerful and can be applied to many types of block-coordinate algorithms (see for instance primal-dual ones in \cite{Alacaoglu2022convergence,briceno2022random, Chambolle2024_SPHDHG}). However, it is thus  far only applicable if $\Psi$ is convex. \ 

The main limitation of stochastic approaches is the random  block selection, which does not offer the flexibility to prescribe the order of block activation, even though it allows for parallel activations. Moreover, a comparison between random and deterministic block activation strategies in the strongly convex setting when $g_{\ell}\equiv 0$ is presented in \cite{nutini2015coordinate}, where the deterministic Gauss-Southwell rule, i.e.,  the strategy that updates at each iteration the block with the largest partial gradient, is shown to be preferable in practice.

Deterministic block-activation algorithms appear in a wide  literature that covers both convex and non convex frameworks, with convergence supported by  theoretical arguments, different from those used in the stochastic setting. %
Two main types of  deterministic rules exist: cyclic and essentially cyclic.%, and their convergence proofs differ from each other.
Cyclic refers to the sequential update of one block after the other until every one of them has been updated once. These updates encompass alternated linearized optimization techniques \cite{bolte2014proximal}. This approach does not permit to update the same component twice in the same cycle.
%Another class of deterministic block update is the 
\textit{Essentially cyclic} rules on the other hand, also called $K$-cyclic \cite{Combettes1997} for $K\geq1$, update  every block at least once in any $K$ consecutive iterations.  %
This rule allows for the activation of a single block multiple times in a same cycle of $K$ iterations.  %

Under \emph{convexity assumptions}, some convergence results are available for cyclic BCD in \cite{LuoTseng1992, Combettes1997, MokhtariGurbuzbalabanRibeiro2018}. These results either cover a different framework than ours (i.e., \eqref{eq5:optim}) or a more restricted one. 
More precisely, when $g_\ell$ is the indicator function of an interval and $f$ is coordinatewise strongly convex, the linear convergence of the  BC-PG with cyclic updates is proved in \cite{LuoTseng1992}. In \cite{Combettes1997}, the convergence of a convex feasibility problem with several updating rules involving cyclic and essentially cyclic approaches is obtained. In \cite{LatafatThemelisPatrinos2022}, linear convergence rates of the functional values and iterates are derived under strong convexity assumptions and when the separability is in the smooth component. A similar strategy is used in \cite{MokhtariGurbuzbalabanRibeiro2018} for minimizing the average of a finite number of strongly convex functions.

BC-PG with Gauss-Southwell rule also belongs to the class of deterministic block-activation algorithms but to the best of our knowledge the results are based on convergence of the values of the function and not of the iterates \cite{nutini2015coordinate,Friedlander2020}.

 Guarantees of convergence of cyclic/essentially cyclic BCD have been investigated in the \emph{nonconvex setting} for instance in \cite{bolte2014proximal,chouzenoux2016block,xu2017globally} under Kurdyka-Łojasiewicz (KŁ) or Łojasiewicz properties/inequalities \cite{attouch2009convergence,attouch2010proximal,bolte2010characterizations,bolte2014proximal}. The authors proved the convergence to a minimizer or a critical point of $\Psi$, and derived the rate of convergence of the sequence of iterates under specific assumptions on the desingularizing function. Among the existing literature, the work most closely related to ours is \cite{chouzenoux2016block}, where the authors examine convergence analysis in the essentially cyclic case. However, their study focuses on updating a single block at each iteration, which does not encompass the parallel essentially cyclic framework that allows for the simultaneous updating of multiple blocks. %

\paragraph{BC-PG approaches for image reconstruction.} 

If BC-PG approaches are widely used in hyperparameter estimation (e.g., machine learning \cite{nesterov2012efficiency,wright2015coordinate,pmlr-v206-larsson23a,friedman2008sparse,xu2013block,xu2017globally,nutini2022let,nutini2015coordinate,salzo_parallel_2022} and deep learning   \cite{pmlr-v97-zeng19a,gratton2024block,zhang2017convergent}),  its application is less straightforward in image reconstruction. Indeed, the image reconstruction is modeled by a minimization problem of the form \eqref{eq5:optim}, whose global structure is not handled well by BC-PG methods that operate through local (patch-wise or pixel-wise) updates \cite{wright2015coordinate}. BC-PG is more effective for images dominated by local information, such as astronomical images, which are composed mostly of a black background with sparse point sources relative to the image size \cite{sun_block_2019}.
For more general images, the structure of the optimization problem can be exploited to design BC-PG schemes. For instance in \cite{pascal_block-coordinate_2018} the authors divide the image in lattices in order to benefit from a reduced size allowing for faster convergence. These lattices are inherited from the structure of the total variation regularization, which penalizes the difference of the value of a given pixel with its neighbor on the right, and its neighbor below. Therefore, grouping pixels by selecting one every other two rows and every other two columns define four independent groups of pixels that cover the entirety of the image. %
In \cite{chouzenoux2016block}, blocks are defined by combining wavelet coefficients locally in an overcomplete dictionary. %

In image processing, block strategies have mainly been implemented to manage large volumes of data. However, they do not necessarily accelerate the solution process for a given data volume. For example, the forward-backward algorithm may remain faster than the BC-PG algorithm if all the data is accessible (see, e.g., \cite{onose2016scalable}). 
\paragraph{Multilevel algorithms for image reconstruction.} There exists another class of algorithms that exploit the structure of the minimization problem and have demonstrated their effectiveness on imaging problems: multilevel algorithms \cite{parpas2017,javaherian2017,hovhannisyan2019fast,fung2020,buccini2020multigrid,plier2021}. Such procedures can speed-up algorithms in the convex framework \cite{lauga2022fista,lauga_iml_2023,lauga2024radio} but also converge towards a better solution in the non-convex setting \cite{refNils,calandra2021high}. A multilevel algorithm tackles high dimensional optimization problems by defining a hierarchy of smaller dimensional approximations of the original problem and by alternating optimization steps on this hierarchy. This approach, similarly to BC-PG, relies on a problem decomposition to perform iterations at lower complexity.  %

As compared to BC-PG,  multilevel methods are much faster %
for imaging problems \cite{lauga_iml_2023}.  However, the notion of coherence between approximations is at the core of the multilevel strategy and is not used for BC-PG algorithms \cite{lauga_iml_2023}. Finally, the convergence for multilevel methods is limited to the convex possibly non-smooth setting or the non-convex smooth setting, while the convergence of BC-PG methods is established both in the convex and the non-convex setting, without smoothness assumptions.

\paragraph{Contribution: Flexible block-coordinate proximal gradient approach.} 

We propose a new BC-PG algorithm that relies on a parallel and essentially cyclic rule, which enables us to fully exploit the %hierarchical 
structure of the optimization problem, whenever it exists. We will refer to it as $\acronym$, which stands for \textit{Flexible BC-PG}.

$\acronym$ can trigger parallel activation of the blocks. Such parallel updates allow for instance to update blocks that the structure of the problem groups together in their contribution to the objective function %
(like with Total Variation in \cite{pascal_block-coordinate_2018}) or share a position in a hierarchy (such as the detail coefficients of a wavelet transform \cite{mallat1999wavelet}). For instance, with respect to the existing cyclic update framework, $\acronym$, allows for varying the size of the blocks along the iterations, which is of interest since using larger block size is beneficial to speed up the optimization \cite{nutini2022let}, but may be more costly. Alternating between the two thus allows to benefit from the advantages of both. %

Our first contribution is to prove that the convergence of $\acronym$ to a critical point of the optimization problem is guaranteed. To the best of our knowledge, no BC-PG algorithm was shown to converge to a critical point of a non-smooth and non-convex optimization problem while allowing parallel updates in a non-stochastic setting. %
Our $\acronym$ algorithm is thus the first that has this property, while having state-of-the-art convergence guarantees (i.e., decrease of objective function values and convergence to a critical point in a non-convex setting)%
We display in Figure \ref{fig:updates_scheme} some of the update schemes covered by our framework. 

\begin{figure*}
    \centering
        \begin{subfigure}[b]{\textwidth}
        \renewcommand{\arraystretch}{0.5}
        \setlength{\tabcolsep}{1pt}
        \begin{tabular}{cc}
          Cyclic   & Cyclic reshuffled  \\
          \includegraphics[trim={21em 16em 25em 20em},clip,width=0.5\textwidth]{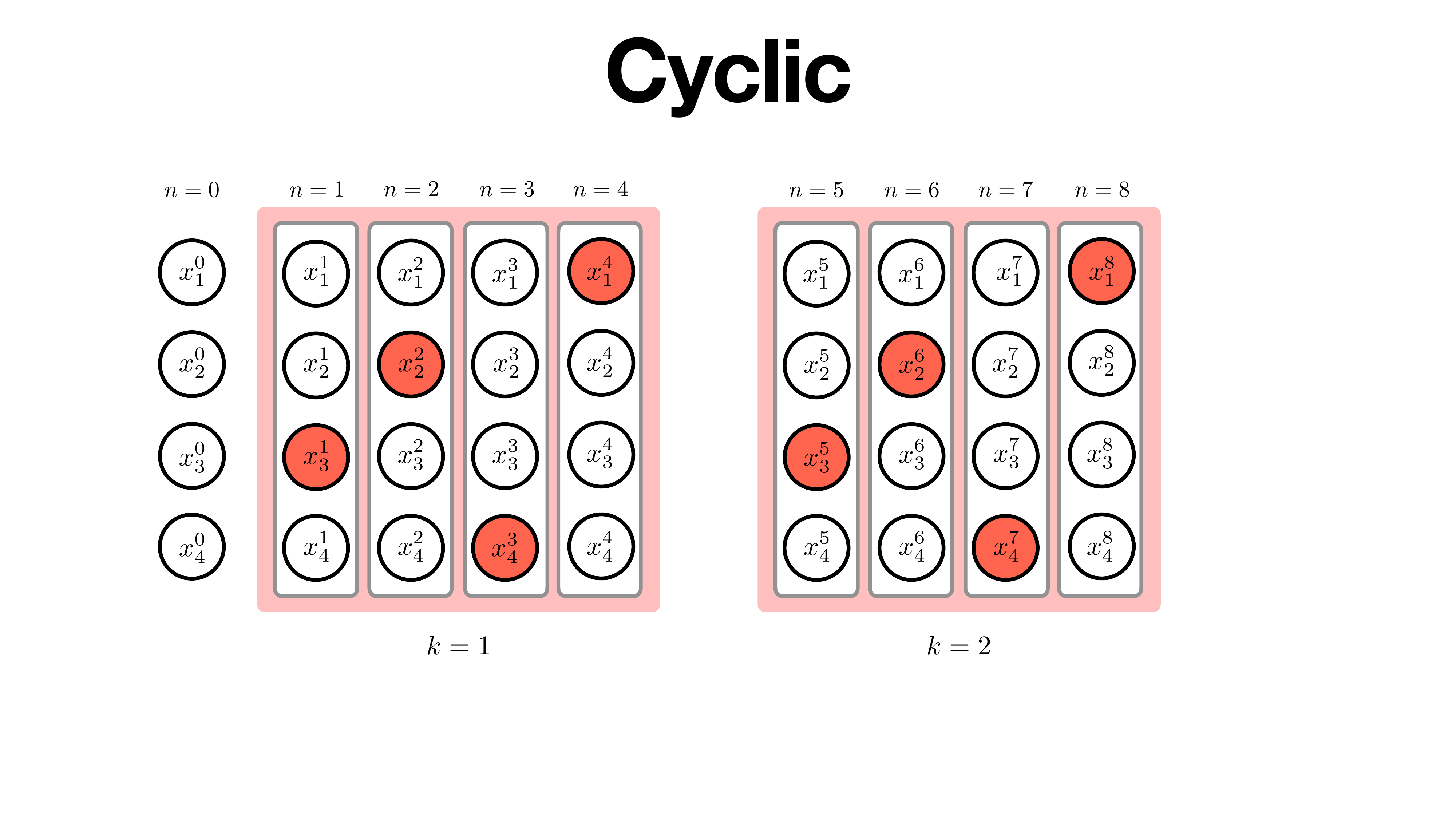}   & \includegraphics[trim={21em 16em 25em 20em},clip,width=0.5\textwidth]{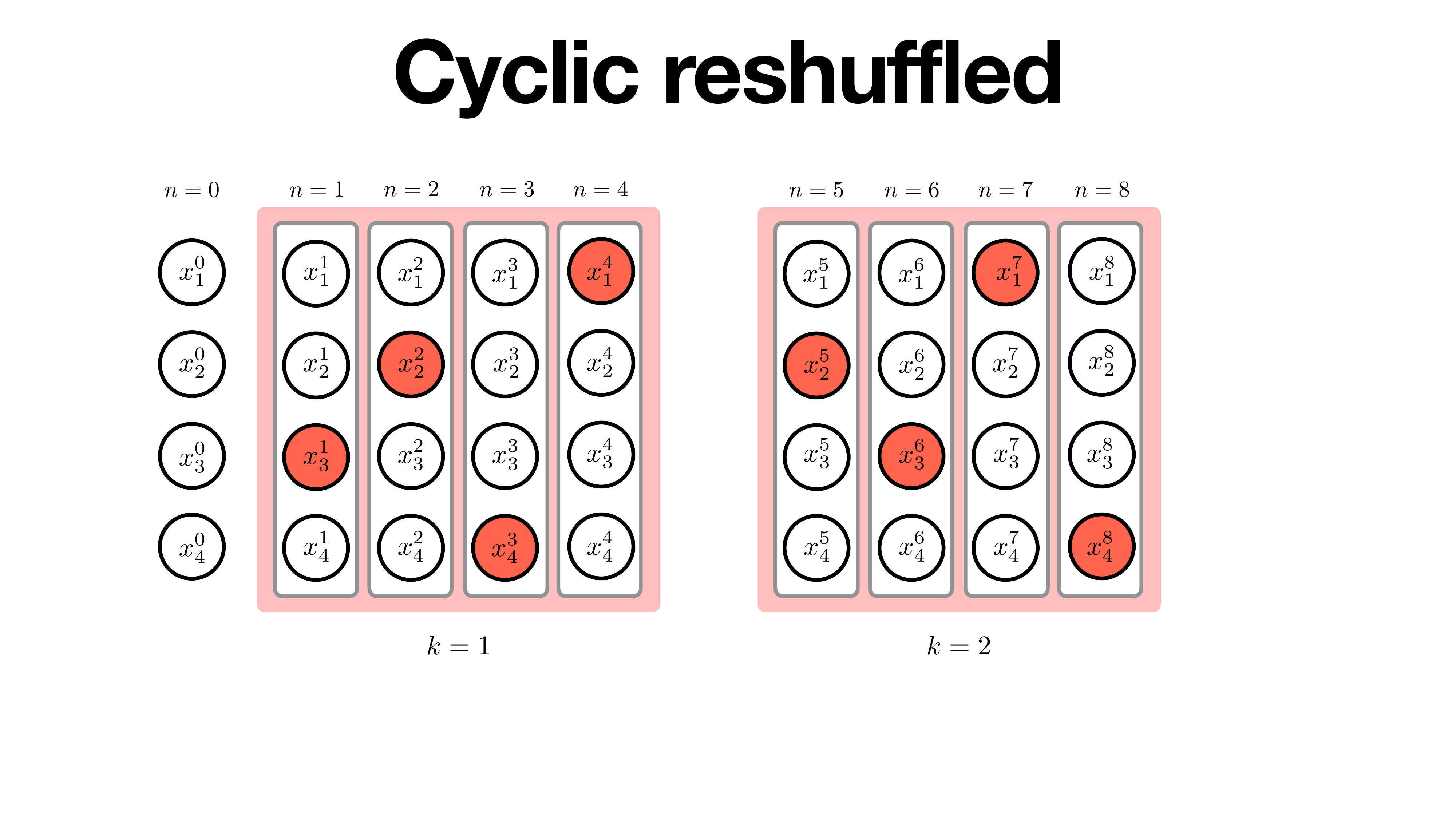} \\
        \end{tabular}
        \caption{Existing update rules}
    \end{subfigure}
    \vspace{2em}
        \begin{subfigure}[b]{\textwidth}
 \renewcommand{\arraystretch}{0.5}
        \setlength{\tabcolsep}{1pt}
        \begin{tabular}{cc}
          Parallel \& essentially cyclic & Hierarchical \\
          \includegraphics[trim={21em 16em 25em 20em},clip,width=0.5\textwidth]{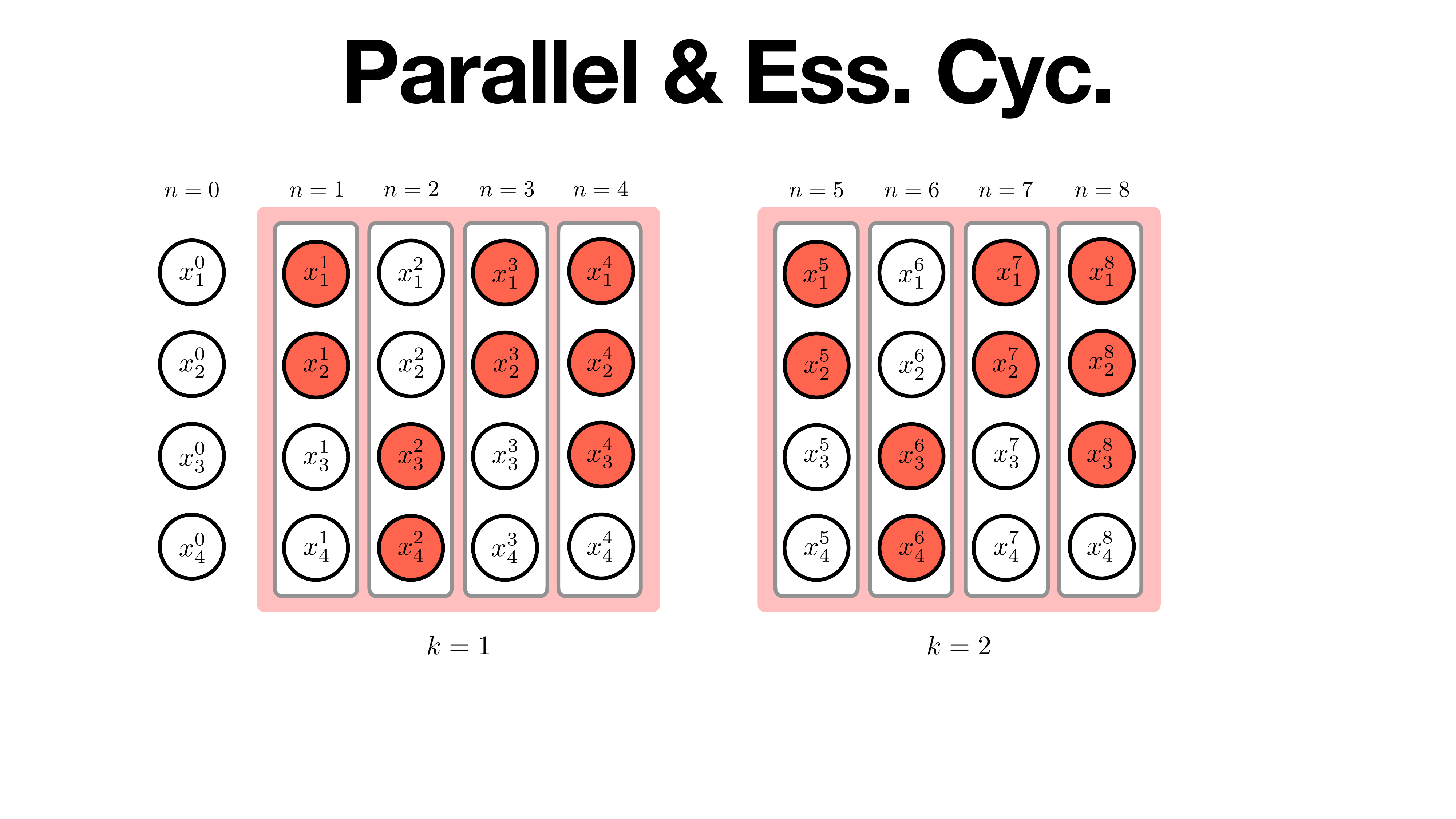}   & \includegraphics[trim={21em 16em 25em 20em},clip,width=0.5\textwidth]{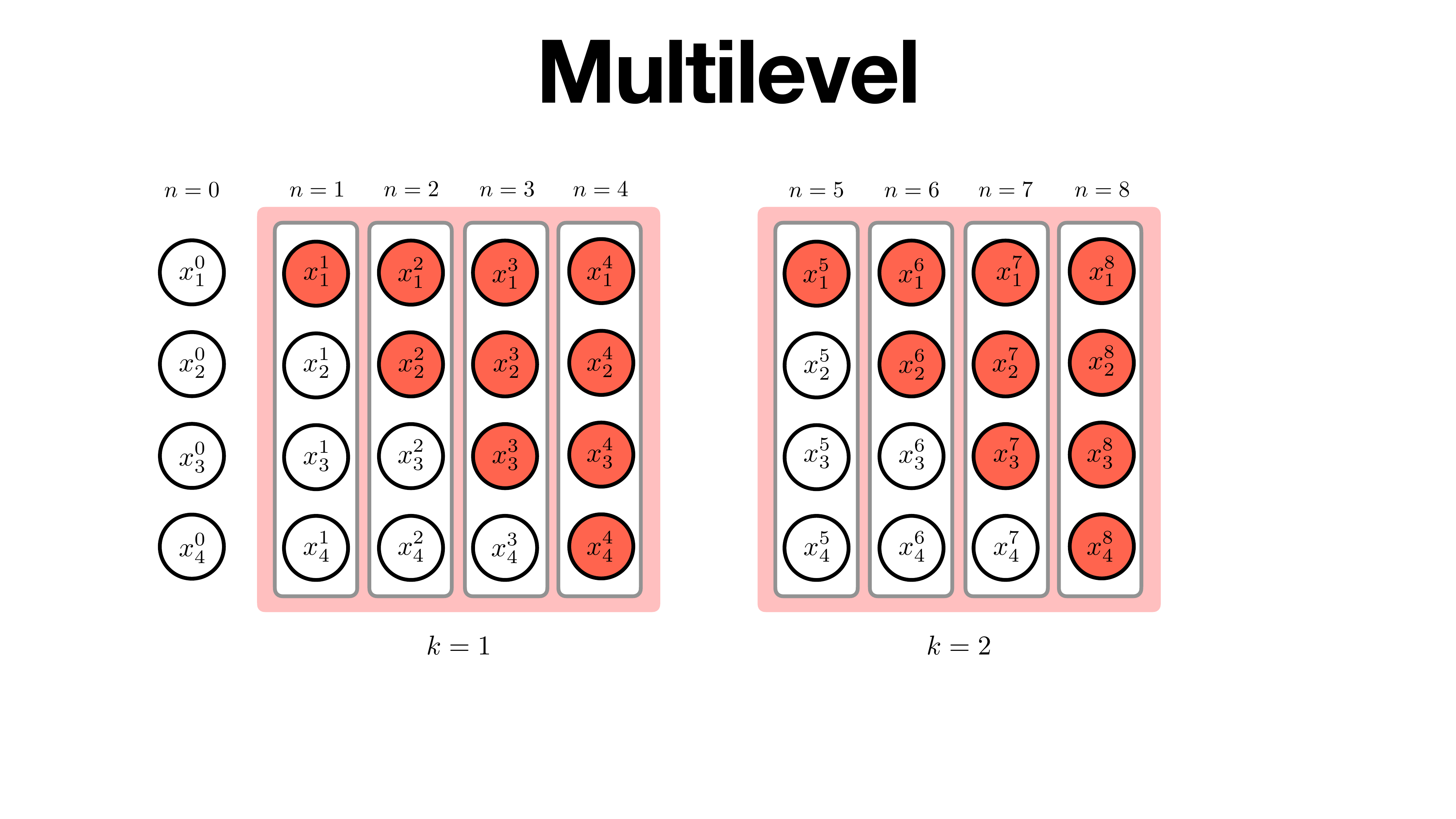} 
        \end{tabular}
        \caption{New update rules}
\end{subfigure}
    \caption{%
    \textbf{Examples of update rules that are covered by our proposed algorithm $\acronym$ for a problem with $4$ blocks}. The top row displays existing rules that are covered by our framework, the bottom displays new rules that are now covered by our framework. %
    The variable of the $\ell$-th block at iteration $n$ is indexed as $x_\ell^n$. At the top of each scheme, we display the iteration number $n$, and at the bottom the cycle number $k$. At each iteration we highlight in red the blocks that are updated. Thus, each column depicts the activation of the blocks at a given iteration. All these rules share a common feature, necessary to make the corresponding BCD method convergent: each block must have been updated at least once during a cycle. Here, each cycle contains $4$ iterations, but it is not necessary for the number of iterations per cycle to be equal to the number of blocks. %
    }
    \label{fig:updates_scheme}
\end{figure*}

Our second contribution is to study the connection between $\acronym$ and multilevel algorithms. %
We show that, for a specific image restoration problem that has a hierarchical structure (cf. \eqref{eq5:wavelet_optim_exp}), the proposed update can encompass a hierarchical  choice of the blocks, mimicking multilevel iterations. 
In particular, we prove that  the first-order coherence, usually imposed in multilevel methods, is actually essential for establishing the equivalence of the two methods. As a result, just like multilevel methods, the $\acronym$ algorithm we propose is faster than the classical forward-backward (FB) in practice. 
Moreover, thanks to this connection, the theory presented in this article can be seen as an extension of the convergence theory of multilevel FB \cite{lauga_iml_2023} to a non-convex setting. %

\paragraph{Outline.} 
This article is organized as follows. In Section \ref{sec:BCD}, we present the state-of-the art BCD strategies to tackle Problem \eqref{eq5:optim} and the proposed $\acronym$ algorithm. In Section \ref{sec5:convergence_result}, we prove the convergence of $\acronym$ in a non-smooth and non-convex setting, where we assume that $\Psi$ satisfies the Kurdyka-Łojasiewicz inequality.  %
In Section \ref{sec5:motivating_example} we show, for a particular instance of problem \eqref{eq5:optim}, that a multilevel forward-backward algorithm can be seen as an instance of the $\acronym$ algorithm with a hierarchical update rule. Finally in Section \ref{sec:numerical}, we compare several instances of our $\acronym$ algorithm to BC-PG strategies encountered in the literature.

\medskip

\paragraph{Notations.} We introduce the notations that will be used in the following. We use bold letters to consider full vectors of variables ($\mathbf{x}$) and plain letters indexed by $\ell$ to denote blocks ($x_\ell$). 
We denote by $\vertiii{\cdot}$ the Euclidean norm on $\Hi$ and by $\Vert \cdot \Vert$ the Euclidean norm on the $L$ spaces $(\Hi_\ell)_{1\leq \ell \leq L}$. Similarly, the scalar product on $\Hi$ will be denoted by $\langle \langle \cdot,\cdot \rangle \rangle$ and the scalar product on $\Hi_\ell$ by  $\langle \cdot,\cdot \rangle$; the potential ambiguity between two spaces is cleared up as the variables on which the scalar product is applied will be indexed by $\ell$. Note that for all $\mathbf{x}$ and $\mathbf{y}$ in $\Hi$, $\langle \langle \mathbf{x},\mathbf{y} \rangle \rangle = \sum_{\ell=1}^L \langle x_\ell,y_\ell \rangle$. For a continuously differentiable function $f$, $\nabla_\ell f(\mathbf{x})$ is the gradient of $f$ with respect to the variables in the $\ell$-th block at some $\mathbf{x} \in \Hi$.  Recall that, for a function $g$ that is proper and lower semicontinuous with $\inf_\Hi g>-\infty$, the proximity operator of $g$ of parameter $\tau >0$ is defined as 
\begin{equation}\label{eq:prox_equation}
    (\forall \mathbf{x} \in \Hi), \quad \prox_{\tau g}(\mathbf{x}) = \Argmin_{\mathbf{u} \in \Hi} \frac{1}{2\tau} \Vert \mathbf{u}-\mathbf{x} \Vert^2 + g(\mathbf{u}).
\end{equation}
where $\Argmin_{u \in \Hi} h(\mathbf{u})$ denotes the set of minimizers of some function $h : \Hi \to (-\infty,+\infty]$. Given $(\mathbf{x},\mathbf{u}) \in \Hi \times \Hi$, we have \cite{bolte2014proximal}
\begin{equation}\label{eq:subgradient_prox}
    \mathbf{u} \in \prox_g(\mathbf{x}) \implies \mathbf x - \mathbf u \in \partial g(\mathbf u).
\end{equation}
We will say that a function $g$ is proximable when $\prox_{\tau g}$ is known under closed form.
\section{Block-coordinate forward-backward algorithm} \label{sec:BCD}
The idea of splitting an optimization problem into smaller tasks is ubiquitous in practice and has sparked in the last twenty years a lot of research to better understand its potential from a theoretical perspective. The following paragraphs describe the bulk of these studies in the context of the BC-PG algorithm, where block updates are done using proximal-gradient descent to handle the non-smoothness of $g:=\sum_{\ell=1}^Lg_\ell$. %
A complete overview of the update methods for BC-PG algorithm may be found in \cite{wright2015coordinate,nutini2022let}. %
\subsection{State-of-the-art strategies}
The most standard formulation of block-coordinate forward-backward algorithm is the following.   We index the  sequence of iterates by a superscript denoting the iteration number and a subscript denoting the block of variables. Thus, $x_\ell^n$ denotes the $\ell$-th block at the $n$-th iteration. For convenience, we write $\mathbf{x}^n$ to denote the full variable at iteration $n$, so that $\mathbf{x}^n = (x_1^{n},\ldots,x_L^{n})$. We denote $(\boldsymbol{\varepsilon}^n)_{n\in \mathbb{N}} = (\varepsilon_1^{n},\ldots,\varepsilon_L^{n})_{n\in \mathbb{N}}$ a sequence of variables with value in $\{0,1\}^L$ and the step-sizes $(\tau_\ell^n)_{1\leq \ell \leq L} \in \RR^{L}_{++}$, for all $n \in \mathbb{N}$. The algorithm is initialized with $\mathbf{x}^0 = (x_1^{0},\ldots,x_L^{0}) \in $ dom $g$ and reads  
\begin{equation}\label{eq5:BC_FB}
\begin{array}{l}
\text{for } n=0,1,\dots \\ 
\left\lfloor
\begin{array}{l}
\text{for } \ell=1,\ldots,L \\
\left\lfloor 
\begin{array}{l}
    x_\ell^{n+1} \in x_\ell^n + \varepsilon_\ell^n \left( \prox_{\tau_\ell^n g_\ell} \left(x_\ell^n - \tau_\ell^n \nabla_\ell f(\mathbf{x}^n) \right) - x_\ell^n\right).
\end{array}
\right.
\end{array}
\right.
\end{array}
\end{equation}
\medskip
\noindent The settings investigated in the literature are the following ones:
\begin{enumerate}
    \item Stochastic setting:  $(\varepsilon_1^{n},\ldots,\varepsilon_L^{n}) \in \{0,1\}^L$ are chosen randomly, enabling random parallel updates, for all $n\in\mathbb{N}$. 
    \item Essentially cyclic setting: given $K>0$,
    \begin{equation*}
            (\forall j \in \mathbb{N}), \quad \bigcup_{n=j}^{j+K-1} \{\ell\mid \varepsilon_\ell^j=1\} = \{1,\ldots,L\}.
    \end{equation*}
\end{enumerate}
There have been numerous works to study the algorithm in \eqref{eq5:BC_FB} 
in the stochastic setting \cite{salzo_parallel_2022,briceno2022random,combettes2015stochastic,richtarik2014iteration,richtarik2016parallel,fercoq2015,wright2015coordinate,cadoni2016block,namkoong2017adaptive,lee2019random,sun2021worst}, in the essentially cyclic one \cite{ortega2000iterative,tseng2001convergence,wright2015coordinate,nutini2015coordinate,nesterov2012efficiency,chouzenoux2016block,bolte2014proximal,abboud2019alternating,xu2017globally} and finally using greedy rules that are efficient in practice but lack the guarantees of the previous two.
This list of references is not exhaustive, but it is representative of the proof techniques used to study the convergence of algorithm \eqref{eq5:BC_FB}.

\subsection{Proposed setting} %

As highlighted in the introduction, despite the huge literature on block coordinate methods, no algorithm allowing a deterministic control of parallel updates with convergence guarantees has yet to be proposed. 

We are also motivated to draw connections %
between multilevel algorithms and BCD algorithms, and the method we propose can benefit in the relevant setting (see Section \ref{sec:numerical}) from the efficiency of multilevel algorithms and the convergence guarantees of the BCD formalism.
Specifically, we design a convergent block-coordinate descent algorithm for non-smooth and non-convex optimization where the updates are potentially parallel, essentially cyclic, and may be randomly reshuffled at each cycle. %
Our framework encompasses, for instance, BCD algorithms where the size of the blocks may vary from one iteration to the other \cite{nutini2022let} or hierarchical BCD, whose update rule mimics the behavior of multilevel algorithms (i.e., some blocks, deemed more important, are updated more often than others, see Figure \ref{fig:updates_scheme} or Section \ref{sec5:motivating_example}). This requires to allow correlated block selection, which is not permitted under the classical convergent stochastic framework \cite{combettes2015stochastic}. Our major contribution is therefore the convergence of the method both in function values and with respect to the set of critical points.
\medskip

\paragraph{$\acronym$ iterations}
Our update rule is \emph{parallel and essentially cyclic} by setting \textit{a priori} the sequence $\boldsymbol{\varepsilon}^n = (\varepsilon_1^{n},\ldots,\varepsilon_L^{n})\in\{0,1\}^L$ for all $n\in \mathbb{N}$. %  
To simplify the following analysis, we rewrite algorithm \eqref{eq5:BC_FB} to explicitly incorporate  the cycles. Let $K\in\mathbb{N}\backslash\{0\}$ be the number of iterations to complete one cycle. % 
Let $(\tau_\ell^n)_{1\leq \ell \leq L} \in \RR^{L}_{++}$ for all $n\in \mathbb{N}$, and $\mathbf{\bar{x}}^0=\mathbf{x}^0 = (x_1^{0},\ldots,x_L^{0}) \in $ dom $g$. Set $k=0$. The iterations read% 
\begin{equation}\label{eq5:H_BC_FB}
\begin{array}{l}
\text{for } n=0,1,\dots \\ 
\left\lfloor
\begin{array}{l}
\text{for } \ell=1,\ldots,L \\
\left\lfloor 
\begin{array}{l}
    x_\ell^{n+1} = x_\ell^n + \varepsilon_\ell^n \left( \prox_{\tau_\ell^n g_\ell} \left(x_\ell^n - \tau_\ell^n \nabla_\ell f(\mathbf{x}^n) \right) - x_\ell^n\right).
\end{array}
\right. \\
\text{if } n+1 \equiv 0~[K]\\
\left\lfloor 
\begin{array}{l}
    k = k+1 \\
    \mathbf{\bar{x}}^{k} = \mathbf{x}^{n+1}
\end{array}
\right.
\end{array} 
\right.
\end{array}
\end{equation}

The convergence analysis relies on a cyclic rule for the updates, % 
and we assume that each cycle consists of at most $K$ iterations. % 
We will denote $\mathbf{\bar{x}}^k$ the iterates obtained after $k$ cycles and thus $k\cdot K$ iterations to accentuate the difference with the iterates $\mathbf{x}^n$. We will prove the convergence of $(\mathbf{\bar{x}}^k)_{k\in\mathbb{N}}$ to a critical point, which will give us, as a byproduct, the convergence of $\mathbf{x}^n$ to the same critical point. Here, $\mathbf{\bar{x}}^k = \mathbf{x}^{n}$ when $n=k \cdot K$. These notations are illustrated in Figure \ref{fig:updates_scheme}.% 

\section{Convergence of the proposed BC-PG algorithm} \label{sec5:convergence_result}

We now analyse the convergence of the proposed scheme using the Kurdyka-Łojasiewicz property of $\Psi$, which allows us to provide both convergence of the objective function values and of the iterates to a critical point of $\Psi$. 
For completeness of the argument, we prove the convergence of a stochastic version of our $\acronym$  algorithm in Appendix \ref{secA:stochastic}.%

\subsection{Preliminaries in non-convex optimization}

In the non-convex setting, we will need an appropriate notion of subgradient. For the rest of the paper, we assume that $\Hi := \RR^N$. \medskip
\begin{definition}{\textbf{Subdifferential \cite{VarAnalRockafellar}.}}
    Let $\Psi:\Hi \to \RR$, and let $\mathbf{x}\in\Hi$. The Fréchet subdifferential of $\Psi$ at $\mathbf{x}$ is denoted by $\hat{\partial} \Psi(\mathbf{x})$ and is given by the set
    \begin{align*}
        \hat{\partial} \Psi(\mathbf{x}) = \left\{ \hat{\mathbf{s}} \in \Hi \;|\; \lim_{\mathbf{y}\rightarrow \mathbf{x}} \inf_{\mathbf{y} \neq \mathbf{x}} \frac{1}{\vertiii{\mathbf{x}-\mathbf{y}}} \left(\Psi(\mathbf{y})-\Psi(\mathbf{x})-\langle\langle \mathbf{y}-\mathbf{x}, \hat{\mathbf{s}} \rangle\rangle \right) \geq 0 \right\}.
    \end{align*}
    If $\mathbf{x} \notin \textrm{dom } \Psi$, then $\hat{\partial} \Psi(\mathbf{x}) = \emptyset$.
    The limiting subdifferential of $\Psi$ at $\mathbf{x}$ is denoted by $\partial \Psi(\mathbf{x})$ and is given by 
    \begin{align*}
\partial \Psi(\mathbf{x}) = \big\{ \mathbf{s} \in \Hi \;|\; \exists & \left( \mathbf{x}^k,\hat{\mathbf{s}}^k \right) \overset{k\rightarrow\infty}{\rightarrow} \left(\mathbf{x},\mathbf{s}\right) \\ & \textrm{ such that } \Psi(\mathbf{x}^k) \overset{k\rightarrow\infty}{\rightarrow} \Psi(\mathbf{x}) \textrm{ and } (\forall k \in \mathbb{N}) ~\hat{\mathbf{s}}^k \in \hat{\partial} \Psi(\mathbf{x}^k) \big\}.
    \end{align*}
Recall that if $\Psi$ is convex, its subdifferential is given, for all $\mathbf{x} \in \Hi$, by
\begin{equation*}
%\label{eq5:subdif_convex}
    \partial \Psi(\mathbf{x}) = \{\mathbf{s} \in \Hi \;|\; \Psi(\mathbf{x}) + \langle\langle \mathbf{s}, \mathbf{y}-\mathbf{x} \rangle\rangle \leq \Psi(\mathbf{y}), \forall \mathbf{y} \in \Hi \}.
\end{equation*}
Both $\hat{\partial} \Psi(x)$ and $\partial \Psi(x)$ are closed sets \cite[Theorem 8.6]{VarAnalRockafellar}.
\end{definition}

\vspace{0.3cm}
The limiting subdifferential benefits from the following separability property:
\begin{proposition}\label{prop:sub}{\textbf{Subdifferentiability property \cite{VarAnalRockafellar}.}} Let $\Psi$ be defined as in  problem \eqref{eq5:optim}. Then, for all $x = (x_1,\ldots,x_L)\in \Hi_1 \times \ldots \times\Hi_L$, we have
    \begin{equation*}
        \partial \Psi(\mathbf{x}) = (\nabla_1 f(\mathbf{x}) + \partial g_1(x_1)) \times \ldots \times (\nabla_L f(\mathbf{x}) +\partial g_L(x_L)).
    \end{equation*}
    \end{proposition}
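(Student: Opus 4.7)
The plan is to decompose the identity into two classical facts: first, an exact sum rule for the limiting subdifferential when one summand is $C^1$, and second, a Cartesian-product formula for the limiting subdifferential of a fully separable lower semicontinuous function. Combining them with the block decomposition of $\nabla f$ will yield the announced formula.

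For the first step, I would apply the exact calculus rule (\cite{VarAnalRockafellar}, Exercise 8.8(c)): since $f$ is continuously differentiable on $\Hi$ and $g(\mathbf{x}) := \sum_{\ell=1}^L g_\ell(x_\ell)$ is proper and lower semicontinuous as a sum of proper lsc functions on disjoint coordinates, one has
\begin{equation*}
\partial \Psi(\mathbf{x}) = \nabla f(\mathbf{x}) + \partial g(\mathbf{x}),
\end{equation*}
together with the block decomposition $\nabla f(\mathbf{x}) = (\nabla_1 f(\mathbf{x}), \ldots, \nabla_L f(\mathbf{x}))$ inherited from $\Hi = \Hi_1 \oplus \cdots \oplus \Hi_L$. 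So it only remains to prove that $\partial g(\mathbf{x}) = \partial g_1(x_1) \times \cdots \times \partial g_L(x_L)$.

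For the second step, I would first establish the corresponding identity for the Fréchet subdifferential, namely $\hat{\partial} g(\mathbf{x}) = \hat{\partial} g_1(x_1) \times \cdots \times \hat{\partial} g_L(x_L)$. The inclusion $\supseteq$ follows by adding the $L$ defining inequalities of $\hat{\partial} g_\ell(x_\ell)$ and using that the sum of $o(\|y_\ell - x_\ell\|)$ terms is $o(\vertiii{\mathbf{y}-\mathbf{x}})$. The reverse inclusion $\subseteq$ is obtained by specializing the defining inequality of $\hat{\partial} g(\mathbf{x})$ to test points $\mathbf{y}$ that vary only in the $\ell$-th block: since the other blocks are frozen at $x_j$, the quotient in the definition reduces exactly to the one defining $\hat{\partial} g_\ell(x_\ell)$, which forces $\hat{s}_\ell \in \hat{\partial} g_\ell(x_\ell)$.

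To transfer the identity to the limiting subdifferential, I would use the sequential characterization: $\mathbf{s} \in \partial g(\mathbf{x})$ iff there exist $\mathbf{x}^k \to \mathbf{x}$ with $g(\mathbf{x}^k) \to g(\mathbf{x})$ and $\hat{\mathbf{s}}^k \in \hat{\partial} g(\mathbf{x}^k)$ with $\hat{\mathbf{s}}^k \to \mathbf{s}$. The step I expect to require the most care is showing that the convergence $g(\mathbf{x}^k) \to g(\mathbf{x})$ transfers coordinatewise: by lower semicontinuity of each $g_\ell$ one has $\liminf_k g_\ell(x_\ell^k) \geq g_\ell(x_\ell)$, and summing these inequalities together with $\sum_\ell g_\ell(x_\ell^k) \to \sum_\ell g_\ell(x_\ell)$ forces equality in the limit for every block, hence $g_\ell(x_\ell^k) \to g_\ell(x_\ell)$. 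Combined with the Fréchet-level separability, this yields $\partial g(\mathbf{x}) \subseteq \partial g_1(x_1) \times \cdots \times \partial g_L(x_L)$. The reverse inclusion is immediate by concatenating, block by block, approximating sequences for each $s_\ell \in \partial g_\ell(x_\ell)$ while leaving the other coordinates frozen at $x_j$. Substituting into the sum-rule identity from the first step then yields the announced product formula for $\partial \Psi(\mathbf{x})$.
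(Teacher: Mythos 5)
Your proposal is correct, but note that the paper does not prove this proposition at all: it is stated with a citation to Rockafellar--Wets, where it amounts to the combination of the exact sum rule for a $C^1$ function plus a proper lsc function (Exercise 8.8(c)) and the product formula for the subdifferential of a fully separable function (Proposition 10.5). Your proof is precisely a self-contained derivation of those two cited facts, and both halves are sound: the Fr\'echet-level separability is handled correctly (the one-sided $o(\Vert y_\ell - x_\ell\Vert)$ estimates combine into an $o(\vertiii{\mathbf{y}-\mathbf{x}})$ bound because $\Vert y_\ell - x_\ell\Vert \le \vertiii{\mathbf{y}-\mathbf{x}}$), and the key observation in the passage to the limiting subdifferential --- that $g(\mathbf{x}^k)\to g(\mathbf{x})$ together with lower semicontinuity of each $g_\ell$ forces $g_\ell(x_\ell^k)\to g_\ell(x_\ell)$ blockwise --- is exactly the point where separability of the limiting subdifferential could otherwise fail, so it is good that you isolated it. One small imprecision: in the reverse inclusion for $\partial g$ you speak of ``leaving the other coordinates frozen at $x_j$''; taken literally this would require $\hat{s}_j\in\hat{\partial}g_j(x_j)$, which a limiting subgradient $s_j$ need not satisfy. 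What you actually want (and what your word ``concatenating'' suggests) is to run all $L$ approximating sequences simultaneously, setting $\mathbf{x}^k=(x_1^k,\ldots,x_L^k)$ and $\hat{\mathbf{s}}^k=(\hat{s}_1^k,\ldots,\hat{s}_L^k)$, and then invoke the Fr\'echet product formula at $\mathbf{x}^k$. With that wording fixed, the argument is complete and faithfully reconstructs what the paper delegates to the reference.
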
 \medskip
\vspace{-0.3cm}
\paragraph{The Kurdyka-Łojasiewicz (KŁ) property.}
A specific class of concave and continuous functions, called desingularizing functions, are of particular interest in the KŁ framework to handle non-convexity. \medskip
\begin{definition}{\textbf{Concave and continuous functions \cite{bolte2014proximal}.}} \label{def:desingularizing} Let $\eta \in (0,+\infty]$. We denote by $\Phi_\eta$ the class of all concave and continuous functions $\varphi : [0,\eta)\rightarrow\RR_+$ that satisfy the following conditions:
\begin{enumerate}
    \item $\varphi(0)=0$,
    \item $\varphi$ is $C^1$ on $(0,\eta)$ and continuous at $0$,
    \item for all $s \in (0,\eta)$, $\varphi'(s)>0$.
\end{enumerate}
\end{definition}
Now, we can introduce the definition of a KŁ function. \medskip
\begin{definition}{\textbf{Kurdyka-Łojasiewicz (KŁ) property \cite{bolte2014proximal}.}} \label{def:kl}
    Let $\Psi : \Hi \rightarrow (-\infty, + \infty]$ be proper and lower semicontinuous.
    \begin{enumerate}
        \item The function $\Psi$ is said to have the \textit{KŁ property} at $\mathbf{\tilde{u}}\in$ dom $\partial\Psi$ if there exist $\eta \in (0,+\infty]$, a neighborhood $U$ of $\mathbf{\tilde{u}}$ and a function $\varphi \in \Phi_\eta$  such that for all 
        \begin{equation*}
            \mathbf u \in U \cap [\Psi(\tilde{\mathbf u}) < \Psi(\mathbf{u}) < \Psi(\tilde{\mathbf u}) + \eta],
        \end{equation*}
        the following inequality holds
        \begin{equation*}
            \varphi'(\Psi(\mathbf{u}) - \Psi(\mathbf{\tilde{u}})) \textrm{dist} (0,\partial \Psi(\mathbf{u})) \geq 1.
        \end{equation*}
        Recall that $\textrm{dist}(\mathbf{x},\partial \psi (\mathbf{u})) = \inf_{\mathbf{s} \in \partial \psi(\mathbf{u})} \vertiii{\mathbf s- \mathbf{x}}$.
        \item If $\Psi$ satisfies the KŁ property at each point of dom $\partial\Psi$, then $\Psi$ is called a KŁ function.
    \end{enumerate}
\end{definition}

The following lemma presents the KŁ property in a practical form by unifying the notion of neighborhood across its level curves. \medskip
\begin{lemma}{\textbf{Uniformized KŁ property \cite{bolte2014proximal}.}} \label{lm:unif_KL}Let $\Omega$ be a compact subset of $\Hi$. Let $\Psi:\Hi \to (-\infty,+\infty]$ be a proper and lower semicontinuous function, constant on $\Omega$ and satisfying the KŁ inequality on $\Omega$. Then there exists $\epsilon>0, \eta>0$, and $\varphi\in\Phi_\eta$ such that for all $\tilde{u}\in \Omega$ and all $u$ satisfying
\begin{equation*}
    \left\{\mathbf u \in \Hi: \textrm{dist} (u,\Omega) < \epsilon \right\} \text{ and } [\Psi(\tilde{\mathbf u}) < \Psi(\mathbf{u}) < \Psi(\tilde{\mathbf u}) + \eta]
\end{equation*}
one has
\begin{equation}
    \varphi'(\Psi(\mathbf{u}) - \Psi(\tilde{\mathbf u})) \textrm{dist} (0, \partial \Psi(\mathbf{u})) \geq 1. \label{eq5:inequality_KL}
\end{equation}
\end{lemma}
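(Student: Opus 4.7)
The strategy is a standard compactness argument. Since $\Psi$ is constant on $\Omega$, denote its common value there by $\Psi_\star$. For each $\tilde{\mathbf{u}}\in \Omega$, the pointwise KŁ property (Definition \ref{def:kl}) provides a triple $(\epsilon_{\tilde{\mathbf{u}}},\eta_{\tilde{\mathbf{u}}},\varphi_{\tilde{\mathbf{u}}})$ with $\varphi_{\tilde{\mathbf{u}}}\in\Phi_{\eta_{\tilde{\mathbf{u}}}}$ such that $\varphi'_{\tilde{\mathbf{u}}}(\Psi(\mathbf{u})-\Psi_\star)\,\textrm{dist}(0,\partial\Psi(\mathbf{u}))\ge 1$ holds on $B(\tilde{\mathbf{u}},\epsilon_{\tilde{\mathbf{u}}})\cap\{\Psi_\star<\Psi<\Psi_\star+\eta_{\tilde{\mathbf{u}}}\}$. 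Crucially, because $\Psi\equiv \Psi_\star$ on $\Omega$, the level-set condition $\Psi(\tilde{\mathbf{u}})<\Psi(\mathbf{u})<\Psi(\tilde{\mathbf{u}})+\eta$ does not depend on the choice of $\tilde{\mathbf{u}}\in\Omega$, which is what will eventually allow us to untangle $\tilde{\mathbf{u}}$ from $\mathbf{u}$ in the final estimate.

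Next, I would exploit the compactness of $\Omega$: extract from $\{B(\tilde{\mathbf{u}},\epsilon_{\tilde{\mathbf{u}}}/2)\}_{\tilde{\mathbf{u}}\in \Omega}$ a finite subcover $\{B(\tilde{\mathbf{u}}_i,\epsilon_i/2)\}_{i=1}^{p}$ of $\Omega$, and set $\eta:=\min_{1\le i\le p}\eta_{\tilde{\mathbf{u}}_i}>0$ together with $\epsilon:=\min_{1\le i\le p}\epsilon_i/2>0$. By the triangle inequality, any $\mathbf{u}$ with $\textrm{dist}(\mathbf{u},\Omega)<\epsilon$ lies in at least one of the full-radius balls $B(\tilde{\mathbf{u}}_i,\epsilon_i)$.

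The remaining step is to assemble a single desingularizing function from the finitely many $\varphi_{\tilde{\mathbf{u}}_i}$. The natural choice is the sum
\begin{equation*}
\varphi(s)\;:=\;\sum_{i=1}^{p}\varphi_{\tilde{\mathbf{u}}_i}(s)\qquad\text{for }s\in[0,\eta),
\end{equation*}
which inherits membership in $\Phi_\eta$: concavity and continuity are preserved under finite sums, $\varphi(0)=0$, $\varphi$ is $C^1$ on $(0,\eta)$, and $\varphi'=\sum_i\varphi'_{\tilde{\mathbf{u}}_i}$ is strictly positive on $(0,\eta)$ since every summand is. Now, given any $\mathbf{u}$ satisfying both hypotheses of the lemma, pick an index $i$ with $\mathbf{u}\in B(\tilde{\mathbf{u}}_i,\epsilon_i)$; since $\eta\le \eta_{\tilde{\mathbf{u}}_i}$ and $\Psi_\star=\Psi(\tilde{\mathbf{u}}_i)$, the pointwise KŁ inequality at $\tilde{\mathbf{u}}_i$ applies and yields $\varphi'_{\tilde{\mathbf{u}}_i}(\Psi(\mathbf{u})-\Psi_\star)\,\textrm{dist}(0,\partial\Psi(\mathbf{u}))\ge 1$. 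Because every $\varphi'_{\tilde{\mathbf{u}}_j}$ is nonnegative, $\varphi'(s)\ge\varphi'_{\tilde{\mathbf{u}}_i}(s)$ for all $s\in(0,\eta)$, and the uniformized inequality \eqref{eq5:inequality_KL} follows.

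The only genuinely delicate point I anticipate is the bookkeeping that decouples the ball index from the arbitrary reference point $\tilde{\mathbf{u}}\in\Omega$ appearing in the statement: the pointwise KŁ property is tied to the specific center $\tilde{\mathbf{u}}_i$, whereas the uniformized version must hold for any $\tilde{\mathbf{u}}\in\Omega$. This is precisely where the assumption that $\Psi$ is constant on $\Omega$ does the essential work, since it collapses $\Psi(\tilde{\mathbf{u}})$ to the single value $\Psi_\star$ on both sides of the inequality, making the reference point irrelevant. Everything else reduces to routine verifications that concavity and $C^1$-regularity survive finite summation and that finite open covers admit Lebesgue-number-type shrinking.
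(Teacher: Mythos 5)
Your argument is correct and is essentially the standard proof of this result from \cite{bolte2014proximal} (Lemma 6 there), which the paper simply cites without reproducing: cover $\Omega$ by the pointwise KŁ neighborhoods, pass to a finite subcover with half radii, take $\eta$ and $\epsilon$ as minima, and sum the finitely many desingularizing functions, using that $\Psi$ is constant on $\Omega$ to make the reference point $\tilde{\mathbf u}$ irrelevant. All steps check out, including the triangle-inequality argument placing any $\mathbf u$ with $\textrm{dist}(\mathbf u,\Omega)<\epsilon$ inside a full-radius ball and the verification that $\varphi'\ge\varphi'_{\tilde{\mathbf u}_i}$ preserves the inequality.
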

\begin{remark}
The KŁ property is satisfied by numerous classes of functions, and notably by those considered in typical optimization settings. See \cite{bolte2010characterizations} for an overview on this property. 
\end{remark}

\subsection{Assumptions on the functions.}
The convergence of $\acronym$ relies on several classical assumptions that we present in the following. \medskip
\begin{assumption} \label{ass:deterministic1}
\textcolor{white}{y}
    \begin{itemize}
        \item[\normalfont A$1$] $\Psi:=f+g$, where $g:=\sum_{\ell=1}^Lg_\ell$, is coercive, and bounded below. For all $\ell\in \{1,\ldots,L\}$, $g_\ell$ and $f$ are bounded from below.
        \item[\normalfont A$2$] $\Psi$ satisfy the KŁ property (Definition \ref{def:kl} and Lemma \ref{lm:unif_KL}).
    \end{itemize}
\end{assumption} \medskip

\begin{assumption} \label{ass:deterministic2} 
\textcolor{white}{y}
    \begin{itemize}
        \item[\normalfont A$3$] For all $\ell \in \{1,\ldots,L\}$, $g_\ell$ is a lower semicontinuous, proper function.
        \item[\normalfont A$4$] $f$ is continuously differentiable and there exist constants $(\beta_{\ell,j})_{1 \leq \ell,j \leq L}$ in $\RR_{++}$ such that
        \begin{align*}
            (\forall \ell,j\in\{1,\ldots,L\})&(\forall \mathbf{x} \in \Hi)(\forall v_j \in \Hi_j) \nonumber\\
            & \Vert \nabla_\ell f(\mathbf{x}+(0,\ldots,0,v_j,0,\ldots,0)) - \nabla_\ell f(\mathbf{x}) \Vert \leq \beta_{\ell,j} \Vert v_j \Vert 
        \end{align*}
        
    \end{itemize}
\end{assumption}
%\begin{remark}
Assumption {\normalfont A$1$} is sufficient to assert that the sequences generated by our algorithm are bounded \cite{attouch2010proximal}. 

Assumption {\normalfont A$4$} states that every partial gradient with respect to the block is Lipschitz continuous with respect to all the blocks, which is a quite stronger assumption than being Lipschitz continuous with respect only to its block. From this assumption we can derive multiple block smoothness, a common assumption in the BCD literature (e.g., \cite[Assumption S1-S2-S3]{salzo_parallel_2022}).
Despite this, Assumption {\normalfont A$4 $} is fairly easy to verify in practice, since it is implied by the Lipschitz continuity of $\nabla f$ with constant $\beta_f$. Indeed, we can take $\beta_{\ell,j} = \beta_f$ for all $\ell,j$. Conversely, Assumption {\normalfont A$4$} implies that $\nabla f$ is at most $\beta_f := \sqrt{\sum_{\ell,j=1}^L \beta_{\ell,j}^2}$-Lipschitz continuous. This is a consequence of the following proposition. \medskip
\begin{proposition}{\textbf{Multiple block smoothness.}} \label{prop:multi_block_smoothness}
    Suppose that Assumption \ref{ass:deterministic2} holds. For all $\boldsymbol{\varepsilon} = (\varepsilon_\ell)_{1 \leq \ell \leq L} \in \{0,1\}^L$, there exists $\beta=\sqrt{\sum_{\ell,j=1}^L \varepsilon_j \beta_{\ell,j}^2}>0$ such that for all $\mathbf{v} = (v_\ell)_{1 \leq \ell \leq L} \in (\Hi_1,\dots,\Hi_L)$ we have
    \begin{equation*}
        (\forall \mathbf{x} \in \Hi) \quad \vertiii{\nabla f(\mathbf{x}+\boldsymbol{\varepsilon}\odot \mathbf{v})-\nabla f ( \mathbf{x})  } \leq \beta \vertiii{\boldsymbol{\varepsilon}\odot \mathbf{v}},
    \end{equation*}
   where $\odot$ is the element-wise multiplication $\boldsymbol{\varepsilon} \odot \mathbf{v} = (\varepsilon_1 v_1, \varepsilon_2 v_2,\ldots, \varepsilon_L v_L)$.
\end{proposition}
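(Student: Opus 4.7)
The plan is to reduce the statement to Assumption A4 via a telescoping decomposition in which I change one block at a time, and then to collect the block-wise Lipschitz estimates by a Cauchy--Schwarz argument.

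First, I would introduce the intermediate points $\mathbf{x}^{(0)} := \mathbf{x}$ and, for $j=1,\dots,L$,
\begin{equation*}
\mathbf{x}^{(j)} := \mathbf{x}^{(j-1)} + (0,\dots,0,\varepsilon_j v_j,0,\dots,0),
\end{equation*}
with $\varepsilon_j v_j$ placed in the $j$-th slot, so that $\mathbf{x}^{(L)} = \mathbf{x} + \boldsymbol{\varepsilon}\odot \mathbf{v}$. Writing $\nabla_\ell f(\mathbf{x}^{(L)}) - \nabla_\ell f(\mathbf{x}^{(0)})$ as a telescoping sum and invoking Assumption A4 on each term (note that $\varepsilon_j \in \{0,1\}$ so $\varepsilon_j^2 = \varepsilon_j$) gives, for every $\ell \in \{1,\dots,L\}$,
\begin{equation*}
\Vert \nabla_\ell f(\mathbf{x}+\boldsymbol{\varepsilon}\odot\mathbf{v}) - \nabla_\ell f(\mathbf{x})\Vert \;\le\; \sum_{j=1}^L \beta_{\ell,j}\, \Vert \varepsilon_j v_j \Vert \;=\; \sum_{j=1}^L \varepsilon_j \beta_{\ell,j}\,\Vert v_j \Vert.
\end{equation*}

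Next, I would apply Cauchy--Schwarz to the right-hand side, writing the summand as $(\sqrt{\varepsilon_j}\beta_{\ell,j})(\sqrt{\varepsilon_j}\Vert v_j \Vert)$, to obtain
\begin{equation*}
\Vert \nabla_\ell f(\mathbf{x}+\boldsymbol{\varepsilon}\odot\mathbf{v}) - \nabla_\ell f(\mathbf{x})\Vert^2 \;\le\; \Bigl(\sum_{j=1}^L \varepsilon_j \beta_{\ell,j}^2\Bigr)\Bigl(\sum_{j=1}^L \varepsilon_j \Vert v_j \Vert^2\Bigr) = \Bigl(\sum_{j=1}^L \varepsilon_j \beta_{\ell,j}^2\Bigr)\,\vertiii{\boldsymbol{\varepsilon}\odot\mathbf{v}}^2,
\end{equation*}
where the last identity uses the definition of the norm $\vertiii{\cdot}$ as the direct sum of the block norms together with $\varepsilon_j^2 = \varepsilon_j$.

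Finally, summing these inequalities over $\ell$ yields
\begin{equation*}
\vertiii{\nabla f(\mathbf{x}+\boldsymbol{\varepsilon}\odot\mathbf{v}) - \nabla f(\mathbf{x})}^2 \;\le\; \Bigl(\sum_{\ell,j=1}^L \varepsilon_j \beta_{\ell,j}^2\Bigr)\,\vertiii{\boldsymbol{\varepsilon}\odot\mathbf{v}}^2 = \beta^2\,\vertiii{\boldsymbol{\varepsilon}\odot\mathbf{v}}^2,
\end{equation*}
and taking square roots gives the claim with the announced constant $\beta = \sqrt{\sum_{\ell,j=1}^L \varepsilon_j \beta_{\ell,j}^2}$. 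There is no real obstacle here; the only point deserving care is to use Cauchy--Schwarz in the weighted form above, so that the factor $\sum_j \varepsilon_j \Vert v_j\Vert^2$ exactly reproduces $\vertiii{\boldsymbol{\varepsilon}\odot\mathbf{v}}^2$ and no superfluous constant depending on $L$ appears.
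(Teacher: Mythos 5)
Your proof is correct and follows essentially the same route as the paper: a telescoping decomposition through intermediate points that change one block at a time, Assumption A4 on each increment, and Cauchy--Schwarz in $\RR^L$ to assemble the blockwise bounds. Your weighted form of Cauchy--Schwarz, splitting the summand as $(\sqrt{\varepsilon_j}\beta_{\ell,j})(\sqrt{\varepsilon_j}\Vert v_j\Vert)$, is in fact slightly more careful than the paper's displayed inequality and is exactly what justifies the factor $\varepsilon_j$ appearing inside the stated constant $\beta$.
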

\begin{proof}
   Let $\mathbf{x}, \mathbf{v}=(v_{\ell})_{\ell=1}^L\in\Hi=\oplus_{\ell=1}^L\Hi_{\ell}$. Note that 
\begin{equation*}
|||\nabla f(\mathbf{x}+\mathbf{v})-\nabla f(\mathbf{x})|||^2=\sum_{\ell=1}^L\|\nabla_{\ell}f(\mathbf{x}+\mathbf{v})-\nabla_{\ell}f(\mathbf{x})\|^2.
\end{equation*}
Now define,  %
$\mathbf{v}_0=\mathbf{0}\in\Hi$ and   $\mathbf{v}_j=(v_1,\ldots,v_j,0,\ldots,0)\in\Hi$ for every $j\in\{1,\ldots,L\}$. Note that $\mathbf{v}_{L}=\mathbf{v}$ and that
\begin{equation*}
(\forall j\in\{1,\ldots,L\})\quad 
\mathbf{v}_j-\mathbf{v}_{j-1}=(0,\ldots, v_{j},\ldots,0).
\end{equation*}
For every $\ell\in\{1,\ldots,L\}$, the following equality holds
\begin{align}
\|\nabla_{\ell}f(\mathbf{x}+\boldsymbol{\varepsilon} \odot \mathbf{v})-\nabla_{\ell}f(\mathbf{x})\|=\Big\|\sum_{j=1}^L(\nabla_{\ell}f(\mathbf{x}+\boldsymbol{\varepsilon} \odot \mathbf{v}_{j})-\nabla_{\ell}f(\mathbf{x}+\boldsymbol{\varepsilon} \odot \mathbf{v}_{j-1}))\Big\|\nonumber
\end{align}
Then, the triangular inequality and A4 imply 
\begin{align}
\|\nabla_{\ell}f(\mathbf{x}+\boldsymbol{\varepsilon} \odot \mathbf{v})-\nabla_{\ell}f(\mathbf{x})\| &\leq
\sum_{j=1}^L\|\nabla_{\ell}f(\mathbf{x}+\boldsymbol{\varepsilon} \odot \mathbf{v}_{j})-\nabla_{\ell}f(\mathbf{x}+\boldsymbol{\varepsilon} \odot \mathbf{v}_{j-1})\|\nonumber\\
&\leq \sum_{j=1}^L  \beta_{\ell,j}\|\varepsilon_j v_j\|
\end{align}
and therefore, from Cauchy-Schwarz in $\RR^L$,
\begin{align}
|||\nabla f(\mathbf{x}+\boldsymbol{\varepsilon}\odot\mathbf{v})-\nabla f(\mathbf{x})|||^2\leq \sum_{\ell=1}^L\left(\sum_{j=1}^L \beta_{\ell,j}\|\varepsilon_j v_j\|\right)^2\leq\sum_{\ell,j=1}^L\beta_{\ell,j}^2|||\boldsymbol{\varepsilon} \odot \mathbf{v}|||^2,
\end{align}
deducing that $\beta=\sqrt{\sum_{\ell,j=1}^L \varepsilon_j \beta_{\ell,j}^2}$ is a Lipschitz constant of $\nabla f$ with respect to the blocks selected by $\boldsymbol{\varepsilon}$. %
\end{proof}
\subsection{Assumptions on the update rules.}

We consider an essentially cyclic update scheme for the blocks in which parallel updates of different blocks may be used, paired with a potential shuffle of the updates order, as specified in the following assumption. \medskip
\begin{assumption} \label{ass:deterministic3} 
    \begin{itemize}    
        \item[\normalfont A$5$] Let $\mathbb{I}^n$ be the set of the blocks updated at iteration $n$, i.e., 
        \begin{equation}\label{eq:In_def}
        \mathbb{I}^n =\{\ell \;|\; \varepsilon_\ell^n=1\} \subseteq \{1,\ldots,L\}.
        \end{equation}
        
        There exists $K\in \mathbb{N}\backslash\{0\}$
        such that
        \begin{equation*}
            (\forall j \in \mathbb{N}), \quad \bigcup_{n=j}^{j+K-1} \mathbb{I}^n = \{1,\ldots,L\}. 
        \end{equation*}
    \end{itemize}
\end{assumption}
\begin{remark}
    Assumption {\normalfont A$5$} does not  impose any constraint on the order of the updates of the blocks. For instance, it allows for sequential update of the blocks if $K = L$ and it reduces to the classical forward-backward update for $K=1$. 
    As an example of the flexibility of our framework, one can shuffle the order of the updates inside every cycle without breaking convergence guarantees. Hence, a random shuffle such as in \cite{xu2017globally} is compatible with Assumption {\normalfont A$5$}.
\end{remark}

Before setting the main result, we need one more technical lemma. % 

\begin{lemma} \label{lm:norm_bound}
    Let $\{\mathbf{\bar{x}}^k\}_{k\in\mathbb{N}}$ be a sequence generated by Algorithm $\acronym$. Then,
    \begin{equation*}
        \vertiii{\mathbf{\bar{x}}^{k+1}-\mathbf{\bar{x}}^{k}} \leq \left(\sum_{n=k\cdot K}^{(k+1) \cdot K-1} \sum_{\ell \in \mathbb{I}^n} \Vert x_\ell^{n+1}-x_\ell^n \Vert \right),
    \end{equation*}
    where $\mathbb{I}^n$ is defined in \eqref{eq:In_def}. 
\end{lemma}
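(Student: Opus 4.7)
The plan is to unpack the definition of the cycle iterates, telescope, and then use the elementary inequality between the $\ell^2$ and $\ell^1$ norms on $\RR^{|\mathbb{I}^n|}$. The key observation is that $\mathbf{\bar{x}}^k = \mathbf{x}^{k\cdot K}$ and $\mathbf{\bar{x}}^{k+1} = \mathbf{x}^{(k+1)\cdot K}$ by the recording step in \eqref{eq5:H_BC_FB}, so the left-hand side can be rewritten as $\vertiii{\mathbf{x}^{(k+1)\cdot K}-\mathbf{x}^{k\cdot K}}$.

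First I would telescope and apply the triangle inequality for the Euclidean norm $\vertiii{\cdot}$ on $\Hi$:
\begin{equation*}
\vertiii{\mathbf{\bar{x}}^{k+1}-\mathbf{\bar{x}}^{k}} = \vertiii{\sum_{n=k\cdot K}^{(k+1)\cdot K - 1} (\mathbf{x}^{n+1}-\mathbf{x}^{n})} \leq \sum_{n=k\cdot K}^{(k+1)\cdot K - 1} \vertiii{\mathbf{x}^{n+1}-\mathbf{x}^{n}}.
\end{equation*}
Next, since $\Hi$ is the direct sum of $(\Hi_\ell)_{1\leq \ell\leq L}$, we have $\vertiii{\mathbf{x}^{n+1}-\mathbf{x}^{n}}^2 = \sum_{\ell=1}^{L} \Vert x_\ell^{n+1}-x_\ell^n \Vert^2$. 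By the update rule in \eqref{eq5:H_BC_FB}, for every $\ell\notin \mathbb{I}^n$ (that is, $\varepsilon_\ell^n=0$) we have $x_\ell^{n+1}=x_\ell^n$, so only blocks in $\mathbb{I}^n$ contribute and
\begin{equation*}
\vertiii{\mathbf{x}^{n+1}-\mathbf{x}^{n}}^2 = \sum_{\ell \in \mathbb{I}^n} \Vert x_\ell^{n+1}-x_\ell^n \Vert^2.
\end{equation*}

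Finally, I would invoke the standard inequality $\sqrt{\sum_i a_i^2}\leq \sum_i |a_i|$ for nonnegative reals, applied to the numbers $a_\ell=\Vert x_\ell^{n+1}-x_\ell^n \Vert$ with $\ell\in\mathbb{I}^n$, to get
\begin{equation*}
\vertiii{\mathbf{x}^{n+1}-\mathbf{x}^{n}} \leq \sum_{\ell \in \mathbb{I}^n} \Vert x_\ell^{n+1}-x_\ell^n \Vert,
\end{equation*}
and summing over $n=k\cdot K,\ldots,(k+1)\cdot K-1$ yields the desired bound. There is no real obstacle here; the argument is purely bookkeeping between the full-space norm $\vertiii{\cdot}$, the block norms $\Vert\cdot\Vert$, and the update indicator set $\mathbb{I}^n$. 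The only subtlety worth noting is that one has to be careful not to pull the square root inside the sum over $n$ prematurely (where it would be invalid), which is why the inequality $\ell^2\leq \ell^1$ is applied only within the inner sum over blocks at a single iteration before the outer telescoping is exposed.
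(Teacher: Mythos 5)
Your proof is correct and uses essentially the same ingredients as the paper's: the triangle inequality across the $K$ iterations of the cycle, the comparison $\ell^2\leq\ell^1$ between the full-space norm and the sum of block norms, and the fact that blocks outside $\mathbb{I}^n$ do not move. The only (immaterial) difference is the order of operations — the paper applies $\ell^2\leq\ell^1$ across blocks first and then telescopes each block over the cycle, whereas you telescope in $\Hi$ first and then pass to block norms at each iteration — and both orderings land on the identical bound.
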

\begin{proof}
        Recall that 
        \begin{align*}
            \vertiii{\mathbf{\bar{x}}^{k+1}-\mathbf{\bar{x}}^{k}}  = \sqrt{\sum_{\ell=1}^L \Vert \mathbf{\bar{x}}^{k+1}_\ell-\mathbf{\bar{x}}^{k}_\ell\Vert^2} \leq \sum_{\ell=1}^L \sqrt{\Vert \mathbf{\bar{x}}^{k+1}_\ell-\mathbf{\bar{x}}^{k}_\ell\Vert^2}  =  \sum_{\ell=1}^L \Vert \mathbf{\bar{x}}^{k+1}_\ell-\mathbf{\bar{x}}^{k}_\ell\Vert.
        \end{align*}
        Now for all $\ell\in\{1,\ldots,L\}$, the triangular inequality yields
        \begin{equation*}
            \Vert \mathbf{\bar{x}}^{k+1}_\ell-\mathbf{\bar{x}}^{k}_\ell\Vert \leq \sum_{n=k\cdot K}^{(k+1) \cdot K-1}  \Vert x_\ell^{n+1}-x_\ell^n \Vert. %
        \end{equation*}
        Thus, summing up for all $\ell$, we obtain
        \begin{equation*}    \vertiii{\mathbf{\bar{x}}^{k+1}-\mathbf{\bar{x}}^{k}} \leq \left(\sum_{n=k\cdot K}^{(k+1) \cdot K-1} \sum_{\ell=1}^L \Vert x_\ell^{n+1}-x_\ell^n \Vert \right)= \left(\sum_{n=k\cdot K}^{(k+1) \cdot K-1} \sum_{\ell \in \mathbb{I}^n} \Vert x_\ell^{n+1}-x_\ell^n \Vert \right)
        \end{equation*}
        where the last equality follows from $x_\ell^{n+1} = x_\ell^n$, for all $\ell \notin \mathbb{I}^n$ and $n \in \mathbb{N}$.
    \end{proof}
    
\subsection{Main result: convergence to critical points of $\Psi$}
The convergence to critical points of the sequence $\{\mathbf{\bar x}^k\}_{k \in \mathbb{N}}$ is a consequence of the sufficient decrease property of our algorithm, paired with the existence at each point of a subgradient bounded by the norm of the difference between iterates.
\begin{proposition} \label{prop:decrease}
Suppose that Assumptions \ref{ass:deterministic1}, \ref{ass:deterministic2}, and \ref{ass:deterministic3} hold. Let $\{\mathbf{x}^n\}_{n\in\mathbb{N}}$, and $\{\mathbf{\bar{x}}^k\}_{k\in\mathbb{N}}$ be the sequences generated by Algorithm $\acronym$. The following assertions hold. \medskip %

    \begin{enumerate} 
        \item \emph{Sufficient decrease property}. The sequence $\{\Psi(\mathbf{\bar{x}}^k)\}_{k\in\mathbb{N}}$ is non-increasing. For each $n$, let $\beta_f^n = \sqrt{\sum_{j\in \mathbb{I}^n,1\leq \ell \leq L} \varepsilon_j \beta_{\ell,j}^2}$ and $0<\tau_\ell^n < 1/\beta_f^n$.  Then, for all $k\geq 0$,% 
    \begin{align*}
        \Psi(\mathbf{\bar{x}}^{k+1})+%\rho_k 
        \left(\sum_{n=k\cdot K}^{(k+1) \cdot K-1} \sum_{\ell \in \mathbb{I}^n} \frac{1}{2}\left(\frac{1}{\tau_\ell^n}-\beta_f^n \right)\Vert x_\ell^n-x_\ell^{n+1} \Vert^2 \right) &\leq
        \Psi(\mathbf{\bar{x}}^k).
        \end{align*}
    Furthermore,
    \begin{equation*}
        \sum_{n=0}^{+\infty} \left( \sum_{\ell=1}^{L} \Vert x_\ell^n - x_\ell^{n+1} \Vert^2 \right) < + \infty,
    \end{equation*}
    which implies 
        \noindent $\lim_{n\rightarrow+\infty} \Vert x_\ell^n-x_\ell^{n+1} \Vert =0$ for every $\ell\in\{1,\ldots,L\}$, which in turn yields $\lim_{n\rightarrow+\infty} \vertiii{ \mathbf{x}^n-\mathbf{x}^{n+1}} =0$.
    \item \emph{Subgradient bound}. For each $k \in \mathbb{N}$ define
\begin{equation*}
    \bar{B}^{k+1} = \left(\frac{x_{\ell}^{k_{\ell}}-x_{\ell}^{k_{\ell+1}}}{\tau_{\ell,k_\ell}} - \nabla_\ell f(\mathbf{x}^{k_\ell}) + \nabla_\ell f (\mathbf{\bar{x}}^{k+1})\right)_{1\leq \ell \leq L},
\end{equation*}
where $k_\ell$ is a positive integer such that $k \cdot K \leq k_\ell \leq (k+1) \cdot K -1$, and is the last iteration of cycle $k$ at which block $\ell$ is updated. Then $\bar{B}^{k+1} \in \partial \Psi(\mathbf{\bar{x}}^{k+1})$ and there exist  positive numbers $\tau_k$ such that:%
\begin{equation}\label{eq:bound_subgradient}
    \vertiii{\bar{B}^{k+1}} \leq \left(\frac{1}{\tau_k} + \beta_f\right) \left(\sum_{n=k\cdot K}^{(k+1) \cdot K-1} L \sum_{\ell \in \mathbb{I}^n} \frac{1}{2}\left(\frac{1}{\tau_\ell^n}-\beta_f^n \right)\Vert x_\ell^n-x_\ell^{n+1} \Vert \right).
\end{equation} \end{enumerate} 
\end{proposition}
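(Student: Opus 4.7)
The plan is to establish the two assertions sequentially, relying on a descent lemma derived from Proposition~\ref{prop:multi_block_smoothness} and on the variational characterization of the proximal step.

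\emph{Sufficient decrease.} First I would obtain a per-iteration descent inequality. Since $\mathbf{x}^{n+1}-\mathbf{x}^n$ is supported on $\mathbb{I}^n$, integrating the Lipschitz estimate of Proposition~\ref{prop:multi_block_smoothness} along the segment from $\mathbf{x}^n$ to $\mathbf{x}^{n+1}$ yields
\begin{equation*}
f(\mathbf{x}^{n+1}) \leq f(\mathbf{x}^n) + \sum_{\ell \in \mathbb{I}^n}\langle \nabla_\ell f(\mathbf{x}^n), x_\ell^{n+1}-x_\ell^n\rangle + \frac{\beta_f^n}{2}\sum_{\ell \in \mathbb{I}^n}\|x_\ell^{n+1}-x_\ell^n\|^2.
\end{equation*}
For each $\ell\in\mathbb{I}^n$, comparing the values of the proximal objective of \eqref{eq:prox_equation} at $x_\ell^{n+1}$ and $x_\ell^n$ provides
\begin{equation*}
g_\ell(x_\ell^{n+1}) - g_\ell(x_\ell^n) \leq -\langle \nabla_\ell f(\mathbf{x}^n), x_\ell^{n+1}-x_\ell^n\rangle - \frac{1}{2\tau_\ell^n}\|x_\ell^{n+1}-x_\ell^n\|^2.
\end{equation*}
Summing the latter inequality over $\ell\in\mathbb{I}^n$ and adding it to the descent bound on $f$ makes the gradient inner products cancel, which produces
\begin{equation*}
\Psi(\mathbf{x}^{n+1}) \leq \Psi(\mathbf{x}^n) - \sum_{\ell\in\mathbb{I}^n}\frac{1}{2}\Big(\frac{1}{\tau_\ell^n}-\beta_f^n\Big)\|x_\ell^{n+1}-x_\ell^n\|^2.
\end{equation*}
Under $\tau_\ell^n<1/\beta_f^n$, this is a strict descent. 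Telescoping from $n=k\cdot K$ to $(k+1)\cdot K-1$ and using $\mathbf{\bar{x}}^{k+1}=\mathbf{x}^{(k+1)K}$, $\mathbf{\bar{x}}^k=\mathbf{x}^{kK}$ yields the announced cycle inequality. Since $\Psi$ is bounded below by Assumption~{\normalfont A$1$}, summing over $k$ renders $\sum_{n\geq 0}\sum_{\ell\in\mathbb{I}^n}\|x_\ell^{n+1}-x_\ell^n\|^2$ finite; because $x_\ell^{n+1}=x_\ell^n$ whenever $\ell\notin\mathbb{I}^n$, this extends to all blocks, from which the announced limits follow.

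\emph{Subgradient bound.} For every block $\ell$, let $k_\ell$ be the last iteration within cycle $k$ at which $\ell$ is updated; then $x_\ell^{k_\ell+1}=\bar x_\ell^{k+1}$. Applying \eqref{eq:subgradient_prox} to the proximal step producing $x_\ell^{k_\ell+1}$ gives
\begin{equation*}
\frac{x_\ell^{k_\ell}-x_\ell^{k_\ell+1}}{\tau_\ell^{k_\ell}} - \nabla_\ell f(\mathbf{x}^{k_\ell}) \in \partial g_\ell(\bar x_\ell^{k+1}).
\end{equation*}
Adding $\nabla_\ell f(\mathbf{\bar x}^{k+1})$ on both sides and invoking the separability of the limiting subdifferential provided by Proposition~\ref{prop:sub} shows that $\bar B^{k+1}\in \partial\Psi(\mathbf{\bar x}^{k+1})$. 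For the norm, the triangular inequality in $\Hi$ bounds $\vertiii{\bar B^{k+1}}$ by $\sum_\ell \|x_\ell^{k_\ell}-x_\ell^{k_\ell+1}\|/\tau_\ell^{k_\ell} + \sum_\ell\|\nabla_\ell f(\mathbf{x}^{k_\ell})-\nabla_\ell f(\mathbf{\bar x}^{k+1})\|$. The full-gradient Lipschitz constant $\beta_f$ obtained from Assumption~{\normalfont A$4$} controls each gradient difference by $\beta_f\vertiii{\mathbf{x}^{k_\ell}-\mathbf{\bar x}^{k+1}}$, and a cycle-wise version of the argument of Lemma~\ref{lm:norm_bound} bounds $\vertiii{\mathbf{x}^{k_\ell}-\mathbf{\bar x}^{k+1}}$ by a sum of within-cycle iterate differences. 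Choosing $\tau_k$ to be the minimum of the stepsizes active in the cycle and collecting terms produces \eqref{eq:bound_subgradient}.

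\emph{Main obstacle.} The subgradient construction is the most delicate part: the proximal step defining $\bar x_\ell^{k+1}$ uses $\nabla_\ell f$ evaluated at the intermediate iterate $\mathbf{x}^{k_\ell}$, not at the cycle endpoint $\mathbf{\bar x}^{k+1}$, so one must correct block by block by $\nabla_\ell f(\mathbf{\bar x}^{k+1})-\nabla_\ell f(\mathbf{x}^{k_\ell})$ and control this correction uniformly over $\ell$ and over the possibly distinct $k_\ell$. This is where Assumption~{\normalfont A$4$} together with the structural argument of Lemma~\ref{lm:norm_bound} must be combined carefully to recover a bound proportional to the sum of iterate increments within the cycle.
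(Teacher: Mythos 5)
Your proposal is correct and follows essentially the same route as the paper: the per-iteration descent is obtained by combining the block descent lemma from Assumption A4 with the comparison of proximal objective values, then telescoped over a cycle and summed using the lower bound on $\Psi$; the subgradient is built from the prox optimality condition at the last update $k_\ell$ of each block, corrected by $\nabla_\ell f(\mathbf{\bar x}^{k+1})-\nabla_\ell f(\mathbf{x}^{k_\ell})$ and bounded via the block Lipschitz constants and the within-cycle increments. The only cosmetic difference is that the paper bounds the gradient correction block-by-block with $\max_{\ell,j}\beta_{\ell,j}$ rather than the aggregate constant $\beta_f$, which yields the same form of estimate.
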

\begin{proof}
\textbf{1.} 
First, for all $\ell \notin \mathbb{I}^n$, $x_\ell^{n+1} = x_\ell^n$. For all $\ell \in \mathbb{I}^n$, by applying the first order optimality conditions of the proximity operator \eqref{eq:prox_equation} we obtain: %
\begin{equation*}%
        g_\ell(x_\ell^{n+1}) + \frac{1}{2 \tau_\ell^n} \Vert x_\ell^n-x_\ell^{n+1} \Vert^2 \leq g(x_\ell^n) + \langle \nabla_\ell f(\mathbf{x}^{n}), x_\ell^n-x_\ell^{n+1} \rangle,
\end{equation*}
(see Appendix \ref{secA:proofs_BCD}, Lemma \ref{lm:descent_prox}),
which we can sum up for all $\ell \in \mathbb{I}^n$ to obtain
\begin{equation}\label{eq5:subdif_g_Jn}
    \sum_{\ell \in \mathbb{I}^n} \left(g_\ell(x_\ell^{n+1}) + \frac{1}{2 \tau_\ell^n} \Vert x_\ell^n-x_\ell^{n+1} \Vert^2 \right)\leq \sum_{\ell \in \mathbb{I}^n} \big( g(x_\ell^n) + \langle \nabla_\ell f(\mathbf{x}^{n}), x_\ell^n-x_\ell^{n+1} \rangle \big).
\end{equation}
We now invoke {\normalfont A$4$} from Assumption \ref{ass:deterministic2} and by splitting the scalar product along the blocks we get %
\begin{align*}
    f(\mathbf{x}^n + [x_1^{n+1}-x_1^{n},\ldots,x_L^{n+1}-x_L^{n}]^\top) \leq f(\mathbf{x}^n) + \sum_{\ell\in \mathbb{I}^n} & \Big(\langle \nabla_\ell f(\mathbf{x}^n) , x_\ell^{n+1}-x_\ell^n \rangle \\& + \frac{\beta_f^n}{2}\Vert x_\ell^n-x_\ell^{n+1} \Vert^2\Big).
\end{align*}
Note that $f(\mathbf{x}^n + [x_{1}^{n+1}-x_{1}^n,\ldots,x_{L}^{n+1}-x_{L}^n]^\top) = f(\mathbf{x}^{n+1})$, and thus
\begin{equation}
\label{eq5:descent_f_Jn}
    \sum_{\ell \in \mathbb{I}^n} \left(\langle \nabla_\ell f(\mathbf{x}^{n}), x_\ell^n-x_\ell^{n+1} \rangle \right) \leq f(\mathbf{x}^{n})-f(\mathbf{x}^{n+1}) + \frac{\beta_f^n}{2} \sum_{\ell \in \mathbb{I}^n} \Vert x_\ell^n-x_\ell^{n+1} \Vert^2.
\end{equation}
Combining inequalities \eqref{eq5:subdif_g_Jn} and \eqref{eq5:descent_f_Jn} we obtain
\begin{align*}
    \sum_{\ell \in \mathbb{I}^n} \left(g_\ell(x_\ell^{n+1}) + \frac{1}{2\tau_\ell^n} \Vert x_\ell^n-x_\ell^{n+1} \Vert^2\right)\leq \sum_{\ell \in \mathbb{I}^n} & \Big( g(x_\ell^n) + f(\mathbf{x}^{n})-f(\mathbf{x}^{n+1}) \\&+ \frac{\beta_f^n}{2} \sum_{\ell \in \mathbb{I}^n} \Vert x_\ell^n-x_\ell^{n+1} \Vert^2\Big).
\end{align*}
We add $\sum_{\ell \notin \mathbb{I}^n} g_\ell(x_\ell^n)$ to each side of this inequality, and since $\sum_{\ell \notin \mathbb{I}^n} g_\ell(x_\ell^n)=\sum_{\ell \notin \mathbb{I}^n} g_\ell(x_\ell^{n+1})$ (these blocks are not updated),  we have
\begin{equation}
    \Psi(\mathbf{x}^{n+1}) +\sum_{\ell \in \mathbb{I}^n} \frac{1}{2}\left(\frac{1}{\tau_\ell^n}-\beta_f^n \right)\Vert x_\ell^n-x_\ell^{n+1} \Vert^2 \leq \Psi(\mathbf{x}^n). \label{eq5:suff_dec_n}
\end{equation}
Now bundle $K$ block iterations together to define the sequence $(\mathbf{\bar{x}}^k)_{k\in\mathbb{N}}$. Let $k \in \mathbb{N}$, summing inequality \eqref{eq5:suff_dec_n} on all iterations from $n=k \cdot K$ to $n=(k+1) \cdot K-1$, and taking into account that $\mathbf{x}^{k\cdot K}=\mathbf{\bar{x}}^{k}$ and $\mathbf{x}^{(k+1)\cdot K}=\mathbf{\bar{x}}^{k+1}$, we obtain %
    \begin{align*}
        \Psi(\mathbf{\bar{x}}^{k+1}) + \sum_{n=k\cdot K}^{(k+1) \cdot K-1} \sum_{\ell \in \mathbb{I}^n} \frac{1}{2}\left(\frac{1}{\tau_\ell^n}-\beta_f^n \right)\Vert x_\ell^n-x_\ell^{n+1} \Vert^2 \leq \Psi(\mathbf{\bar{x}}^{k}),
    \end{align*}
    and the result follows.\\

\noindent Let us now take $n_0\in\mathbb{N}\backslash\{0\}$. Summing up inequality \eqref{eq5:suff_dec_n} from $n=0$ to $n_0-1$, we obtain
\begin{align*}
    \sum_{n=0}^{n_0-1} \left(\sum_{\ell \in \mathbb{I}^n} \frac{1}{2}\left(\frac{1}{\tau_\ell^n}-\beta_f^n \right)\Vert x_\ell^n-x_\ell^{n+1} \Vert^2 \right) & \leq \Psi(\mathbf{x}^0)-\Psi(\mathbf{x}^{n_0}) \\
    & \leq \Psi(\mathbf{x}^0)- \inf \Psi
\end{align*}
and then, by setting  $C:=\displaystyle{\min_{n=0,\dots,n_0-1}\frac{1}{2}\left(\frac{1}{\tau_\ell^n}-\beta_f^n \right)>0}$ we get 
\begin{equation*}
    \sum_{n=0}^{n_0-1} \left(\sum_{\ell \in \mathbb{I}^n} \Vert x_\ell^n-x_\ell^{n+1} \Vert^2 \right) \leq \frac{1}{C}\left(\Psi(\mathbf{x}^0)-\inf \Psi \right) < + \infty.
\end{equation*}
The result folows by taking $n_0\to+\infty$. %

Now, set
\begin{equation*}
    (\forall k \in \mathbb{N}), \quad \rho_k = \min_{k \cdot K \leq n \leq (k+1) \cdot K-1} \left(\frac{1}{\tau_\ell^n}-\frac{\beta_f^n}{2} \right) \geq 0,
\end{equation*}
and set
\begin{equation}
    C_k = \sum_{n=k\cdot K}^{(k+1) \cdot K-1} \sum_{\ell \in \mathbb{I}^n}  \Vert x_\ell^n-x_\ell^{n+1} \Vert^2. \label{eq:Ck}
\end{equation}

\textbf{2.} For all $k\in\mathbb{N}$ and for all $\ell\in\{1,\ldots,L\}$, there exists an iteration index $k_\ell \in \mathbb{N} \cap [k\cdot K, (k+1) \cdot K-1]$ in which block $\ell$ receives its last update in cycle $k$. Thus for all $\ell\in \{1,\ldots,L\}$ from \eqref{eq:subgradient_prox}
\begin{equation*}
    \frac{x_{\ell}^{k_{\ell}}-x_{\ell}^{k_{\ell+1}}}{\tau_{\ell}^{k_\ell}} - \nabla_\ell f(\mathbf{x}^{k_\ell}) \in \partial g_\ell (x_{\ell}^{k_{\ell+1}}),
\end{equation*} 
which yields
\begin{align*}
    \bar{B}_\ell^{k+1} & := \frac{x_{\ell}^{k_{\ell}}-x_{\ell}^{k_{\ell+1}}}{\tau_{\ell}^{k_\ell}} - \nabla_\ell f(\mathbf{x}^{k_\ell}) + \nabla_\ell f (\mathbf{\bar{x}}^{k+1})\\ & \in \partial g_\ell (x_{\ell}^{k_{\ell+1}}) + \nabla_\ell f (\mathbf{\bar{x}}^{k+1}) 
 = \partial_\ell \Psi(\mathbf{\bar{x}}^{k+1}).
\end{align*}
where the last equality follows from Proposition \ref{prop:sub}. %
Hence, by setting $\bar{B}^{k+1} = (\bar{B}^{k+1}_\ell)_{1 \leq \ell \leq L}$, we have $\bar{B}^{k+1} \in \partial \Psi(\mathbf{\bar x}^{k+1}$. An upper bound on the norm of $\bar{B}^{k+1}$ follows from the block-smoothness property of $f$. We have for all $\ell\in\{1,\ldots,L\}$
\begin{align*}
    \Vert \nabla_\ell f(\mathbf{\bar{x}}^{k+1})-\nabla_\ell f(\mathbf{x}^{k_\ell}) \Vert & \leq \sum_{j=1}^L \beta_{\ell,j} \Vert \bar{x}^{k+1}_j - x^{k_\ell}_j \Vert \nonumber \\
    & = \sum_{j=1}^L \beta_{\ell,j} \left\Vert \sum_{n=k_\ell}^{(k+1)\cdot K-1} x_j^{n+1} - x_j^n \right\Vert \nonumber \\
    & \leq \sum_{j=1}^L \beta_{\ell,j} \sum_{n=k_\ell}^{(k+1)\cdot K-1} \Vert x_j^{n+1} - x_j^n \Vert \nonumber \\
    & \leq \left(\max_{1\leq \ell,j \leq L} \beta_{\ell,j} \right)\sum_{j=1}^L  \sum_{n=k\cdot K}^{(k+1)\cdot K-1} \Vert x_j^{n+1} - x_j^n \Vert \nonumber \\
    & \leq \left(\max_{1\leq \ell,j \leq L} \beta_{\ell,j} \right) \sum_{n=k\cdot K}^{(k+1)\cdot K-1} \sum_{j\in \mathbb{I}^n} \Vert x_j^{n+1} - x_j^n \Vert.
\end{align*}
Now define 
\begin{equation*}
    \nu_k = L\left(\frac{1}{\min_{k\cdot K \leq n \leq (k+1) \cdot K-1} \tau_\ell^n} + \max_{1\leq \ell,j \leq L} \beta_{\ell,j}\right), 
\end{equation*}
and 
\begin{equation*}
    D_k = \sum_{n=k\cdot K}^{(k+1)\cdot K-1} \sum_{j\in \mathbb{I}^n} \Vert x_j^{n+1} - x_j^n \Vert.
\end{equation*}
Thus, for all $k\in\mathbb{N}$, there exists an element  $\bar{B}^{k+1} \in \partial\Psi(\mathbf{x}^{k+1})$ whose norm is upper bounded by:
\begin{equation*}
    \vertiii{\bar{B}^{k+1}} \leq \nu_k D_k.
\end{equation*}
\end{proof}
The proof of the convergence of the sequence requires the study of the limit points set, defined as follows. \medskip
\begin{definition}{\textbf{Limit points set \cite{bolte2014proximal}.}} \label{def:limit_point_set}
 The set of all limit points of sequences generated by $\acronym$ from a starting point $\mathbf{x}^0=\mathbf{\bar{x}}^0$ will be denoted by $\lp(\mathbf{\bar{x}}^0)$:
    \begin{align*}
        \lp(\mathbf{\bar{x}}^0) = \{\widehat{\mathbf{x}} \in \Hi, \exists & \text{ an increasing sequence of integers } \{k_j\}_{j\in\mathbb{N}},  \\ &\text{ such that } \mathbf{\bar x}^{k_j} \rightarrow \widehat{\mathbf{x}} \text{ as } j \rightarrow +\infty \}
    \end{align*}
\end{definition}
The properties of the limit points of sequences produced by some block algorithms are investigated in \cite{bolte2014proximal}, small modifications are required in our context. \medskip
\begin{lemma}{\textbf{Properties of the limit points set.}}
    \label{lm:limit_point_set}
    Suppose that Assumptions \ref{ass:deterministic1} and \ref{ass:deterministic2} hold. 
    Let $\{\mathbf{\bar{x}}^k\}_{k \in \mathbb{N}}$ be a sequence generated by Algorithm $\acronym$ starting from $\mathbf{\bar{x}}^0 = \mathbf{x}^0$. %
    The following hold:
    \begin{enumerate} 
        \item $\emptyset \neq \lp(\mathbf{\bar{x}}^0) \subset$ crit $\Psi$.
        \item We have 
        \begin{equation*}
            \lim_{k \rightarrow \infty} \mathrm{dist} (\mathbf{\bar{x}}^k,\lp(\mathbf{\bar{x}}^0)) = 0.
        \end{equation*}
        \item $\lp(\mathbf{\bar{x}}^0)$ is a nonempty, compact and connected set.
        \item The objective function $\Psi$ is finite and constant on $\lp(\mathbf{\bar{x}}^0)$.
    \end{enumerate}
\end{lemma}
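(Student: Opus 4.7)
The four assertions are established in the order (1) non-emptiness and criticality, (2) distance to the limit set, (3) topological properties of $\lp(\mathbf{\bar{x}}^0)$, (4) constancy of $\Psi$ on $\lp(\mathbf{\bar{x}}^0)$. The main engine throughout is Proposition~\ref{prop:decrease}: the sufficient decrease combined with coercivity (A1) gives a bounded sequence with $\|x_\ell^{n+1}-x_\ell^n\|\to 0$, hence (via Lemma~\ref{lm:norm_bound}) $\vertiii{\mathbf{\bar x}^{k+1}-\mathbf{\bar x}^k}\to 0$, and the bound~\eqref{eq:bound_subgradient} supplies a subgradient $\bar{B}^{k+1}\in\partial\Psi(\mathbf{\bar x}^{k+1})$ with $\vertiii{\bar{B}^{k+1}}\to 0$.

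\textbf{Assertion (1).} Non-emptiness is immediate: the sequence $\{\mathbf{\bar x}^k\}_k$ lives inside the sublevel set $\{\Psi\le\Psi(\mathbf{\bar x}^0)\}$, which is bounded by the coercivity of $\Psi$ in A1, so Bolzano--Weierstrass yields at least one cluster point. For the inclusion $\lp(\mathbf{\bar x}^0)\subset\operatorname{crit}\Psi$, I fix $\widehat{\mathbf{x}}\in\lp(\mathbf{\bar x}^0)$ and a subsequence $\mathbf{\bar x}^{k_j}\to\widehat{\mathbf{x}}$. The key step—and the main obstacle—is to show $\Psi(\mathbf{\bar x}^{k_j})\to\Psi(\widehat{\mathbf{x}})$, because lower-semicontinuity of $g_\ell$ in A3 only yields $\liminf \Psi(\mathbf{\bar x}^{k_j})\ge\Psi(\widehat{\mathbf{x}})$. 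To obtain the matching $\limsup$ inequality, for each block $\ell$ and each cycle $k_j$ I pick the last update index $(k_j)_\ell$ and compare the proximal minimizer $x_\ell^{(k_j)_\ell+1}$ against the candidate $\widehat{x}_\ell$:
\begin{equation*}
    g_\ell(x_\ell^{(k_j)_\ell+1}) + \tfrac{1}{2\tau_\ell^{(k_j)_\ell}}\|x_\ell^{(k_j)_\ell+1}-x_\ell^{(k_j)_\ell}+\tau_\ell^{(k_j)_\ell}\nabla_\ell f(\mathbf{x}^{(k_j)_\ell})\|^2 \le g_\ell(\widehat{x}_\ell) + \tfrac{1}{2\tau_\ell^{(k_j)_\ell}}\|\widehat{x}_\ell-x_\ell^{(k_j)_\ell}+\tau_\ell^{(k_j)_\ell}\nabla_\ell f(\mathbf{x}^{(k_j)_\ell})\|^2.
\end{equation*}
Because $\|x_\ell^{n+1}-x_\ell^n\|\to 0$, the iterate $x_\ell^{(k_j)_\ell+1}$ also converges to $\widehat{x}_\ell$, so passing to the $\limsup$ and using continuity of $f$ and $\nabla f$ yields $\limsup_j g_\ell(x_\ell^{(k_j)_\ell+1})\le g_\ell(\widehat{x}_\ell)$; summing on $\ell$ and adding $f$ gives $\Psi(\mathbf{\bar x}^{k_j})\to\Psi(\widehat{\mathbf{x}})$. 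Combined with $\bar{B}^{k_j+1}\in\partial\Psi(\mathbf{\bar x}^{k_j+1})$, $\mathbf{\bar x}^{k_j+1}\to\widehat{\mathbf{x}}$ (since $\vertiii{\mathbf{\bar x}^{k+1}-\mathbf{\bar x}^k}\to 0$), and $\vertiii{\bar{B}^{k_j+1}}\to 0$ from~\eqref{eq:bound_subgradient}, the closedness of the graph of $\partial\Psi$ in the definition of the limiting subdifferential gives $0\in\partial\Psi(\widehat{\mathbf{x}})$.

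\textbf{Assertions (2) and (3).} For (2) I argue by contradiction: if $\operatorname{dist}(\mathbf{\bar x}^k,\lp(\mathbf{\bar x}^0))\not\to 0$, there is $\delta>0$ and a subsequence with $\operatorname{dist}(\mathbf{\bar x}^{k_j},\lp(\mathbf{\bar x}^0))\ge\delta$; boundedness allows extracting a further converging subsequence whose limit lies in $\lp(\mathbf{\bar x}^0)$, a contradiction. For (3), $\lp(\mathbf{\bar x}^0)=\bigcap_{K\in\mathbb{N}}\overline{\{\mathbf{\bar x}^k:k\ge K\}}$ is an intersection of closed sets contained in the bounded sublevel set, hence compact; connectedness is the standard consequence of $\vertiii{\mathbf{\bar x}^{k+1}-\mathbf{\bar x}^k}\to 0$ together with (2) and compactness (if $\lp(\mathbf{\bar x}^0)$ were disconnected, the vanishing-step condition would force a cluster point between the components, yielding the usual contradiction as in~\cite{bolte2014proximal}).

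\textbf{Assertion (4).} The sequence $\{\Psi(\mathbf{\bar x}^k)\}_k$ is non-increasing by Proposition~\ref{prop:decrease} and bounded below by A1, so it converges to some $\Psi^\star\in\mathbb{R}$. For any $\widehat{\mathbf{x}}\in\lp(\mathbf{\bar x}^0)$, the convergence $\Psi(\mathbf{\bar x}^{k_j})\to\Psi(\widehat{\mathbf{x}})$ established in the proof of (1) forces $\Psi(\widehat{\mathbf{x}})=\Psi^\star$, so $\Psi$ is finite and constantly equal to $\Psi^\star$ on $\lp(\mathbf{\bar x}^0)$. The principal difficulty of the whole proof is really the $\limsup$ step inside (1); everything else follows by combining it with Proposition~\ref{prop:decrease} and standard topological arguments.
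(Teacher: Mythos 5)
Your proposal is correct and follows essentially the same route as the paper: the key $\limsup$ step for $g_\ell$ is obtained, exactly as in the paper's proof, by comparing the proximal minimizer at the last update index of each block within a cycle against $\widehat{x}_\ell$ and letting the vanishing step differences and the boundedness of $\nabla_\ell f$ kill the quadratic terms, then invoking closedness of $\partial\Psi$ together with $\vertiii{\bar{B}^{k+1}}\to 0$. The only difference is cosmetic — you spell out the standard topological arguments for assertions (2)–(4) that the paper delegates to \cite[Remark 5 \& Lemma 5]{bolte2014proximal}.
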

\begin{proof}
    \noindent 1.%
    Let $\widehat{\mathbf{x}}$ be a limit point of $\{\mathbf{\bar{x}}^k\}_{k \in \mathbb{N}}$. By Definition \ref{def:limit_point_set} there exists a subsequence  $\{\mathbf{\bar{x}}^{k_q}\}_{q \in \mathbb{N}}$ such that $\mathbf{\bar{x}}^{k_q} \rightarrow \widehat{\mathbf{x}}$. Using assumption {\normalfont A$3$}, it follows that
    \begin{equation*}
        (\forall \ell \in \{1,\ldots,L\}), \quad \liminf_{q \rightarrow +\infty} g_\ell(x_\ell^{k_q}) \geq g_\ell(\hat{x}_\ell).
    \end{equation*}
    It is enough to prove that \begin{equation}\label{eq:upper_continuity}
        (\forall \ell \in \{1,\ldots,L\}), \quad  \limsup_{q \rightarrow +\infty}g_\ell(x_\ell^{k_q}) \leq g_\ell(\hat{x}_\ell).
    \end{equation}
    Indeed, since $f$ is continuous, \eqref{eq:upper_continuity} implies that
    \begin{equation*}
        \lim_{q\rightarrow \infty} \Psi(\mathbf{\bar{x}}^{k_q}) = \Psi(\widehat{\mathbf{x}}).
    \end{equation*}
    For all $k\in\mathbb{N}$ and for all $\ell\in\{1,\ldots,L\}$, we denote by $k_\ell\in\mathbb{N}$ the iteration in which block $\ell$ has received its last update in the $k$-th cycle. We have
    \begin{align*}
       (\forall \ell \in \{1,\ldots,L\}) \quad x_\ell^{k+1} = \argmin_{x_\ell \in \Hi_\ell}\left\{ \langle x_\ell-x_{\ell}^{k_\ell}, \nabla_\ell f(\mathbf{x}^{k_\ell}) \rangle + \frac{1}{2\tau_\ell^{k_\ell}} \Vert x_\ell- x_{\ell}^{k_\ell} \Vert^2 + g_\ell(x_\ell) \right\}.
    \end{align*}
    Thus, for $x_\ell = \widehat{x}_\ell$ it holds
    \begin{align*}
        \langle x_\ell^k-x_{\ell}^{k_\ell}, \nabla_\ell f(\mathbf{x}^{k_\ell}) \rangle + \frac{1}{2\tau_\ell^{k_\ell}} \Vert x_\ell^k- x_{\ell}^{k_\ell} \Vert^2 + g_\ell(x_\ell^k) \leq & \langle \widehat{x}_\ell-x_{\ell}^{k_\ell}, \nabla_\ell f(\mathbf{x}^{k_\ell}) \rangle \\ &+ \frac{1}{2\tau_\ell^{k_\ell}} \Vert \widehat{x}_\ell- x_{\ell}^{k_\ell} \Vert^2 + g_\ell(\widehat{x}_\ell).
    \end{align*}
    The index $k_\ell$ depends implicitly on $k$. Let $q\in\mathbb{N}$. For the rest of the proof we need to extract a converging subsequence, and to note the dependence to $q$ we will write $k_{q,\ell}$ to indicate the last update received by block $\ell$ at cycle $k_q$. %
    Taking then $k = k_q$, we obtain 
    \begin{align}
        & \langle x_\ell^{k_q}-x_\ell^{k_{q,\ell}}, \nabla_\ell f(\mathbf{x}^{k_{q,\ell}}) \rangle + \frac{1}{2\tau_\ell^{k_{q,\ell}}} \Vert x_\ell^{k_q}- x_\ell^{k_{q,\ell}} \Vert^2 + g_\ell(x_\ell^{k_q}) \nonumber \\& \leq \langle \widehat{x}_\ell-x_\ell^{k_{q,\ell}}, \nabla_\ell f(\mathbf{x}^{k_{q,\ell}}) \rangle + \frac{1}{2\tau_\ell^{k_{q,\ell}}} \Vert \widehat{x}_\ell- x_\ell^{k_{q,\ell}} \Vert^2 + g_\ell(\widehat{x}_\ell) \label{eq:upper_continuity_inequality}
    \end{align}
    Now we look at the limit when $q$ goes to infinity. Using the following  properties:
    \begin{itemize}
        \item[--] $\Vert x_\ell^{k_q}-x_\ell^{k_{q,\ell}}\Vert$ goes to $0$ as $q$ goes to infinity (Proposition \ref{prop:decrease}, item 1.),
        \item[--] $\nabla_\ell f$ is Lipschitz continuous and the sequence  $(\mathbf{x}^{n})_{n\in\mathbb{N}}$ is bounded (Assumption {\normalfont A$4$}), 
        \item[--] $\Vert \widehat{x}_\ell - x_\ell^{k_{q,\ell}} \Vert \leq \Vert \widehat{x}_\ell - x_\ell^{k_q} \Vert + \Vert x_\ell^{k_q} - x_\ell^{k_{q,\ell}} \Vert $ and both terms on the right-hand side of the inequality go to $0$ as $q$ goes to infinity.
    \end{itemize}
    Hence, \eqref{eq:upper_continuity_inequality} yields
    \begin{equation*}
        \limsup_{q\rightarrow +\infty} g_\ell(x_\ell^{k_q}) \leq g_\ell(\widehat{x}_\ell)
    \end{equation*}
    Now, combining points (1) and (2) of Proposition \ref{prop:decrease} that $\bar{B}^{k} \rightarrow 0$ as $k\rightarrow + \infty$.  The closedness property of $\partial \Psi$ \cite[Theorem 8.6]{VarAnalRockafellar} implies that $0\in\partial \Psi( \widehat{\mathbf{x}})$, and therefore that $\widehat{\mathbf{x}}$ is a critical point of $\Psi$. \medskip
    We have from Proposition \ref{prop:decrease}(1) and Lemma \ref{lm:norm_bound} that $\vertiii{ \mathbf{\bar x}^{k+1}-\mathbf{\bar{x}}^k} \rightarrow 0$ as $k\to+\infty$, % 
    and thus 2. and 3. hold (see \cite[Remark 5 \& Lemma 5]{bolte2014proximal}).
    4. is derived from 1. (\cite[Lemma 5]{bolte2014proximal}). 
    \end{proof}
Now that we have established the %
decrease of the objective function at each iteration and the properties of the limit point set, we are ready to state our main result. \medskip
\begin{theorem} \label{th:deterministic}
Suppose that Assumptions \ref{ass:deterministic1}, \ref{ass:deterministic2}, and \ref{ass:deterministic3} hold. Let $\{\mathbf{x}^n\}_{n\in\mathbb{N}}$, and $\{\mathbf{\bar{x}}^k\}_{k\in\mathbb{N}}$ be the sequences generated by Algorithm $\acronym$. Then,\medskip
\begin{enumerate} 
\item The sequence $\{\mathbf{\bar{x}}^k\}_{k\in\mathbb{N}}$ has finite length, that is, 
    \begin{equation*}
        \sum_{k=1}^\infty \vertiii{\mathbf{\bar{x}}^{k} -\mathbf{\bar{x}}^{k+1}} < \infty.
    \end{equation*}
    \item The sequence $\{\mathbf{\bar{x}}^k\}_{k\in\mathbb{N}}$ converges to a critical point $\widehat{\mathbf{x}}$ of $\Psi$.
    \end{enumerate}
\end{theorem}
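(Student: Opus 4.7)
The plan is to follow the standard KŁ-based argument of Attouch--Bolte--Svaiter and Bolte--Sabach--Teboulle, but adapted to the cycle-indexed sequence $\{\bar{\mathbf{x}}^k\}$. The main technical point will be to reconcile the sufficient decrease (which controls a sum of squared block-increments $C_k$ as in \eqref{eq:Ck}) with the subgradient bound \eqref{eq:bound_subgradient} (which controls a sum of un-squared block-increments $D_k$), via Cauchy--Schwarz: $D_k^2 \leq KL\,C_k$.

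I would first dispose of the trivial case in which $\Psi(\bar{\mathbf{x}}^{k_0}) = \Psi(\widehat{\mathbf{x}})$ for some $k_0$ and some $\widehat{\mathbf{x}} \in \lp(\bar{\mathbf{x}}^0)$; then by Proposition \ref{prop:decrease}(1) the sequence $\Psi(\bar{\mathbf{x}}^k)$ stabilises, the telescoped increments vanish, and hence $\bar{\mathbf{x}}^k = \bar{\mathbf{x}}^{k_0}$ for all $k \geq k_0$. In the nontrivial case, Lemma \ref{lm:limit_point_set} tells me that $\lp(\bar{\mathbf{x}}^0)$ is a nonempty compact set on which $\Psi$ is constant, equal to some value $\Psi^\star$, and that $\mathrm{dist}(\bar{\mathbf{x}}^k, \lp(\bar{\mathbf{x}}^0)) \to 0$. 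I apply the uniformised KŁ inequality (Lemma \ref{lm:unif_KL}) to a desingularising function $\varphi\in\Phi_\eta$ on a suitable $\epsilon$-neighbourhood of $\lp(\bar{\mathbf{x}}^0)$; for $k$ large enough, the iterates $\bar{\mathbf{x}}^k$ enter this neighbourhood and satisfy $\Psi^\star < \Psi(\bar{\mathbf{x}}^k) < \Psi^\star + \eta$, so
\begin{equation*}
\varphi'\bigl(\Psi(\bar{\mathbf{x}}^k) - \Psi^\star\bigr)\,\mathrm{dist}\bigl(0,\partial\Psi(\bar{\mathbf{x}}^k)\bigr) \geq 1.
\end{equation*}

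The core of the argument is then the usual three-ingredient combination. Set $\Delta_k := \varphi(\Psi(\bar{\mathbf{x}}^k)-\Psi^\star)$. Using concavity of $\varphi$ together with the sufficient decrease from Proposition \ref{prop:decrease}(1), there is a constant $c>0$ (uniform under the step-size bounds) such that
\begin{equation*}
\Delta_k - \Delta_{k+1} \;\geq\; \varphi'\bigl(\Psi(\bar{\mathbf{x}}^k)-\Psi^\star\bigr)\cdot\bigl(\Psi(\bar{\mathbf{x}}^k)-\Psi(\bar{\mathbf{x}}^{k+1})\bigr) \;\geq\; \varphi'\bigl(\Psi(\bar{\mathbf{x}}^k)-\Psi^\star\bigr)\, c\, C_k.
\end{equation*}
Plugging the subgradient bound \eqref{eq:bound_subgradient} (so that $\mathrm{dist}(0,\partial\Psi(\bar{\mathbf{x}}^k)) \leq \nu D_{k-1}$ with $\nu$ uniform) into the KŁ inequality and combining with $D_k^2 \leq KL\,C_k$ yields
\begin{equation*}
D_k^2 \;\leq\; \frac{KL\,\nu}{c}\, D_{k-1}\, (\Delta_k - \Delta_{k+1}).
\end{equation*}
Applying $2\sqrt{ab}\leq a+b$ gives $2D_k \leq D_{k-1} + (KL\nu/c)(\Delta_k - \Delta_{k+1})$, and telescoping from $k=k_0$ to $N$ (cancelling the $D_{k-1}$ contributions) produces
\begin{equation*}
\sum_{k=k_0}^{N} D_k \;\leq\; D_{k_0-1} + \frac{KL\,\nu}{c}\,\Delta_{k_0}.
\end{equation*}
Letting $N\to\infty$ and using Lemma \ref{lm:norm_bound} (which gives $\vertiii{\bar{\mathbf{x}}^{k+1}-\bar{\mathbf{x}}^k}\leq D_k$) proves assertion 1, the finite length.

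From finite length, $\{\bar{\mathbf{x}}^k\}$ is Cauchy in the finite-dimensional Hilbert space $\Hi$ and hence converges to some $\widehat{\mathbf{x}}$; by Lemma \ref{lm:limit_point_set}(1), $\widehat{\mathbf{x}} \in \mathrm{crit}\,\Psi$, giving assertion 2. The intermediate iterates $\mathbf{x}^n$ converge to the same limit because $\|\mathbf{x}^{n+1}-\mathbf{x}^n\|\to 0$ by Proposition \ref{prop:decrease}(1) and each cycle has fixed length $K$. The main obstacle I expect is the technical mismatch between the squared decrement in the decrease property and the linear bound on the subgradient, which must be handled via Cauchy--Schwarz and by ensuring the step-sizes $\tau_\ell^n$ are uniformly bounded away from $0$ and from $1/\beta_f^n$ so that the ratio $\nu/c$ stays bounded across cycles; this uniformity must be added (or made implicit) in Assumption \ref{ass:deterministic3}.
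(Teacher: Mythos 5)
Your proposal is correct and follows essentially the same route as the paper's proof: the same combination of the sufficient decrease (Proposition \ref{prop:decrease}(1)), the subgradient bound via $D_{k-1}$, the uniformized KŁ inequality with concavity of $\varphi$, the Cauchy--Schwarz step $D_k^2\leq m_k C_k$ (the paper uses $m_k\leq KL$ terms, matching your $KL$ constant), the $2\sqrt{ab}\leq a+b$ trick, and the telescoping that yields finite length and hence convergence to a critical point. Your closing remark about the step-sizes needing to be uniformly bounded away from $0$ and from $1/\beta_f^n$ is a fair observation — the paper absorbs this implicitly into the constants $\rho_1=\min_k\rho_k$ and $\rho_2=\max_k\nu_k$ — but it does not change the argument.
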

\begin{proof}
\textbf{1.} For this crucial result, we follow the path of \cite{bolte2014proximal}. Since the sequence $(\mathbf{\bar{x}}^k)_{k\in\mathbb{N}}$ is bounded, there exists a sub-sequence that converges to $\widehat{\mathbf{x}}$.
        \begin{itemize}
            \item As $\{\Psi(\mathbf{\bar{x}}^k)\}_{k \in \mathbb{N}}$ is a non-increasing sequence, and as the limit points set $\lp(\mathbf{x}^0)$ is such that $\lim_{k \rightarrow \infty} \mathrm{dist} (\mathbf{\bar{x}}^k,\lp(\mathbf{x}^0)) = 0$ (Lemma \ref{lm:limit_point_set}, point (ii)), there exist $k_0 \in \mathbb{N},\varepsilon >0, \eta >0$ such that for all $k > k_0$, $\mathbf{\bar{x}}^k$ belongs to:
            \begin{equation*}
                \left\{\mathbf{x} \in \RR^N: \textrm{dist} (\mathbf{x},\lp(\mathbf{x}^0)) < \varepsilon \right\} \cap [\Psi(\widehat{\mathbf{x}}) < \Psi(\mathbf{x}) < \Psi(\widehat{\mathbf{x}}) + \eta].
            \end{equation*}
            \item Using now that $\lp(\mathbf{x}^0)$ is nonempty and compact, and that $\Psi$ is constant on it (Lemma \ref{lm:limit_point_set} (ii), (iii) and (iv)), by applying Lemma \ref{lm:unif_KL}, we obtain
            \begin{equation}
               (\forall k>k_0), \quad \varphi'\left(\Psi(\mathbf{\bar{x}}^k)-\Psi(\widehat{\mathbf{x}})\right) \textrm{dist}\left(0,\partial \Psi (\mathbf{\bar{x}}^k)\right) \geq 1, \text{ for some }\varphi \in \Phi_\eta \label{eq:KL_for_FLEX}
            \end{equation}
            \item Let $k\in\mathbb{N}$, Proposition \ref{prop:decrease} gives an upper bound of $\textrm{dist}(0,\partial \Psi (\mathbf{\bar{x}}^k))$. Set $\rho_1 = \min_{k\in\mathbb{N}} \rho_k$ and $\rho_2 = \max_{k\in\mathbb{N}} \nu_k$. We have
            \begin{equation*}
                \textrm{dist}(0,\partial \Psi (\mathbf{\bar{x}}^k)) \leq  \rho_2 D_{k-1}
            \end{equation*}
            Combined with \eqref{eq:KL_for_FLEX}, this bound  implies
            \begin{align} \label{eq:upper_bound} \varphi'(\Psi(\mathbf{\bar{x}}^k)-\Psi(\widehat{\mathbf{x}}))  \geq \rho_2^{-1}D_{k-1}^{-1}.
            \end{align}
            \item The concavity of $\varphi$ yields that:
            \begin{equation}
                \varphi(\Psi(\mathbf{\bar{x}}^k)-\Psi(\widehat{\mathbf{x}})) - \varphi(\Psi(\mathbf{\bar{x}}^{k+1})-\Psi(\widehat{\mathbf{x}})) \geq \varphi'(\Psi(\mathbf{\bar{x}}^k)-\Psi(\widehat{\mathbf{x}})) \left(\Psi(\mathbf{\bar{x}}^k)-\Psi(\mathbf{\bar{x}}^{k+1})\right). \label{eq:concavity}
            \end{equation}
            \item Now recall from the proof of Proposition \ref{prop:decrease}, item 1., \eqref{eq:Ck} that
            \begin{equation*}
                \rho_1 C_k
                \leq \Psi(\mathbf{\bar{x}}^k)-\Psi(\mathbf{\bar{x}}^{k+1}).
            \end{equation*}
            Define  
            \begin{equation*}
                (\forall p,q \in \mathbb{N}), \quad \Delta_{p,q} : = \varphi(\Psi(\mathbf{\bar{x}}^p)-\Psi(\widehat{\mathbf{\bar{x}}})) - \varphi(\Psi(\mathbf{\bar{x}}^{q})-\Psi(\widehat{\mathbf{\bar{x}}})),
            \end{equation*}
            By setting $\rho:= \rho_1 \rho_2^{-1} >0$, we have from \eqref{eq:upper_bound} and \eqref{eq:concavity}
            \begin{equation*}
                \Delta_{k,k+1} \geq \frac{\rho~C_{k}}{D_{k-1}}
            \end{equation*}
            and then:
            \begin{align*}
                C_k \leq \rho^{-1}~\Delta_{k,k+1} D_{k-1}
            \end{align*}
            \item We rewrite the expression of both $C_k$ and $D_k$
            \begin{equation*}
                D_k = \sum_{j=1}^{m_k} a_{k,j}, \text{ and }  C_k = \sum_{j=1}^{m_k} a_{k,j}^2.
            \end{equation*}
            where $m_k = \sum_{n=k\cdot K}^{(k+1)} \textrm{card}(\mathbb{I}^n)$ and $a_{k,j} = \Vert x_\ell^n- x_\ell^{n+1} \Vert$ where $j$ browses through $n$ and $\ell$ by increasing order. Then,
            \begin{align*}
                D_k \leq \sqrt{m_k C_k}
            \end{align*}
            Using $2\sqrt{ab}\leq a + b$ for all $a,b\geq 0$, and writing $M = \max_k \sqrt{m_k}$ we get that
            \begin{equation*}
                2 D_k \leq M \rho^{-1} \Delta_{k,k+1} + D_{k-1}
            \end{equation*}
            We have then
            \begin{align*}
                    2 \sum_{i=k_0+1}^{k} D_i & \leq  \sum_{i=k_0+1}^{k} D_{i-1} + M \rho^{-1} \sum_{i=k_0+1}^k \Delta_{i,i+1} \nonumber \\
                &  = \sum_{i=k_0}^{k-1} D_{i} + M \rho^{-1} \sum_{i=k_0+1}^k \Delta_{i,i+1} \nonumber \\
                &  \leq \sum_{i=k_0+1}^{k} D_i + D_{k_0} + M \rho^{-1} \sum_{i=k_0+1}^k \Delta_{i,i+1} \nonumber \\
                \implies  \sum_{i=k_0+1}^{k} D_i & \leq D_{k_0} + M \rho^{-1} \sum_{i=k_0+1}^k \Delta_{i,i+1} \nonumber \\
                \implies  \sum_{i=k_0+1}^{k} D_i & \leq D_{k_0} + M \rho^{-1} \Delta_{k_0+1,k+1},
            \end{align*}
            the last line coming from the fact $\Delta_{p,q}+\Delta_{q,r} = \Delta_{p,r}$ for all $p,q,r\in\mathbb{N}$ \cite[Proof of theorem 3.1]{bolte2014proximal}. As $\varphi \geq 0$, we have that:
            \begin{equation*}
                \Delta_{k_0+1,k+1} = \varphi(\Psi(\mathbf{\bar{x}}^{k_0+1})-\Psi(\widehat{\mathbf{x}})) - \varphi(\Psi(\mathbf{\bar{x}}^{k+1})-\Psi(\widehat{\mathbf{x}})) \leq \varphi(\Psi(\mathbf{\bar{x}}^{k_0+1})-\Psi(\widehat{\mathbf{x}}))
            \end{equation*}
            Then as $D_k \geq \vertiii{\mathbf{\bar{x}}^{k}-\mathbf{\bar{x}}^{k+1}}$ (Lemma \ref{lm:norm_bound}), this allows us then to conclude then that $\{\mathbf{\bar{x}}^k\}_{k \in \mathbb{N}}$ has finite length:
            \begin{equation*}
                \sum_{k=1}^{\infty} \vertiii{\mathbf{\bar{x}}^{k}-\mathbf{\bar{x}}^{k+1}}<\infty.
            \end{equation*}
        \end{itemize}
\textbf{2.} The finite length of the sequence implies that it is a Cauchy sequence and hence a convergent sequence \cite[Proof of Theorem 3.1(ii)]{bolte2014proximal}.
\end{proof}
    We  splitted the proof of our main result into $4$ steps, which are common when studying descent algorithm on KŁ functions \cite{bolte2014proximal,attouch2009convergence,attouch2010proximal,xu2017globally,repetti2021variable}. For each step we detail how the existing proofs were adapted for our approach by highlighting the difference with the existing literature.%
\begin{enumerate} 
    \item \textbf{Sufficient decrease property:}  at each cycle, the objective function $\Psi$ is decreased. The decrease is controlled by the squared norm of the differences between the block updates. \textit{Difference with the literature:} we introduce the possibility of parallel updates to decrease the function, and thus to adapt the choice of step size to the smoothness of the group of blocks considered.
    \item \textbf{Subgradient upper bound:} at each cycle, we can exhibit an upper bound on one element of the subgradient of $\Psi$ at the cycle iterate. This upper bound is controlled by the norm of the differences between the block updates. \textit{Difference with the literature:} this bound is less sharp than in the literature (as the reader can see in the proof), but it is necessary to write it in this way to apply the KŁ property and obtain finite length. %
    \item \textbf{Limit points are critical points:} the set of limit points of the sequences generated by our algorithm will be a subset of the set of critical points of $\Psi$. \textit{Difference with the literature:} due to the possible parallel block updates, we need to be a bit more cautious when looking at the converging subsequences.
    \item \textbf{Finite length of the sequences:} the sequences generated by our algorithm have finite length and thus converge. This is a consequence of the KŁ property satisfied by $\Psi$ (see Lemma \ref{def:kl}) and of points (i) and (ii). \textit{Difference with the literature:} we invoke a particular instance of Cauchy-Schwartz inequality to obtain the desired result.
\end{enumerate}
\begin{remark} Regarding Theorem \ref{th:deterministic}, it may not appear obvious that algorithm $\acronym$ needs to update each block in an essentially cyclic manner, but this is indeed necessary, otherwise the norm of $\bar{B}^k\in\partial\Psi(\mathbf{\bar{x}}^k)$ would not go to $0$ as $k$ goes to infinity.

\end{remark}
\subsection{Some consequences.}
Our main result implies that the sequence of iterates $\{\mathbf{x}^n\}_{n\in\mathbb{N}}$ also converges, and to establish some convergence rates depending on the KŁ property.
\paragraph{Convergence of the sequence $\{\mathbf{x}^n\}_{n\in\mathbb{N}}$.} We have established the convergence of the sequence of cycle iterates $\{\mathbf{\bar x}^k\}_{n\in\mathbb{N}}$ to a critical point while decreasing the objective function. A direct consequence is that the sequence $\{\mathbf{x}^n\}_{n\in\mathbb{N}}$ converges to the same critical point.
\begin{proposition}
Suppose that Assumptions \ref{ass:deterministic1}, \ref{ass:deterministic2}, and \ref{ass:deterministic3} hold. Let $\{\mathbf{x}^n\}_{n\in\mathbb{N}}, \{\mathbf{\bar{x}}^k\}_{k\in\mathbb{N}}$ be generated by Algorithm $\acronym$. Let $\mathbf{\widehat{x}}\in$ crit $\Psi$ such that $\mathbf{\bar{x}}^k \to_{k \to +\infty} \mathbf{\widehat x}$. Then,
\begin{equation*}
    \lim_{n\rightarrow+\infty} \mathbf{x}^n = \mathbf{\widehat{x}}.
\end{equation*}
\end{proposition}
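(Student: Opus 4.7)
The plan is to relate any iterate $\mathbf{x}^n$ to the nearest preceding cycle iterate $\mathbf{\bar{x}}^k$ and then use the two ingredients already in hand: the hypothesis $\mathbf{\bar{x}}^k\to\widehat{\mathbf{x}}$, and the fact (established in Proposition \ref{prop:decrease}(1)) that $\vertiii{\mathbf{x}^{n+1}-\mathbf{x}^n}\to 0$ as $n\to+\infty$.

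First, for any $n\in\mathbb{N}$ I would write the Euclidean division $n=k\cdot K+r$ with $0\leq r<K$, so that $\mathbf{x}^{k\cdot K}=\mathbf{\bar{x}}^k$ by construction of the cycle iterates in \eqref{eq5:H_BC_FB}. Note that as $n\to+\infty$, necessarily $k\to+\infty$ (since $r$ is bounded by $K-1$). Then, applying the triangular inequality,
\begin{equation*}
\vertiii{\mathbf{x}^n-\widehat{\mathbf{x}}}\leq \vertiii{\mathbf{x}^n-\mathbf{\bar{x}}^k}+\vertiii{\mathbf{\bar{x}}^k-\widehat{\mathbf{x}}},
\end{equation*}
and the second term vanishes by the assumed convergence of the cycle subsequence.

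Second, I would control the first term via a finite telescoping sum of length at most $K$:
\begin{equation*}
\vertiii{\mathbf{x}^n-\mathbf{\bar{x}}^k}=\vertiii{\mathbf{x}^n-\mathbf{x}^{k\cdot K}}\leq \sum_{j=k\cdot K}^{n-1}\vertiii{\mathbf{x}^{j+1}-\mathbf{x}^j}\leq \sum_{j=k\cdot K}^{(k+1)\cdot K-1}\vertiii{\mathbf{x}^{j+1}-\mathbf{x}^j}.
\end{equation*}
Since $K$ is fixed and each of the $K$ summands on the right tends to $0$ as $k\to+\infty$ by Proposition \ref{prop:decrease}(1), the whole sum tends to $0$. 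Combining the two bounds yields $\lim_{n\to+\infty}\vertiii{\mathbf{x}^n-\widehat{\mathbf{x}}}=0$.

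There is no real obstacle here; the proof is essentially a uniform continuity argument on cycles of fixed length $K$, and both required ingredients are already delivered by Theorem \ref{th:deterministic} and Proposition \ref{prop:decrease}. The only point to be careful about is that the bound on $\vertiii{\mathbf{x}^n-\mathbf{x}^{k\cdot K}}$ must use a number of terms that does not grow with $n$, which is why the essentially cyclic structure (with fixed cycle length $K$) is essential.
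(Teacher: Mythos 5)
Your proposal is correct and follows essentially the same route as the paper: bound $\vertiii{\mathbf{x}^n-\mathbf{\bar{x}}^k}$ by a telescoping sum of at most $K$ increment norms (the paper phrases this block-wise as in Lemma \ref{lm:norm_bound}, which is equivalent), let it vanish via Proposition \ref{prop:decrease}(1), and conclude with the triangle inequality against $\vertiii{\mathbf{\bar{x}}^k-\widehat{\mathbf{x}}}\to 0$. Your remark that the number of summands must stay bounded by the fixed cycle length $K$ is exactly the point the paper's argument relies on.
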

\begin{proof}
    With a similar argument than in Lemma \ref{lm:norm_bound}, we have
    \begin{equation*}
        (\forall k \in \mathbb{N}), \forall n_0 \in \mathbb{N} \cap [k\cdot K, (k+1) \cdot K-1]), \quad \vertiii{\mathbf{x}^{n_0}-\mathbf{\bar{x}}^k} \leq \sum_{n= k \cdot K}^{n_0-1} \sum_{\ell \in \mathbb{I}^n} \Vert x_\ell^{n+1} - x_\ell^n \Vert. 
    \end{equation*}
    Combining the sufficient decrease property (Proposition \ref{prop:decrease}(1)) of our algorithm and the previous bound, we have
    \begin{equation*}
        (\forall n_0 \in \mathbb{N} \cap [k\cdot K, (k+1) \cdot K-1]), \quad \lim_{k \rightarrow +\infty} \vertiii{\mathbf{x}^{n_0}-\mathbf{\bar{x}}^k} = 0.
    \end{equation*}
    We showed that $\{\mathbf{\bar{x}}^k\}_{k\in\mathbb{N}}$ converges to $\mathbf{\widehat{x}}$, hence by definition
    \begin{equation*}
        \forall \xi_1 >0, \exists~k_1 \in \mathbb{N}, \text{ such that for all } k\geq k_1, \vertiii{\mathbf{\bar{x}}^k-\mathbf{\widehat{x}}} <\xi_1.
    \end{equation*}
    Now choose $\xi_2>0$ such that for all $k\geq k_2 := k_1$, $\vertiii{\mathbf{x}^{n_0}-\mathbf{\bar{x}}^k} < \xi_2$. We have
    \begin{align*}
        \vertiii{\mathbf{x}^{n_0}-\mathbf{\widehat{x}}} & = \vertiii{\mathbf{x}^{n_0}-\mathbf{\bar{x}}^k + \mathbf{\bar{x}}^k-\mathbf{\widehat{x}}} \nonumber \\ 
        & \leq \vertiii{\mathbf{x}^{n_0}-\mathbf{\bar{x}}^k} + \vertiii{\mathbf{\bar{x}}^k-\mathbf{\widehat{x}}} \nonumber\\
        & \leq \xi_1+\xi_2
    \end{align*}
    To prove that $\forall \xi>0$, there exists $N\in\mathbb{N}$, such that for all $n\geq N$, $\vertiii{\mathbf{x}^{n_0}-\mathbf{\widehat{x}}} < \xi$, we can take $\xi_1 = \xi_2 := \xi/2$ and take $N = \max\{k_1,k_2\} \cdot K$ to conclude.
\end{proof}
\paragraph{Convergence rates.} As developed in \cite{attouch2009convergence,bolte2014proximal}, the case in which all the functions involved are semi-algebraic is an interesting one for the study of the convergence. The desingularizing function $\varphi$ can be chosen to be of the form \cite{attouch2009convergence}
$
    \varphi(s) = cs^{1-\theta},
$
where $c>0$ and $\theta \in [0,1)$. Then, depending on the value of $\theta$, the following convergence rates hold: \cite{attouch2009convergence,bolte2014proximal}
\begin{enumerate}
    \item If $\theta = 0$  the sequence $(\bar{\mathbf{x}}^k)_{k\in\mathbb{N}}$ converges in a finite number of steps. \vspace{1em}
    \item If $\theta \in (0,1/2]$  there exist $\omega>0$ and $\kappa \in [0,1)$ such that $\vertiii{\bar{\mathbf{x}}^k-\widehat{\mathbf{x}}}\leq \omega \kappa^k$. \vspace{0.8em}
    \item If $\theta \in (1/2,1)$  there exists $\omega>0$ such that $\vertiii{\bar{\mathbf{x}}^k-\widehat{\mathbf{x}}}\leq \omega k^{-\frac{1-\theta}{2\theta -1}}$.
\end{enumerate}

\subsection{Convexity of the regularization}

We can derive a slightly different sufficient decrease property of our algorithm when assuming convexity of the regularizing functions $g_\ell$, for all $\ell$. This assumption allows us to take bigger step sizes when updating the blocks. \medskip

\begin{lemma}{\textbf{Sufficient decrease property: convexity of $g_\ell$.}} \label{lm:sufficient_decrease_convex} Suppose that Assumption \ref{ass:deterministic2} holds, and that for all $\ell \in \{1,\ldots,L\}$, $g_\ell$ is convex. Let $\{\mathbf{\bar{x}}^k\}_{k\in\mathbb{N}}$, and $\{\mathbf{x}^n\}_{n\in\mathbb{N}}$ be generated by algorithm $\acronym$.%
For each $n$, let $\beta_f^n = \sqrt{\sum_{j\in J_n,1\leq \ell \leq L} \varepsilon_j \beta_{\ell,j}^2}$ and $0<\tau_\ell^n < 2/\beta_f^n$.  Then 
\begin{align*}  \Psi(\mathbf{\bar{x}}^{k+1})+
        \left(\sum_{n=k\cdot K}^{(k+1) \cdot K-1} \sum_{\ell \in \mathbb{I}^n} \left(\frac{1}{\tau_\ell^n}-\frac{\beta_f^n}{2} \right)\Vert x_\ell^n-x_\ell^{n+1} \Vert^2 \right) &\leq
        \Psi(\mathbf{\bar{x}}^k).
        \end{align*}
\end{lemma}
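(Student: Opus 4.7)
The plan is to essentially repeat the proof of Proposition \ref{prop:decrease}(1), replacing only the descent inequality coming from the proximal step by a sharper one that exploits convexity of each $g_\ell$. In the non-convex case, the inequality
\[
g_\ell(x_\ell^{n+1}) + \frac{1}{2\tau_\ell^n}\|x_\ell^n - x_\ell^{n+1}\|^2 \leq g_\ell(x_\ell^n) + \langle \nabla_\ell f(\mathbf{x}^n), x_\ell^n - x_\ell^{n+1}\rangle
\]
was obtained by using $x_\ell^n$ as a candidate in the minimization defining the prox. Under convexity one gains a factor of two, which ultimately permits step sizes up to $2/\beta_f^n$ instead of $1/\beta_f^n$.

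First I would derive the sharper descent for each updated block. For $\ell \in \mathbb{I}^n$, the optimality condition \eqref{eq:subgradient_prox} for $x_\ell^{n+1} \in \prox_{\tau_\ell^n g_\ell}(x_\ell^n - \tau_\ell^n \nabla_\ell f(\mathbf{x}^n))$ gives
\[
\frac{x_\ell^n - x_\ell^{n+1}}{\tau_\ell^n} - \nabla_\ell f(\mathbf{x}^n) \in \partial g_\ell(x_\ell^{n+1}).
\]
Applying the convex subgradient inequality for $g_\ell$ at $x_\ell^{n+1}$ evaluated at $x_\ell^n$ and rearranging yields
\[
g_\ell(x_\ell^{n+1}) + \frac{1}{\tau_\ell^n}\|x_\ell^n - x_\ell^{n+1}\|^2 \leq g_\ell(x_\ell^n) + \langle \nabla_\ell f(\mathbf{x}^n), x_\ell^n - x_\ell^{n+1}\rangle,
\]
which is the same inequality as before but with the constant $1/\tau_\ell^n$ instead of $1/(2\tau_\ell^n)$.

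Next I would sum this inequality over $\ell \in \mathbb{I}^n$ and combine it, exactly as in the proof of Proposition \ref{prop:decrease}(1), with the multi-block descent lemma obtained from Assumption \ref{ass:deterministic2}(A4) via Proposition \ref{prop:multi_block_smoothness} applied to the support $\mathbb{I}^n$:
\[
\sum_{\ell \in \mathbb{I}^n} \langle \nabla_\ell f(\mathbf{x}^n), x_\ell^n - x_\ell^{n+1}\rangle \leq f(\mathbf{x}^n) - f(\mathbf{x}^{n+1}) + \frac{\beta_f^n}{2}\sum_{\ell \in \mathbb{I}^n}\|x_\ell^n - x_\ell^{n+1}\|^2.
\]
Adding $\sum_{\ell \notin \mathbb{I}^n} g_\ell(x_\ell^n) = \sum_{\ell \notin \mathbb{I}^n} g_\ell(x_\ell^{n+1})$ to both sides reassembles $\Psi$ and produces the one-step inequality
\[
\Psi(\mathbf{x}^{n+1}) + \sum_{\ell \in \mathbb{I}^n}\Bigl(\frac{1}{\tau_\ell^n} - \frac{\beta_f^n}{2}\Bigr)\|x_\ell^n - x_\ell^{n+1}\|^2 \leq \Psi(\mathbf{x}^n),
\]
whose coefficient is nonnegative precisely when $\tau_\ell^n < 2/\beta_f^n$.

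Finally, telescoping this inequality over $n = k\cdot K, \ldots, (k+1)\cdot K - 1$ and using $\mathbf{x}^{k\cdot K} = \mathbf{\bar x}^k$, $\mathbf{x}^{(k+1)\cdot K} = \mathbf{\bar x}^{k+1}$ gives the claimed cycle-level bound. There is no real obstacle here: the only substantive input is the convex-subgradient trick in the first step, and everything else is a verbatim transcription of the argument used for Proposition \ref{prop:decrease}(1), with the coefficient $\tfrac{1}{2}(1/\tau_\ell^n - \beta_f^n)$ replaced by $1/\tau_\ell^n - \beta_f^n/2$ throughout.
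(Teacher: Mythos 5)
Your proposal is correct and follows essentially the same route as the paper: the only new ingredient is the sharper prox descent inequality $g_\ell(x_\ell^{n+1}) + \tfrac{1}{\tau_\ell^n}\|x_\ell^n - x_\ell^{n+1}\|^2 \leq g_\ell(x_\ell^n) + \langle \nabla_\ell f(\mathbf{x}^n), x_\ell^n - x_\ell^{n+1}\rangle$, which you obtain from the convex subgradient inequality and the paper obtains from the first-order optimality conditions of the prox, and the rest is the verbatim telescoping argument of Proposition~\ref{prop:decrease}(1). Your explicit derivation of that inequality via \eqref{eq:subgradient_prox} is in fact slightly more detailed than the paper's one-line assertion.
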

\begin{proof}
Due to the convexity of $g_\ell$ for all $\ell$, the associated proximity operator is single-valued.
For all $\ell \in \mathbb{I}^n$, the first order optimality conditions of the proximity operator \eqref{eq:prox_equation} yield:
\begin{equation}\label{eq5:first_order_suff_decrease_convex}
    g_\ell(x_\ell^{n+1}) + \frac{1}{\tau_\ell^n} \Vert x_\ell^n-x_\ell^{n+1} \Vert^2 \leq g(x_\ell^n) + \langle \nabla_\ell f(\mathbf{x}^{n}), x_\ell^n-x_\ell^{n+1} \rangle.
\end{equation}
The subtle difference with the non-convex case is the factor dividing $\Vert x_\ell^n-x_\ell^{n+1} \Vert^2$. With the same derivation as in the non-convex case (proof of point (i) in Theorem \ref{th:deterministic}), we obtain finally that:
\begin{align*}  \Psi(\mathbf{\bar{x}}^{k+1})+
    \left(\sum_{n=k\cdot K}^{(k+1) \cdot K-1} \sum_{\ell \in \mathbb{I}^n} \left(\frac{1}{\tau_\ell^n}-\frac{\beta_f^n}{2} \right)\Vert x_\ell^n-x_\ell^{n+1} \Vert^2 \right) &\leq
    \Psi(\mathbf{\bar{x}}^k).
\end{align*}
\end{proof}

\section{Multilevel forward-backward as a flexible block-coordinate forward-backward}
\label{sec5:motivating_example}

We have seen how to construct a convergent parallel and essentially cyclic block-coordinate forward-backward algorithm, able to handle non-convexity and non-smoothness of the objective function. This algorithm is deterministic by essence, even though a random shuffle of the order of the updates is possible.

Our algorithm is compatible with several update rules. In particular, $\acronym$ can use a hierarchical update rule, to update blocks given their position in a hierarchy inherited from the function to optimize. Such hierarchy arises for instance in image restoration problems when the regularization penalizes wavelet coefficients of the image. Such hierarchical update rule has a direct connection with multilevel algorithms.

We start by recalling some key facts about multiresolution analysis to introduce the rigorous use of wavelets made in this chapter; then present the construction of the block algorithm for two blocks; and then discuss the construction of the multilevel algorithm for two levels. This presentation will allow us to highlight that the two approaches can be rigorously equivalent in this setting. This equivalence opens practical applications for BCD methods and theoretical insights for multilevel methods.  %
\subsection{Multiresolution analysis in a nutshell} 
Let $\mathbf{u} \in L_2(\Omega)$ be an image, where $\Omega \subset \RR^2$. We decompose $L_2(\Omega)$ into the sum of the space of approximation coefficients $V_J$, and that of detail coefficients $V_J^\perp$ at resolution $J \in \mathbb{N}$ \cite[Chapter 7]{mallat1999wavelet}, i.e.,
\begin{equation*}
    L_2(\Omega) = V_J \oplus V_J^\perp.
\end{equation*}
We assume that $\mathbf{u}$ lives exclusively in $V_J$ in the following, i.e., $\mathbf{u}$ is a discrete image of ${2^{2^J}}$ pixels. $V_J$ can be decomposed into subspaces $V_{J-1}$ and $W_{J-1}$, where $V_{J-1}$ is the space of approximation coefficients at resolution $J-1$ and $W_{J-1}$ the space of detail coefficients at resolution $J-1$. More precisely, 
\begin{equation*}
    \mathbf{u} = \Pi^*_{V_{J-1}} a_{J-1} + \Pi^*_{W_{J-1}} d_{J-1},
\end{equation*}
where 
$a_{J-1}$ (resp. $d_{J-1}$) are the approximation (resp. detail) coefficients at resolution $J-1$ and $\Pi_{V_{J-1}}$ (resp. $\Pi_{W_{J-1}}$) is the linear projection operator onto $V_{J-1}$ (resp. $W_{J-1}$). By definition, $\Pi_{V_{J-1}} \Pi^*_{V_{J-1}} = \Id_{V_{J-1}}$ and $\Pi_{W_{J-1}} \Pi^*_{W_{J-1}} = \Id_{W_{J-1}}$, where $\Id_{V_{J-1}}$ and $\Id_{W_{J-1}}$ are the identity operators on $V_{J-1}$ and $W_{J-1}$, respectively. Note that the block $d$ contains the three groups of detail coefficients \cite{mallat1999wavelet}.
\subsection{Wavelet deblurring with multilevel FB: a hierarchical block algorithm}
Our $2^{2^J}$ pixels image $\mathbf{u}$ is decomposed into two independent components $a_{J-1} \in V_{J-1} $, and $d_{J-1} \in W_{J-1}$. In this setting the blocks $(x_1,x_2)$ are equal to $(a_{J-1},d_{J-1})$. We drop the index $J-1$ in the following for simplicity, so that $(x_1,x_2)=(a,d) \in V \times W$. 
\begin{figure}
    \centering
    \includegraphics[width=0.3\textwidth]{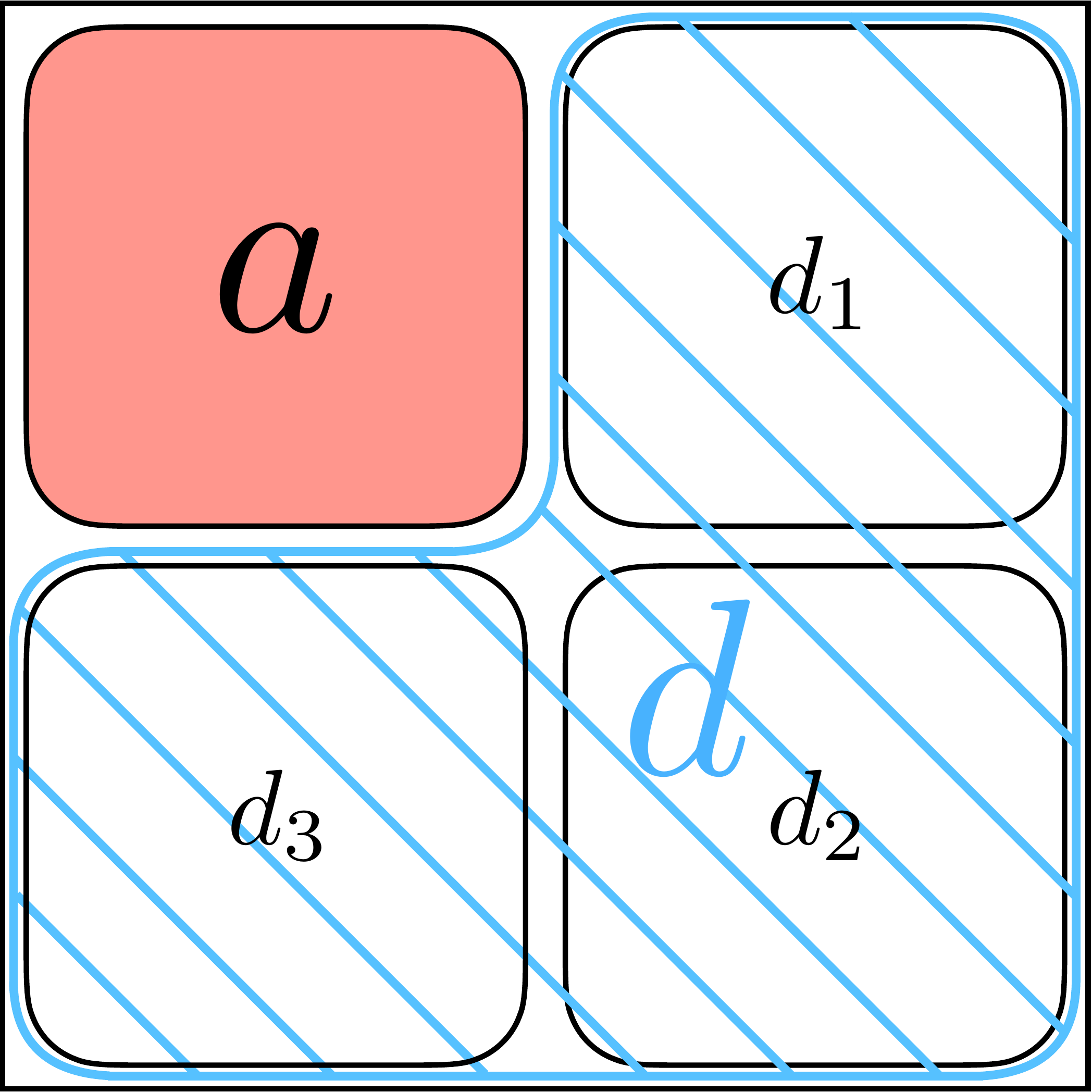}
    \caption{Decomposition on two levels of an image $\mathbf{u}$ with a wavelet transform. We regroup the detail coefficients $d_1,d_2$ and $d_3$ into one single block {\color{lightblue}$d$} to simplify the presentation of our two level or two block proximal gradient descent algorithms. }
    \label{fig5:two_level_wavelet}
\end{figure}
We aim at solving the following optimization problem:
\begin{equation}
    \Argmin_{\mathbf{u} \in V_J} F(\mathbf{u}) = \frac{1}{2} \Vert \A \mathbf{u} -\mathbf{z} \Vert_2^2 +\lambda g(\D \mathbf{u}), \label{eq5:wavelet_optim_2L}
\end{equation} 
where $\A:V_J \to V_J$ is a bounded linear operator modelling a blur, $\mathbf{z}$ is a degraded version of $\mathbf{u}$, $\D$ is the wavelet transform of $\mathbf{u}$ on two levels, and $\lambda$ is a multi-valued parameter to penalize differently the approximation and detail coefficients. We also assume that $g$ is separable along $a$ ($g_a$) and $d$ ($g_d$), and proximable, hence $g\circ\D$ is also proximable. %
We can rewrite this classical wavelet penalized least-squares problem using the wavelet decomposition of $\mathbf{u}$, i.e., the synthesis formulation
\begin{align} \label{eq5:wavelet_optim_2B}
    \left(\widehat{a},\widehat{d}\right) \in \Argmin_{a\in V, d\in W}\Psi(a,d) & =\frac{1}{2} \Vert \A\left(\Pi_V^* a + \Pi_W^* d\right) -\mathbf{z}\Vert^2 + \lambda_a g_a(a) + \lambda_d g_d(d).
\end{align}
To find a minimizer of $\Psi$ with respect to ($a$,$d$) is  equivalent to finding a minimizer of $F$ with respect to $\mathbf{u}$, as we can recover the solution $\mathbf{\widehat{u}}$ of Problem \eqref{eq5:wavelet_optim_2L} from the solution $(\widehat{a},\widehat{d})$ of Problem \eqref{eq5:wavelet_optim_2B}, by setting
\begin{equation*} 
    \widehat{\mathbf{u}} = \Pi_V^* \widehat{a} + \Pi_W^* \widehat{d}.
\end{equation*}
In the following, note that $D = \left[ \begin{array}{c}
    \Pi_V  \\
    \Pi_W  
\end{array}\right]$.
\paragraph{Two block-coordinate proximal gradient descent.}
To minimize $\Psi$, we first consider the block-coordinate approach. Given $(a^0,d^0)\in V \times W$, and $0<\tau< 2/ \Vert \A^* \A \Vert$, the iterations read 
\begin{equation}\label{eq5:example_BC_FB}
    \begin{array}{l}
    \text{for } n=0,1,\dots \\ 
    \left\lfloor
    \begin{array}{l}
        a^{n+1} = a^n + \varepsilon_{a,n} \left(\prox_{\tau \lambda_a g_a} \left(a^n - \tau \Pi_V \A^* \left[\A\left(\Pi_V^* a^n + \Pi_W^* d^n \right) -\mathbf{z} \right]\right) -a^n \right)\\
        d^{n+1} = d^n + \varepsilon_{d,n} \left(\prox_{\tau \lambda_d g_d} \left( d^n - \tau \Pi_W \A^* \left[ \A \left(\Pi_V^* a^n + \Pi_W^* d^n\right) -\mathbf{z} \right] \right) -d^n \right)
    \end{array}
    \right.
    \end{array}
\end{equation}
where for every $n\in\mathbb{N}$, $(\varepsilon_{a,n},\varepsilon_{d,n}) \in \{0,1\}^2$. Note that if $(\varepsilon_{a,n},\varepsilon_{d,n})=(1,1)$ for all $n$, \eqref{eq5:example_BC_FB} reduces to the standard forward-backward algorithm. A cyclic coordinate descent algorithm is obtained by setting alternatively one of $\varepsilon_{a,n},\varepsilon_{d,n}$ to $1$. This could also be set at random, provided that $\mathsf{P}\left[(\varepsilon_{a,n},\varepsilon_{d,n})=(0,0)\right] = 0$.

\paragraph{Two-level proximal gradient descent.}
We present now the construction of a two-level proximal algorithm to minimize $\Psi$ (the fine level function in the multilevel terminology \cite{lauga_iml_2023}). We will denote by $\Psi_H$ an approximation of $\Psi$, deemed the coarse level function. Given the structure of the problem, it is natural to define $\Psi_H$ in the approximation space $V$. Consequently, and following \cite{lauga_iml_2023}, the information transfer operator $\IhH $ (that sends information from the fine level to the coarse level) is the restriction $R_V$ onto $V$ (i.e., $R_V(a,d) = a$) and the prolongation operator $\IHh$ (that sends information from the coarse level to the fine level) is directly $R_V^*$ (i.e., $R_V^* a = (a,0)$).

By setting $\A_H = \A \Pi_V^*$, the coarse model is chosen as:
\begin{equation}\label{eq5:example_coarse_model}
    \Psi_H(a) = \frac{1}{2} \Vert \A_H a - \Pi_V^* \Pi_V \mathbf{z} \Vert_2^2 + \lambda_a g_a(a) + \langle v_H, a \rangle,
\end{equation}
where $v_H$ enforces the first order coherence \cite[Definition 2.1]{lauga_iml_2023} between two smoothed versions $\Psi_\mu$ and $\Psi_{H,\mu}$ of $\Psi$ and $\Psi_H$ respectively \cite[Definition 2.4]{lauga_iml_2023}, with parameter $\mu>0$ (i.e., the smoothed coarse level function to be the first order Taylor approximation of the smoothed fine level function):
\begin{equation*}
    v_H = R_V \nabla \Psi_\mu(a^n,d^n) - \nabla \Psi_{H,\mu}(a^n),
\end{equation*}
with $\Psi_{H,\mu}(\cdot)=\Vert \A_H \cdot - \Pi_V^* \Pi_V \mathbf{z}\Vert_2^2 +g_{a,\mu}(\cdot),$ and $\Psi_\mu(\cdot) = \Vert \A \cdot - \mathbf{z} \Vert_2^2 + g_{a,\mu}(\cdot) +g_{d,\mu}(\cdot)$, where we have denoted $g_{\cdot,\mu}$ the $\mu>0$-smoothed $\ell_1$-norm (according to the principles of \cite{beck2012}).
To go from one level to the other, multilevel algorithm employ information transfer operators. 

In the following, we assume that we compute only one coarse iteration before going back to the fine level, but everything holds trivially for more coarse iterations. This iteration will yield $a^{n+1/2}$ from $a^n$. The coarse level model being non-smooth, we will use a proximal gradient step to optimize it. Accordingly, the two-level proximal gradient algorithm, starting from $(a_0,d_0)\in V \times W$, is
\begin{equation}\label{eq5:example_multilevel}
    \begin{array}{l}
    \text{for } n=0,1,\dots \\ 
    \left\lfloor
    \begin{array}{l}
        a^{n+1/2} = \prox_{\tau \lambda_a g_a} \left(a^n - \tau \A_H^* \left( \A_H a^n - \Pi_V^* \Pi_V \mathbf{z} \right) - \tau v_H \right) \\
        a^{n+1} = \prox_{\tau \lambda_a g_a} \left(a^{n+1/2} - \tau \Pi_V \A^* \left(\A\left(\Pi_V^* a^{n+1/2} + \Pi_W^* d^n \right) -\mathbf{z} \right)\right)\\
        d^{n+1} = \prox_{\tau \lambda_d g_d} \left( d^n - \tau \Pi_W \A^* \left( \A \left(\Pi_V^* a^{n+1/2} + \Pi_W^* d^n\right) -\mathbf{z} \right) \right) \\
    \end{array}
    \right.
    \end{array}
\end{equation}
The fact that Algorithms \eqref{eq5:example_BC_FB} and \eqref{eq5:example_multilevel} are the same algorithm is not obvious at first sight. We will show in the following that this is indeed the case. After summarizing our assumptions, we will compute the first order coherence term explicitly.\medskip
\begin{assumption} \label{ass5:simple_example}
    We assume that:
    \begin{enumerate}[label=(\roman*)]
        \item the information transfer operator is the restriction $R_V$ onto $V$;
        \item in the definition of $v_H$, the fine and coarse models are smoothed with the same smoothing technique, with the same smoothing parameter $\mu>0$;
        \item $\Psi$ and $\Psi_H$ are first order coherent with respect to their smoothed versions  \cite[Definition 2.1]{lauga_iml_2023}.
    \end{enumerate}
\end{assumption} \medskip

\begin{lemma} \label{lm:FOC}
Suppose that Assumption \ref{ass5:simple_example} holds. The first order coherence term $v_H$ in \eqref{eq5:example_coarse_model} at $(a^n,d^n)$ is given by:
\begin{equation}\label{eq5:proof_lemma_FOC_example_1}
    v_H = \Pi_V \A^* \left(\A  \Pi_W^* d^n - \Pi_W^* \Pi_W \mathbf{z} \right).
\end{equation}
The first order coherence sends the contribution of the detail coefficients to the gradient of the data fidelity term to the coarse level.
\end{lemma}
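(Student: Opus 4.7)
The plan is to unpack the definition of $v_H$ coming from the first-order coherence condition and reduce it by a direct computation. By Assumption \ref{ass5:simple_example}(ii)-(iii), $v_H$ is the unique vector in $V$ such that
\begin{equation*}
v_H = R_V \nabla \Psi_\mu(a^n,d^n) - \nabla \Psi_{H,\mu}(a^n),
\end{equation*}
where the smoothed fine and coarse models are obtained by applying the same smoothing to the $\ell_1$-type terms $g_a$ and $g_d$, namely $\Psi_\mu(a,d)=\tfrac12\|\mathcal{A}(\Pi_V^*a+\Pi_W^*d)-\mathbf{z}\|^2+\lambda_a g_{a,\mu}(a)+\lambda_d g_{d,\mu}(d)$ and $\Psi_{H,\mu}(a)=\tfrac12\|\mathcal{A}_H a-\Pi_V^*\Pi_V\mathbf{z}\|^2+\lambda_a g_{a,\mu}(a)$.

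First, I would compute the two gradients. Since $R_V(a,d)=a$, selecting the $V$-block of $\nabla \Psi_\mu$ gives
\begin{equation*}
R_V \nabla \Psi_\mu(a^n,d^n)=\Pi_V \mathcal{A}^*\bigl(\mathcal{A}(\Pi_V^*a^n+\Pi_W^*d^n)-\mathbf{z}\bigr)+\lambda_a \nabla g_{a,\mu}(a^n).
\end{equation*}
For the coarse model, I would use $\mathcal{A}_H=\mathcal{A}\Pi_V^*$, whence $\mathcal{A}_H^*=\Pi_V \mathcal{A}^*$, to obtain
\begin{equation*}
\nabla \Psi_{H,\mu}(a^n)=\Pi_V \mathcal{A}^*\bigl(\mathcal{A}\Pi_V^*a^n-\Pi_V^*\Pi_V\mathbf{z}\bigr)+\lambda_a \nabla g_{a,\mu}(a^n).
\end{equation*}

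Subtracting, the two $\lambda_a \nabla g_{a,\mu}(a^n)$ contributions cancel exactly (this is where Assumption \ref{ass5:simple_example}(ii) is essential: using the same smoothing on both levels makes the nonsmooth terms match), so that
\begin{equation*}
v_H=\Pi_V \mathcal{A}^*\bigl(\mathcal{A}\Pi_W^* d^n-\mathbf{z}+\Pi_V^*\Pi_V\mathbf{z}\bigr).
\end{equation*}

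The final step is to rewrite $\mathbf{z}-\Pi_V^*\Pi_V\mathbf{z}$ using the orthogonal decomposition $V_J=V\oplus W$. Since $\Pi_V^*\Pi_V$ and $\Pi_W^*\Pi_W$ are the orthogonal projectors onto $V$ and $W$ respectively, one has $\mathrm{Id}_{V_J}=\Pi_V^*\Pi_V+\Pi_W^*\Pi_W$, hence $\mathbf{z}-\Pi_V^*\Pi_V\mathbf{z}=\Pi_W^*\Pi_W\mathbf{z}$. Plugging this identity into the previous display yields exactly \eqref{eq5:proof_lemma_FOC_example_1}. The only conceptually subtle point is realizing that Assumption \ref{ass5:simple_example}(ii) is what kills the nonsmooth gradient terms; the rest is a direct algebraic manipulation using $\mathcal{A}_H=\mathcal{A}\Pi_V^*$ and the orthogonal splitting of $V_J$.
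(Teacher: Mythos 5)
Your proposal is correct and follows essentially the same route as the paper's own proof: compute $\nabla\Psi_{H,\mu}$ and the $V$-block of $\nabla\Psi_\mu$, cancel the common $\lambda_a\nabla g_{a,\mu}(a^n)$ term, and finish with the identity $\mathbf{z}-\Pi_V^*\Pi_V\mathbf{z}=\Pi_W^*\Pi_W\mathbf{z}$ coming from the orthogonal splitting $V\oplus W$. No gaps; your added emphasis on why Assumption~\ref{ass5:simple_example}(ii) is what makes the nonsmooth contributions cancel is a fair and accurate gloss on the same argument.
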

\begin{proof}
    By definition of first order coherence between smoothed functions \cite[Definition 2.1]{lauga_iml_2023}, we have:
    \begin{equation}\label{eq5:proof_lemma_FOC_example_2}
        v_H = R_V \nabla \Psi_\mu(a^n,d^n) - \nabla \Psi_{H,\mu}(a^n).
    \end{equation}
    The second term is the simple computation of the gradient of the coarse model:
    \begin{align}
        \nabla \Psi_{H,\mu}(a^n) & = \nabla \left( \frac{1}{2} \Vert \A_H a^n -\Pi_V^*\Pi_V \mathbf{z} \Vert_2^2 + \lambda_a g_{a,\mu}(a^n) \right) \nonumber \\
        & = \A_H^* \left( \A_H a^n - \Pi_V^*\Pi_V \mathbf{z} \right) + \lambda_a \nabla_{a} g_{a,\mu} (a^n) \nonumber \\
        & = \Pi_V \A^* \left( \A\Pi_V^* a^n - \Pi_V^*\Pi_V \mathbf{z} \right) + \lambda_a \nabla_{a} g_{a,\mu}(a^n).
    \end{align}
    On the other hand,
    \begin{align}
        \nabla \Psi_\mu(a^n,d^n) & = \nabla \left( \frac{1}{2} \Vert \A \left( \Pi_V^* a^n + \Pi_W^* d^n \right) - z \Vert_2^2 + \lambda_a g_{a,\mu}(a^n) + \lambda_d g_{d,\mu}(d^n) \right) \nonumber\\
        & =  \begin{bmatrix} \Pi_V \A^* \left( \A \left( \Pi_V^* a^n + \Pi_W^* d^n \right) - z \right) + \lambda_a \nabla_{a} g_{a,\mu} (a^n) \\ \Pi_W \A^* \left( \A \left( \Pi_V^* a^n + \Pi_W^* d^n \right) - z \right) + \lambda_d \nabla_{d} g_{d,\mu} (d^n) \end{bmatrix}, \nonumber
        %& = \A^* \left( \A \left( \Pi_V^* a^n + \Pi_W^* d^n \right) - z \right) + \lambda_a \nabla_{a} \left(\Vert \cdot \Vert_{1,\mu} \right)(a^n) + \lambda_d \nabla_{d} \left(\Vert \cdot \Vert_{1,\mu} \right)(d^n) \nonumber
    \end{align}
    and thus:
    \begin{align*}
        v_H & = R_V \left(\begin{bmatrix} \Pi_V \A^* \left( \A \left( \Pi_V^* a^n + \Pi_W^* d^n \right) - \mathbf{z} \right) + \lambda_a \nabla_{a} g_{a,\mu} (a^n) \\ \Pi_W \A^* \left( \A \left( \Pi_V^* a^n + \Pi_W^* d^n \right) - \mathbf{z} \right) + \lambda_d \nabla_{d} g_{d,\mu} (d^n) \end{bmatrix} \right)\\
        & - \Pi_V \A^* \left( \A\Pi_V^* a^n - \Pi_V^*\Pi_V \mathbf{z} \right) + \lambda_a \nabla_{a} g_{a,\mu} (a^n).
    \end{align*}

    Since $V$ and $W$ are orthogonal to each other, restricting any element of $W$ on $V$ yields $0$ and thus:
    \begin{equation*}
        v_H = \Pi_V \A^* \left(\A  \Pi_W^* d^n - \Pi_W^* \Pi_W \mathbf{z} \right),
    \end{equation*}
    where we used that 
    \begin{equation*}
        \mathbf{z} - \Pi_V^* \Pi_V \mathbf{z} = \Pi_W^* \Pi_W \mathbf{z}.
    \end{equation*}
\end{proof}
Based on Lemma \ref{lm:FOC}, the proximal gradient step at coarse level at iteration $n$ reads
\begin{align}
    a^{n+1/2} & = \prox_{\tau \lambda_a g_a} \left(a^n - \tau \A_H^* \left( \A_H a^n - \Pi_V^* \Pi_V \mathbf{z} \right) - \tau v_H \right) \nonumber \\
    & = \prox_{\tau \lambda_a g_a} \left(a^n - \tau \Pi_V \A^* \left( \A \Pi_V^* a^n  -\mathbf{z}  + \A \Pi_W^* d^n \right) \right) \nonumber \\
    & = \prox_{\tau \lambda_a g_a} \left(a^n - \tau \Pi_V \A^* \left( \A \left(\Pi_V^* a^n + \Pi_W^* d^n \right) -\mathbf{z} \right) \right),
\end{align}
which fits a block-coordinate update on $a$.
\begin{figure}
    \centering
    \includegraphics[trim={21em 45em 90em 20em},clip,width=0.6\textwidth]{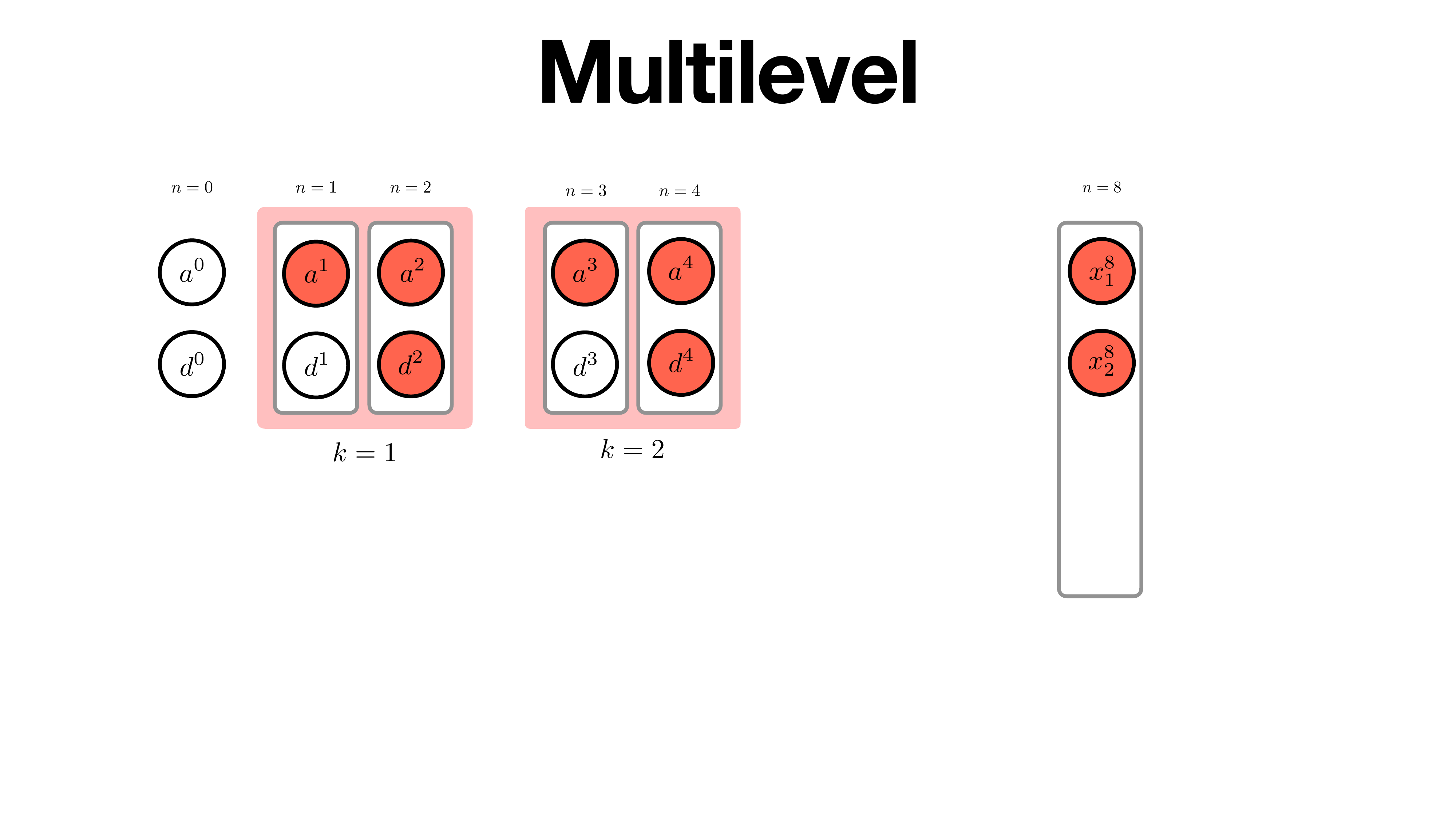}
    \caption{Update scheme of the two block-coordinate descent algorithm. The blocks are updated in a cyclic fashion, first with the approximation block updated alone ($a^1$ in \textcolor{red}{red}), then the approximation and detail blocks updated together ($a^2$ and $d^2$ in \textcolor{red}{red}). We represent two cycles in this figure $k=1$ and $k=2$ for a total of $n=4$ iterations. 
    }
    \label{fig5:two_block_wavelet}
\end{figure}
We summarize the consequence of this result in the following proposition: \medskip
\begin{proposition} \label{lemma5:equivalence_example}
The two-level algorithm defined in \eqref{eq5:example_multilevel} is equivalent to the block-coordinate algorithm defined in \eqref{eq5:example_BC_FB} when choosing $\varepsilon_{a,2n} = 1$ and $\varepsilon_{d,2n} = 0$, then $\varepsilon_{a,2n+1} = 1$ and $\varepsilon_{d,2n+1} = 1$ for all $n \in \mathbb{N}$ and the same initial vector $(a^0,d^0) \in V \times W$.
\end{proposition}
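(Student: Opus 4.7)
The plan is to proceed by induction on $n$, matching the iterates of the two-level scheme \eqref{eq5:example_multilevel} with those of the block-coordinate scheme \eqref{eq5:example_BC_FB} under the prescribed activation pattern. Denote by $(a^n_{\mathrm{ML}}, d^n_{\mathrm{ML}})$ and $(a^{n+1/2}_{\mathrm{ML}})$ the iterates produced by \eqref{eq5:example_multilevel}, and by $(a^n_{\mathrm{BC}}, d^n_{\mathrm{BC}})$ those produced by \eqref{eq5:example_BC_FB} under the choice $\varepsilon_{a,2n}=1$, $\varepsilon_{d,2n}=0$, $\varepsilon_{a,2n+1}=\varepsilon_{d,2n+1}=1$. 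The claim I want to prove is that for every $n\in\mathbb{N}$,
\begin{equation*}
    (a^{2n}_{\mathrm{BC}}, d^{2n}_{\mathrm{BC}}) = (a^n_{\mathrm{ML}}, d^n_{\mathrm{ML}}), \qquad a^{2n+1}_{\mathrm{BC}} = a^{n+1/2}_{\mathrm{ML}}, \qquad d^{2n+1}_{\mathrm{BC}} = d^n_{\mathrm{ML}},
\end{equation*}
starting from the common initialization $(a^0,d^0)\in V\times W$.

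The base case is immediate from the initialization. For the inductive step, I would first handle the even-to-odd transition $2n\to 2n+1$: by construction, only the approximation block $a$ is activated in \eqref{eq5:example_BC_FB}, so $d^{2n+1}_{\mathrm{BC}} = d^{2n}_{\mathrm{BC}} = d^n_{\mathrm{ML}}$ and
\begin{equation*}
    a^{2n+1}_{\mathrm{BC}} = \prox_{\tau \lambda_a g_a}\!\left(a^n_{\mathrm{ML}} - \tau\,\Pi_V \A^*\!\left[\A\!\left(\Pi_V^* a^n_{\mathrm{ML}} + \Pi_W^* d^n_{\mathrm{ML}}\right) - \mathbf{z}\right]\right).
\end{equation*}
I then invoke Lemma \ref{lm:FOC}, together with the simplification of the coarse proximal-gradient step performed just after its proof (ending with the display right before the statement of Proposition \ref{lemma5:equivalence_example}), which rewrites the coarse step of \eqref{eq5:example_multilevel} as exactly the right-hand side above. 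This yields $a^{2n+1}_{\mathrm{BC}} = a^{n+1/2}_{\mathrm{ML}}$, closing the first half of the induction.

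For the odd-to-even transition $2n+1\to 2n+2$, both blocks are activated in \eqref{eq5:example_BC_FB}, so the update reads
\begin{align*}
    a^{2n+2}_{\mathrm{BC}} &= \prox_{\tau \lambda_a g_a}\!\left(a^{n+1/2}_{\mathrm{ML}} - \tau\,\Pi_V \A^*\!\left[\A\!\left(\Pi_V^* a^{n+1/2}_{\mathrm{ML}} + \Pi_W^* d^n_{\mathrm{ML}}\right) - \mathbf{z}\right]\right),\\
    d^{2n+2}_{\mathrm{BC}} &= \prox_{\tau \lambda_d g_d}\!\left(d^n_{\mathrm{ML}} - \tau\,\Pi_W \A^*\!\left[\A\!\left(\Pi_V^* a^{n+1/2}_{\mathrm{ML}} + \Pi_W^* d^n_{\mathrm{ML}}\right) - \mathbf{z}\right]\right),
\end{align*}
which are, term by term, the two fine-level updates of \eqref{eq5:example_multilevel}, giving $(a^{2n+2}_{\mathrm{BC}}, d^{2n+2}_{\mathrm{BC}}) = (a^{n+1}_{\mathrm{ML}}, d^{n+1}_{\mathrm{ML}})$.

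The only nontrivial step is really the coarse-step identification, and it has already been handled algebraically in the derivation that precedes the proposition (via Assumption \ref{ass5:simple_example} and Lemma \ref{lm:FOC}); the orthogonality $V\perp W$ and the identity $\mathbf{z} - \Pi_V^*\Pi_V \mathbf{z} = \Pi_W^*\Pi_W \mathbf{z}$ are exactly what make the coarse correction $v_H$ absorb the cross-term $\A\Pi_W^*d^n$ into the gradient of the fidelity, turning the coarse proximal-gradient step into a partial BC-FB update. The rest is bookkeeping on indices.
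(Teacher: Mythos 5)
Your proposal is correct and follows essentially the same route as the paper: the paper states the proposition as the immediate consequence of Lemma \ref{lm:FOC} together with the displayed simplification of the coarse proximal-gradient step (which shows it coincides with the block-coordinate update on $a$ with $d$ frozen), and the fine-level steps of \eqref{eq5:example_multilevel} are verbatim the both-blocks-active updates of \eqref{eq5:example_BC_FB}. Your explicit induction and iterate identification $(a^{2n}_{\mathrm{BC}},d^{2n}_{\mathrm{BC}})=(a^{n}_{\mathrm{ML}},d^{n}_{\mathrm{ML}})$, $a^{2n+1}_{\mathrm{BC}}=a^{n+1/2}_{\mathrm{ML}}$ simply makes precise the bookkeeping the paper leaves implicit.
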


A generalization of this argument to an arbitrary number of levels can be found in \cite{lauga_multilevel_2024}.

Multilevel algorithms have been shown to accelerate the resolution of image restoration problems in the literature \cite{javaherian2017,fung2020,plier2021,lauga_iml_2023,lauga2022fista,lauga2024radio,parpas2017}. Therefore following update rules from the multilevel literature is of gret interest for block algorithms (see example (4) in Figure \ref{fig:updates_scheme} and Figure \ref{fig5:two_block_wavelet}).

\section{Numerical experiments} \label{sec:numerical}
In this section, we present numerical experiments to assess the performance of the proposed construction of a block-coordinate descent algorithm mimicking multilevel iterations. We show that by emulating the behavior of multilevel algorithms, our $\acronym$ algorithm has superior practical performance with respect to the other possible choices of update rules. We also show that our algorithm outperforms the standard FB algorithm. %

\paragraph{Optimization problem.}
Consider the optimization problem
\begin{equation}
    \Argmin_{\mathbf{u} \in \RR^N} F(\mathbf{u}) = \frac{1}{2} \Vert \A \mathbf{u} -\mathbf{z} \Vert_2^2 +\lambda g(\D \mathbf{u}), \label{eq5:wavelet_optim_exp}
\end{equation} 
where $\A$ encodes a Gaussian blur, and $\D$ is the 2 levels wavelet decomposition, i.e., $g(\D \cdot)$ will penalize the approximation coefficients, and the three blocks of detail coefficients associated with $\mathbf{u}$ (see Figure \ref{fig5:two_level_wavelet}). We set
\begin{equation*}
    (\forall (a,d) \in V \times W), \quad g(a,d) = \lambda_a\sum_{i} \log(| a_i | + \epsilon) + \lambda_d\sum_{i} \log(| d_i | + \epsilon),
\end{equation*}
where $a_i,d_i$ denote the components of the vectors and $\epsilon>0$. 
The proximity operator of $g:= \{\lambda_a\} \log(|\cdot| + \epsilon)$ is known explicitly \cite{prater2022proximity} and given by the following:
\begin{equation*}
    (\forall a \in \RR), ~ \prox_{ \tau g}(a) \left\{\begin{array}{ll} 
        0, & \text{ if } |a|<2\sqrt{\tau \lambda_a}-\epsilon \\
        \max\left(0, \mathrm{sign}(a) \frac{|a|-\epsilon + \sqrt{(|a|+\epsilon)^2-4\tau \lambda_a}}{2}\right) & \text{ if } |a|=2\sqrt{\tau \lambda_a}-\epsilon  \\
        \mathrm{sign}(a)\frac{|a|-\epsilon + \sqrt{(|a|+\epsilon)^2-4\tau \lambda_a}}{2} & \text{ otherwise }
    \end{array}\right.,
\end{equation*}
The same formula holds for the detail coefficients with $\lambda_a$ replaced by $\lambda_d$. 
This problem is not convex, therefore stochastic BC-PG algorithms are not guaranteed to converge to a solution, but for completeness of the presentation, we include them in our experiments. %

\paragraph{Dataset.} In this section we consider the image of the Cameraman, of size $1024 \times 1024$. We will apply a Gaussian blur and a Gaussian noise to obtain the degraded image. The regularization will be done with a $2$-Level log sum-Haar wavelet. 

\paragraph{Experimental setup.} We compare our algorithm to several versions of BC-PG and to the standard forward-backward algorithm. The block methods all consider four blocks from the wavelet decomposition (see Figure \ref{fig5:two_level_wavelet}), with the first block corresponding to the approximation block and the remaining ones to the detail blocks\footnote{Differently from the previous section, the details are not grouped in a single block $d$, they rather form three separate blocks. }. 
We will consider the three following algorithms as baselines:
\begin{itemize}
    \item {FB}: the forward-backward algorithm. Update rule: 
    \begin{equation*}
        \forall n\in\mathbb{N}, \quad  (\varepsilon^n_0,\varepsilon^n_1, \varepsilon_2^n, \varepsilon_3^n) = (1,1,1,1).
    \end{equation*}
    \item {Cyclic BC-PG}: a cyclic BC-PG algorithm that updates only one block at a time in a cyclic manner. The order of the updates is chosen randomly for one cycle at initialization, and then kept identical for all cycles: 
    \begin{equation*}
        \forall n\in\mathbb{N}, \forall \ell \in\{0,1,2,3\}, \quad \varepsilon^n_\ell = \left\{\begin{array}{cc}
            1 & \text{if } \ell=\sigma(n ~\text{mod }4) \\%  \\
            0 & \text{otherwise}
        \end{array}\right.,
    \end{equation*}
    where $\sigma:\{0,1,2,3\}\to\{0,1,2,3\}$ is a permutation, and $n$ mod $4$ denotes the reminder of the division of $n$ by $4$.
    \item {Random BC-PG}: the BC-PG algorithm with one randomly chosen block updated at each iteration. Let \((\{0,1\}, \mathcal{F}, \mathsf{P})\) be a probability space where $\mathcal{F} = 2^{\{0,1\}}$, and the probability measure \(\mathsf{P}\) is defined by $\mathsf{P}(\{1\}) = p,$ $\mathsf{P}(\{0\}) = 1-p,$
where \(p \in [0,1]\). Then,
    \begin{equation*}
       \forall n\in\mathbb{N}, \exists! \ell \in\{0,1,2,3\},  \quad \varepsilon^n_\ell = 1.
    \end{equation*}
    The probability of activation is uniform across the blocks.
\end{itemize}

Note that FB and Cyclic BC-PG are both included in our framework, while Random BC-PG is not. 

We will also consider some new schemes, which where not theoretically covered until now, and that allow to exploit the structure of the problem:
\begin{itemize}
    \item $\acronym$: the proposed Flexible Block-Coordinate Forward-Backward algorithm, which alternates $m\leq 10$ \textit{coarse} updates on the approximations and $10-m$ full updates. For all $n \in \mathbb{N},$
    \begin{align*}
        & (\forall i=1,\ldots,m),~ (\varepsilon^{10n+i}_1,\varepsilon^{10n+i}_2,\varepsilon^{10n+i}_3,\varepsilon^{10n+i}_4) = (1,0,0,0), \\
        &  (\forall i=m+1,\ldots,10-m),~ (\varepsilon^{10n+i}_1,\varepsilon^{10n+i}_2,\varepsilon^{10n+i}_3,\varepsilon^{10n+i}_4) = (1,1,1,1).
    \end{align*}
    \item {Alternating $\acronym$}: the proposed $\acronym$ algorithm, alternating between $m$ updates on the approximation coefficients and $m-10$ updates on the detail coefficients. For all $n \in \mathbb{N},$
    \begin{align*}
        & \forall i=1,\ldots,m,~ (\varepsilon^{10n+i}_1,\varepsilon^{10n+i}_2,\varepsilon^{10n+i}_3,\varepsilon^{10n+i}_4) = (1,0,0,0) \\
        &  \forall i=m+1,\ldots,10-m,~ (\varepsilon^{10n+i}_1,\varepsilon^{10n+i}_2,\varepsilon^{10n+i}_3,\varepsilon^{10n+i}_4) = (0,1,1,1).
    \end{align*}
    \item Stochastic $\acronym$: a stochastic version of the proposed $\acronym$, with $m\leq 10$ coarse iterations followed by $10-m$ full updates in expectation. The proof of convergence of this algorithm is in Appendix \ref{secA:stochastic}. It is based on a stochastic and parallel BC-PG algorithm, with the selection rule in Lemma \ref{lemma5:V_scheme}.
    For all $n \in \mathbb{N},$ we impose that $\varepsilon^n_2 = \varepsilon^n_3 = \varepsilon^n_4:=\epsilon^n$ and
    \begin{align*}
        & \mathsf{P}(\varepsilon^{n}_1=1) = 1, \\
        &  \mathsf{P}\left(\epsilon^n= 1\right) >0.
    \end{align*}
    
\end{itemize}
In the proposed deterministic hierarchical algorithms, we update all the details coefficients simultaneously. With this choice we intend to show that our block update rule, forcing the update of the approximation coefficients at each iteration, is more efficient than the random one. Note that we also tested a Random BC-PG algorithm that splits the image in four equally sized patches, but the results were worse than all the other algorithms presented here.

\begin{figure}
    \centering
    \includegraphics[trim={0em 0em 0em 1em},clip,width=0.8\textwidth]{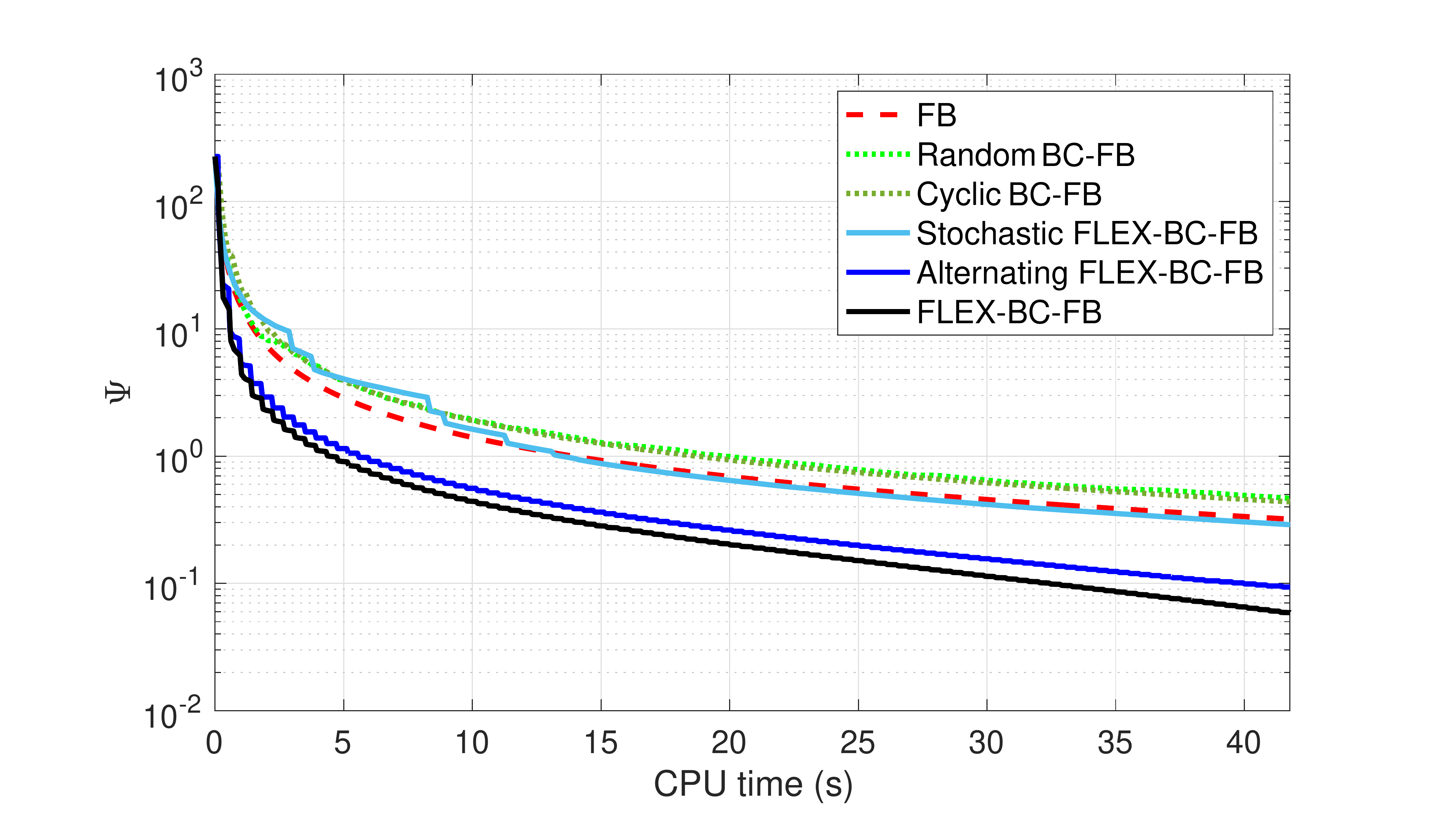} 
    \caption{\label{fig5:BCvsMLvsm}Comparison of the convergence of  $\mathtt{FB}$ (\textcolor{red}{red}), $\mathtt{cyclic ~BC-PG}$ (\textcolor{green}{green}), $\mathtt{random ~ BC-PG}$ (\textcolor{OliveGreen}{dark green}),  $\acronym$ (\textcolor{black}{black}), $\mathtt{Stochastic}$ $\acronym$ (\textcolor{cyan}{cyan}), and $\mathtt{Alternating ~(PI)}$ $\acronym$ (\textcolor{blue}{blue}) for the deconvolution problem regularized with $2$-Level log sum-Haar wavelet on a $1024 \times 1024$ image of the Cameraman. 
    Degradation: 
    Gaussian noise with $\sigma_{\mathrm{noise}} = 0.01$ and a Gaussian blur of size $40\times 40$ and $7$ standard deviation. Parameters choice: $\lambda_a = 1 \times 10^{-10}$, $\lambda_d = 1 \times 10^{-4}$, 
    $m=8$.
    }
\end{figure}

The results of this experiment are shown in Figure \ref{fig5:BCvsMLvsm}. We can clearly see that $\acronym$ and Alternating $\acronym$ vastly outperform the baselines, while the randomized version of $\acronym$ has similar performance to the others BCD algorithms. This difference in performance is due to the fact that our deterministic algorithm can exploit the knowledge of the order of the updates to avoid inefficient back and forth between the blocks, which requires computing an inverse wavelet transform and a wavelet transform to take into account the contribution of the other blocks.

Furthermore, we also compare our update rule to that of the "greedy" literature. By computing the partial gradient with respect to each block at each iteration of our $\acronym$ algorithm, we note that the norm of the partial gradient associated with the approximation coefficients is higher than the norm of the other partial gradients. This gap decreases along the iterations. This indicates that a Gauss-Southwell rule or a Gauss-Southwell-Lipschitz\footnote{The Gauss-Southwell-Lipschitz rule divides the norm of each partial gradient by its associated Lipschitz constant. Here the Lipschitz constant is equal to $1$ for all possible configuration of blocks.} rule \cite{nutini2022let} would behave as our proposed algorithm. This further validates the interest of our framework, since Gauss-Southwell BCD only guarantee the decrease of objective function value, and in the convex case. %

\section{Conclusion}
In this paper we introduce a general block-coordinate forward-backward algorithm, whose convergence is guaranteed in a non-convex setting for a wide range of update rules, encompassing known ones, e.g., cyclic, essentially cyclic, but also new ones, e.g., those inspired by multilevel algorithms. We show on a high dimensional problem that several instances of our $\acronym$ algorithm are competitive with respect to BCD algorithms from the literature and the standard FB algorithm. This general algorithm widens the applicability of BCD approaches, whose update rules  can now fully  exploit  the structure of the problem at hand.

\backmatter

\bmhead{Acknowledgements}
  This work was partially supported by the Fondation Simone et Cino Del Duca - Institut de France and the MEPHISTO (ANR-24-CE23-7039-01) project of the French National Agency for Research (ANR). The work of Luis Briceño-Arias was supported by the National Agence of Research and Development (ANID) from Chile, under the grants FONDECYT 1230257, MATH-AmSud 23-MATH-17, and Centro de Modelamiento Matem\'atico (CMM) BASAL fund FB210005 for centers of excellence. %

\bibliography{references}
\begin{appendices}
\section{Supplementary material for convergence proofs.} \label{secA:proofs_BCD}
In this section, we report the descent lemmas we used to assert the convergence of our $\acronym$ algorithm.
\begin{lemma}{\textbf{Descent lemma \cite{bertsekas1999nonlinear,ortega2000iterative}.}} \label{lm:descent}
Let $f:\RR^N \to \RR$ be a continuously differentiable function with Lipschitz continuous gradient and Lipschitz constant $\beta_f$. Then for any $\beta\geq \beta_f$,
\begin{equation}
    f(x) \leq f(y) + \langle x-y, \nabla f(y) \rangle + \frac{\beta_f}{2}\Vert x-y \Vert^2 \text{ for every } x,y \in \RR^N.
\end{equation}
\end{lemma}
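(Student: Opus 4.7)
The plan is to apply the fundamental theorem of calculus to the scalar function $t \mapsto f(y + t(x-y))$ on $[0,1]$, then bound the integrand using Cauchy--Schwarz and the Lipschitz property of $\nabla f$. Since the statement is classical and purely analytic, no structural arguments (smoothing, KŁ, block structure) are needed here: the single hypothesis in play is $\|\nabla f(u) - \nabla f(v)\| \le \beta_f \|u-v\|$ for all $u,v \in \RR^N$.

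First, I would set $\varphi(t) = f(y + t(x-y))$ for $t \in [0,1]$. Since $f$ is continuously differentiable, $\varphi$ is $C^1$ with $\varphi'(t) = \langle \nabla f(y+t(x-y)), x-y \rangle$, so the fundamental theorem of calculus yields
\begin{equation*}
f(x) - f(y) \;=\; \varphi(1) - \varphi(0) \;=\; \int_0^1 \langle \nabla f(y+t(x-y)), x-y \rangle \, dt.
\end{equation*}
Subtracting the linear term $\langle \nabla f(y), x-y \rangle = \int_0^1 \langle \nabla f(y), x-y\rangle\, dt$ gives
\begin{equation*}
f(x) - f(y) - \langle \nabla f(y), x-y \rangle \;=\; \int_0^1 \big\langle \nabla f(y+t(x-y)) - \nabla f(y),\, x-y \big\rangle \, dt.
\end{equation*}

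Next, I would apply the Cauchy--Schwarz inequality to the integrand and then the Lipschitz assumption on $\nabla f$ with constant $\beta_f$, which gives $\|\nabla f(y+t(x-y)) - \nabla f(y)\| \le \beta_f \, t \, \|x-y\|$. Combining these bounds,
\begin{equation*}
f(x) - f(y) - \langle \nabla f(y), x-y\rangle \;\le\; \int_0^1 \beta_f\, t\, \|x-y\|^2 \, dt \;=\; \frac{\beta_f}{2}\|x-y\|^2,
\end{equation*}
which is exactly the claimed inequality. There is no real obstacle: the only subtle point is reading the statement correctly (the displayed bound uses $\beta_f$ rather than the generic $\beta \ge \beta_f$ mentioned in the hypothesis, so the result for any such $\beta$ follows immediately by monotonicity in the constant).
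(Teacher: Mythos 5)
Your proof is correct and is the standard integral-form argument (fundamental theorem of calculus along the segment, Cauchy--Schwarz, then the Lipschitz bound on $\nabla f$); the paper itself gives no proof for this lemma and simply cites \cite{bertsekas1999nonlinear,ortega2000iterative}, where exactly this argument appears. Your closing remark about the harmless mismatch between $\beta$ and $\beta_f$ in the statement is also apt.
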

\begin{lemma}{\textbf{(Non-convex) proximal-gradient descent lemma \cite{bolte2014proximal}.}} \label{lm:descent_prox}
    Let $f:\RR^N\to \RR$ be a continuously differentiable function with Lipschitz continuous gradient and Lipschitz constant $\beta_f$. Let $g:\RR^N\to \RR$ be a proper, lower semicontinuous function with $\inf_{\RR^N} g >- \infty$. If
    \begin{equation}
        \label{eq5:fb}
        y \in \prox_{\tau g}(x- \tau \nabla f(x)),
    \end{equation}
    then for any $0<\tau<\frac{1}{\beta_f}$ %
    \begin{equation}
        f(y) + g(y) + \frac{1}{2}\left(\frac{1}{\tau} - \beta_f\right) \Vert x-y \Vert^2 \leq f(x) + g(x).
    \end{equation}
\end{lemma}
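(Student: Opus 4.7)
The plan is to combine the variational inequality implied by the proximal step with the standard descent inequality for smooth functions with Lipschitz gradient. Both bounds are evaluated at the same pair $(x,y)$, and adding them yields the required estimate after a single rearrangement.

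First, I would exploit the defining property of the proximity operator. Since $y\in\prox_{\tau g}(x-\tau\nabla f(x))$, by definition $y$ achieves the minimum in \eqref{eq:prox_equation} with $\mathbf{x}$ replaced by $x-\tau\nabla f(x)$. Evaluating the objective at the feasible point $u=x$ and at $u=y$ and using that $y$ is a minimizer, one obtains after expanding the square and cancelling common terms
\begin{equation*}
g(y)+\frac{1}{2\tau}\|y-x\|^2+\langle \nabla f(x), y-x\rangle \leq g(x).
\end{equation*}
This is the only place where the proximal characterization is used, and it requires nothing beyond lower semicontinuity and properness of $g$ together with $\inf g>-\infty$ (to guarantee the $\Argmin$ set is non-empty and the inequality makes sense). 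Note that convexity of $g$ is not needed because we only exploit that $y$ is a minimizer, not a subgradient calculus identity.

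Next I would invoke the descent lemma (Lemma \ref{lm:descent}) applied to the pair $(y,x)$, which gives
\begin{equation*}
f(y) \leq f(x) + \langle \nabla f(x), y-x\rangle + \frac{\beta_f}{2}\|y-x\|^2.
\end{equation*}
Adding these two inequalities makes the cross term $\langle \nabla f(x),y-x\rangle$ cancel, yielding
\begin{equation*}
f(y)+g(y)+\frac{1}{2}\Big(\frac{1}{\tau}-\beta_f\Big)\|y-x\|^2 \leq f(x)+g(x),
\end{equation*}
which is the desired inequality. The assumption $0<\tau<1/\beta_f$ is used only to ensure that the coefficient in front of $\|y-x\|^2$ is strictly positive, thereby making the statement a genuine sufficient decrease property.

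I do not anticipate a significant obstacle here, since the argument is essentially a two-line combination. The only subtlety is the book-keeping step of adding the two bounds so that the gradient cross-term cancels exactly, and checking that no convexity of $g$ is implicitly required: the proximal inequality used is the one obtained by comparing the optimal value with the value at $u=x$, which is valid for any proper lower semicontinuous $g$ bounded below.
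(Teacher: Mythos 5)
Your proposal is correct and follows essentially the same route as the paper: compare the prox objective at the minimizer $y$ with its value at $u=x$ to get $g(y)+\frac{1}{2\tau}\Vert y-x\Vert^2+\langle \nabla f(x),y-x\rangle\leq g(x)$, then add the descent lemma (Lemma \ref{lm:descent}) so the gradient cross-term cancels. Your remarks that convexity of $g$ is not needed and that well-posedness of the $\Argmin$ follows from properness, lower semicontinuity and boundedness below match the paper's appeal to \cite[Proposition 2]{bolte2014proximal}.
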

    \begin{proof}
    First $\prox_{\tau g}(\cdot)$ is well-defined by \cite[Proposition 2]{bolte2014proximal}.  Thus, for all $\mathbf{x}\in \RR^N$, there exists $y\in  \prox_{\tau g}(x- \tau \nabla f(x))$. This inequality comes directly from \cite[Lemma 2]{bolte2014proximal}, but for completeness of the argument we reproduce it here. 
    By definition of the proximity operator:
    \begin{equation*}
        y \in \argmin_{z\in\RR^N} \langle z-x,\nabla f (x) \rangle + g(z) + \frac{1}{2\tau} \Vert z-x\Vert^2
    \end{equation*}
    Thus taking $z=x$ we obtain
    \begin{align*}
        \langle y-x,\nabla f (x) \rangle + g(y) + \frac{1}{2\tau} \Vert y-x\Vert^2 & \leq \langle x-x,\nabla f (x) \rangle + g(x) + \frac{1}{2\tau} \Vert x-x\Vert^2 \\
        & \leq g(x)
    \end{align*}
    Now invoking Lemma \ref{lm:descent}, we have:
    \begin{equation*}
            \langle \nabla f(x), x-y \rangle \leq f(x)-f(y) + \frac{\beta_f}{2}\Vert x-y \Vert^2
    \end{equation*}
    which yields for any $0<\tau<\frac{1}{\beta_f}$
    \begin{equation*}
        f(y) + g(y) + \frac{1}{2}\left(\frac{1}{\tau} - \beta_f\right) \Vert x-y \Vert^2 \leq f(x) + g(x).
    \end{equation*}
    \end{proof} 
\section{Convergence of a stochastic $\acronym$ for convex optimization} 
\label{secA:stochastic}
In this section, we briefly present a convergence result for a randomized version of our Flexible Block-Coordinate Forward-Backward algorithm. The convergence result in itself is a direct application of \cite[Theorem 4.9]{salzo_parallel_2022}. We aim here to construct a stochastic BC FB that, in expectation, mirrors the behavior of our multilevel algorithm and is convergent. Such algorithm follows classic rules of stochastic BCD algorithms that can update  the blocks in parallel. 

With such algorithm we will be able to have a complete comparison of the update rules available for BC descent algorithms. Recall that the algorithm is of the following form:
Let $(\boldsymbol{\varepsilon}^n)_{n\in \mathbb{N}} = (\varepsilon_1^{n},\ldots,\varepsilon_L^{n})_{n\in \mathbb{N}}$ be a sequence of variables with value in $\{0,1\}^L$. Let $(\tau_\ell)_{1\leq \ell \leq L} \in \RR^{L}_{++}$ and $\mathbf{x}^0 = (x_1^{0},\ldots,x_L^{0}) \in $ dom $g$. Iterate %
\begin{equation}\label{eq5:SBC_FB}
\begin{array}{l}
\text{for } n=0,1,\dots \\ 
\left\lfloor
\begin{array}{l}
\text{for } \ell=1,\ldots,L \\
\left\lfloor 
\begin{array}{l}
    x_\ell^{n+1} = x_\ell^n + \varepsilon_\ell^n \left( \prox_{\tau_\ell g_\ell} \left(x_\ell^n - \tau_\ell \nabla_\ell f(\mathbf{u}^n) \right) - x_\ell^n\right).
\end{array}
\right.
\end{array}
\right.
\end{array}
\end{equation}
The main difference w.r.t. the paradigm proposed in the previous sections is that  $\varepsilon$ and $\mathbf{x}$ are now random variables. Consider the following assumptions: \medskip
\begin{assumption}
    \textcolor{white}{y}
    \begin{itemize}
        \item[\normalfont A$6$] $f:\Hi \rightarrow \RR$ is convex and continuously differentiable,
        \item[\normalfont A$7$] for every $\ell = 1,\ldots, L$, $g_\ell:\Hi_\ell \rightarrow (-\infty,+\infty]$ is proper, convex, and lower semicontinuous.
        \item[\normalfont A$8$] $\boldsymbol{\varepsilon} =(\varepsilon_1,\ldots,\varepsilon_L)$ is a random variable with values in $\{0,1\}^L$, such that for every $\ell \in \{1,\ldots,L\},$ $\mathsf{P}(\varepsilon_\ell = 1)>0$ and $\mathsf{P}(\boldsymbol{\varepsilon}=(0,\ldots,0))=0$.
    \end{itemize}
\end{assumption}
We can now present a way to construct update rules to mimic our multilevel algorithm that verify Assumption A$8$. As multilevel algorithms mostly employ $V$-scheme in practice \cite{fung2020,javaherian2017,parpas2017,lauga_iml_2023,lauga2022fista,lauga2024radio}, we present an update rule for this scheme. \medskip
\begin{lemma}{\textbf{V-scheme probabilities for stochastic $\acronym$.}} \label{lemma5:V_scheme}
    Suppose that $\boldsymbol{\varepsilon} =(\varepsilon_1,\ldots,\varepsilon_{L})$ is a random variable with values in $\{0,1\}^L$, such that $\mathsf{P}(\varepsilon_1 = 1) = 1$ and for every $\ell \in \{1,\ldots,L-1\},$
    \begin{itemize}
        \item $\mathsf{P}(\varepsilon_{\ell+1}=1|\varepsilon_{\ell}=1) >0 $,
        \item $\mathsf{P}(\varepsilon_{\ell+1}=1|\varepsilon_{\ell}=0) = 0$.
    \end{itemize}

    Then, for every $\ell \in \{1,\ldots,L\},$ $\mathsf{P}(\varepsilon_\ell = 1)>0$ and $\mathsf{P}(\boldsymbol{\varepsilon}=(0,\ldots,0))=0$.
\end{lemma}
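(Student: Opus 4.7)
The plan is to prove each of the two conclusions separately, both by short and essentially elementary probabilistic arguments, with the first part being an induction driven by the two conditional probability hypotheses and the second part being an immediate consequence of $\mathsf{P}(\varepsilon_1=1)=1$.

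First, I would establish $\mathsf{P}(\varepsilon_\ell=1)>0$ for all $\ell\in\{1,\ldots,L\}$ by induction on $\ell$. The base case $\ell=1$ is given by hypothesis, since $\mathsf{P}(\varepsilon_1=1)=1>0$. For the inductive step, suppose $\mathsf{P}(\varepsilon_\ell=1)>0$ for some $\ell\in\{1,\ldots,L-1\}$. Using the law of total probability and the nonnegativity of the terms, I would write
\begin{equation*}
\mathsf{P}(\varepsilon_{\ell+1}=1)\geq \mathsf{P}(\varepsilon_{\ell+1}=1\mid \varepsilon_\ell=1)\,\mathsf{P}(\varepsilon_\ell=1),
\end{equation*}
and then both factors on the right-hand side are strictly positive, by the hypothesis $\mathsf{P}(\varepsilon_{\ell+1}=1\mid \varepsilon_\ell=1)>0$ and by the inductive hypothesis respectively. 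Hence $\mathsf{P}(\varepsilon_{\ell+1}=1)>0$, closing the induction.

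Second, to show $\mathsf{P}(\boldsymbol{\varepsilon}=(0,\ldots,0))=0$, it suffices to observe that the event $\{\boldsymbol{\varepsilon}=(0,\ldots,0)\}$ is contained in $\{\varepsilon_1=0\}$. By monotonicity of probability,
\begin{equation*}
\mathsf{P}(\boldsymbol{\varepsilon}=(0,\ldots,0))\leq \mathsf{P}(\varepsilon_1=0)=1-\mathsf{P}(\varepsilon_1=1)=0,
\end{equation*}
which gives the desired equality. There is no real obstacle here; the only mild subtlety is making sure that the two conditional hypotheses are invoked in the right direction (we never use the second conditional hypothesis $\mathsf{P}(\varepsilon_{\ell+1}=1\mid\varepsilon_\ell=0)=0$ for either conclusion—it is a structural property of the V-scheme rather than something needed for this particular lemma, so I would flag this in a short remark after the proof to avoid giving the reader the impression that it was used).
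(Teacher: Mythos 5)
Your proof is correct and follows essentially the same route as the paper, which chains the conditional probabilities into the product $\mathsf{P}(\varepsilon_\ell=1)=\bigl(\prod_{j=2}^{\ell}\mathsf{P}(\varepsilon_j=1\mid\varepsilon_{j-1}=1)\bigr)\mathsf{P}(\varepsilon_1=1)>0$ and dismisses the second claim as immediate; your induction via the total-probability inequality is the same computation phrased step by step. Your closing observation is accurate and worth keeping: the paper's exact product identity implicitly uses $\mathsf{P}(\varepsilon_{\ell+1}=1\mid\varepsilon_\ell=0)=0$ to drop the other branch of the total-probability expansion, whereas your inequality version shows that this hypothesis is not actually needed for either conclusion of the lemma.
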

\begin{proof}
    The second point is straightforward. For the first point, simply remark that for every $\ell \in \{2,\ldots,L\}$:
    \begin{equation*}
        \mathsf{P}(\varepsilon_\ell = 1) = \mathsf{P}(\varepsilon_\ell = 1 | \varepsilon_{\ell-1} = 1) \mathsf{P}(\varepsilon_{\ell-1} = 1),
    \end{equation*}
    then one directly has:
    \begin{equation*}
        \mathsf{P}(\varepsilon_\ell = 1) = \left( \prod_{j=2}^{j=\ell} \mathsf{P}(\varepsilon_j = 1 | \varepsilon_{j-1} = 1) \right)\mathsf{P}(\varepsilon_{1} = 1).
    \end{equation*}
    which is strictly greater than $0$.
\end{proof}

One can see that with this construction we will update the coarsest level at each iteration, and that updating "fine" levels will also force us to update coarser levels, which is typical of multilevel methods.

The sampling of $\boldsymbol{\varepsilon}$ is done sequentially by increasing $\ell$ until we reach the first zero occurrence. In order to update all levels as often as possible, the value of $\mathsf{P}(\varepsilon_{\ell+1}=1|\varepsilon_{\ell}=1)$ should be close to $1$ for large $\ell$. 

\paragraph{Choosing the right value for the conditional probabilities.} In a typical V-scheme, a multilevel algorithm would compute $m$ iterations at each coarse level, going upwards in the resolution. After that it would compute one iteration at fine level. Thus, we should adjust the conditional probabilities of activating each block so that with high probability we update $m\geq 0$ times the coarsest level alone, then $m$ times the coarsest level and the second to last coarsest level, and so on.  We thus impose for all $\ell$:
\begin{equation*}
    \mathsf{P}(\varepsilon_\ell=1) = \left(\frac{1}{m}\right)^\ell,
\end{equation*}
which yields:
\begin{equation*}
    \mathsf{P}(\varepsilon_{\ell+1}=1|\varepsilon_{\ell}=1) = \frac{1}{m}.
\end{equation*}

\paragraph{Convergence of the stochastic algorithm.} We can now state the convergence result for the stochastic version of our algorithm. The proof is a direct application of \cite[Theorem 4.9]{salzo_parallel_2022} and is therefore omitted. We denote by $\mathsf{E}$ the expected value.\medskip
\begin{theorem}{\textbf{Convergence of stochastic $\acronym$ \cite[Theorem 4.9]{salzo_parallel_2022}.}} \label{th5:stochastic}
     Let $(\boldsymbol{\varepsilon}_n)_{n\in \mathbb{N}} = (\varepsilon_1^{n},\ldots,\varepsilon_L^{n})_{n\in \mathbb{N}}$ be a sequence of independent copies of $\boldsymbol{\varepsilon}$. Let $(\tau_\ell)_{1\leq \ell \leq L} \in \RR^{L}_{++}$ and $x_0 = (x_{1,0},\ldots,x_{L,0}) \equiv \mathbf{x}^0 \in $ dom $g$ be a constant random variable.  
Set $\delta = \max_{1\leq \ell \leq L} \tau_\ell \beta_\ell$ (the block Lipschitz constants, see Assumption \ref{ass:deterministic2}) and $\mathsf{p}_{\text{min}} = \min_{1\leq \ell \leq L} \mathsf{P}(\varepsilon_\ell = 1)$. 

Set $\mathbf{Id} = \bigoplus_{\ell=1}^{L} \frac{1}{\tau_\ell \mathsf{P}(\varepsilon_\ell = 1)} \Id_\ell$ (the identity operators on $\Hi_\ell$), $\Psi_* = \inf \Psi,$ and $S_* = \argmin \Psi \subset \Hi$. Then the following hold.
\begin{enumerate}[label=(\roman*)]
    \item $\mathsf{E}[\Psi(\mathbf{x}^n)] \rightarrow \Psi_*$.
    \item Suppose that $S_* \neq \emptyset$. Then $\mathsf{E}[\Psi(\mathbf{x}^n)]- \Psi_* = o(1/n)$ and for every integer $n\geq 1$,
    \begin{equation*}
        \mathsf{E}[\Psi(\mathbf{x}^n)] - \Psi_* \leq \left[\frac{\text{dist}_\mathbf{Id}^2(x_0,S_*)}{2} + \left(\frac{\max\{1,(2-\delta)^{-1}\}}{\mathsf{p}_{\text{min}}}-1\right)(\Psi(\mathbf{x}^0)-\Psi_*)\right]\frac{1}{n}
    \end{equation*}
    Moreover there exists a random variable $x_*$ taking values in $S_*$ such that $\mathbf{x}^n \rightharpoonup x_*$.
\end{enumerate}
\end{theorem}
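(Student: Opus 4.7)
The plan is to reduce the statement to a direct application of \cite[Theorem 4.9]{salzo_parallel_2022} by verifying that the present setting fits their framework. First I would identify the correspondences: Assumption A6 provides convexity and $C^1$ regularity of $f$; Assumption A7 provides the proper, convex, lower-semicontinuous structure of each $g_\ell$ (whence of $g = \sum_\ell g_\ell$); and Assumption A8 supplies the random sampling structure with strictly positive marginals $\mathsf{P}(\varepsilon_\ell = 1) > 0$ and with $\mathsf{P}(\boldsymbol{\varepsilon} = 0) = 0$, ensuring that at least one block is activated at every step. The i.i.d.\ hypothesis on $(\boldsymbol{\varepsilon}^n)_{n \in \mathbb{N}}$ matches theirs verbatim.

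The block-smoothness condition required by the target theorem is obtained from Assumption A4 by specializing to $j = \ell$: the partial gradient $\nabla_\ell f$ is $\beta_\ell := \beta_{\ell,\ell}$-Lipschitz along the $\ell$-th coordinate, so that $\delta = \max_\ell \tau_\ell \beta_\ell$ is well-defined and the usual step-size condition $\delta < 2$ underlying the quantitative bound is enforced by the choice of $(\tau_\ell)_{1 \leq \ell \leq L}$. The weighted identity $\mathbf{Id} = \bigoplus_\ell (\tau_\ell \mathsf{P}(\varepsilon_\ell = 1))^{-1}\Id_\ell$ is the standard rescaling that absorbs the non-uniform activation probabilities and renders the iteration Quasi-Fej\'er monotone in expectation with respect to $S_*$. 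With these identifications, \cite[Theorem 4.9]{salzo_parallel_2022} delivers (i) $\mathsf{E}[\Psi(\mathbf{x}^n)] \to \Psi_*$, and, whenever $S_* \neq \emptyset$, also (ii) the explicit upper bound on the expected suboptimality, the $o(1/n)$ rate, and the weak almost-sure convergence of $(\mathbf{x}^n)$ to some $S_*$-valued random variable $x_*$.

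The main technical obstacle, if one were to reconstruct the proof instead of citing it, is the derivation of a stochastic Quasi-Fej\'er estimate in the weighted metric: one must establish an inequality of the form $\mathsf{E}[\textrm{dist}_{\mathbf{Id}}^2(\mathbf{x}^{n+1}, S_*) \mid \mathcal{F}_n] \leq \textrm{dist}_{\mathbf{Id}}^2(\mathbf{x}^n, S_*) - c\,(\Psi(\mathbf{x}^n) - \Psi_*)$ for some $c$ depending only on $\delta$ and $\mathsf{p}_{\min}$, which is exactly where the parallel and possibly correlated random block selection interacts nontrivially with the forward--backward step. From such a descent-in-expectation inequality, convergence of $\mathsf{E}[\Psi(\mathbf{x}^n)]$ to $\Psi_*$ and the $\mathcal{O}(1/n)$ rate follow by summation and telescoping; the upgrade to $o(1/n)$ exploits monotonicity of the Lyapunov sequence on a sparse extracting subsequence; and weak convergence of the iterates follows from the stochastic Opial-type lemma of \cite{combettes2015stochastic} adapted to the weighted inner product associated with $\mathbf{Id}$.
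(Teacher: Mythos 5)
Your proposal matches the paper exactly: the paper also proves this theorem by a direct application of \cite[Theorem 4.9]{salzo_parallel_2022}, checking that Assumptions A6--A8 and the block-smoothness from A4 place the algorithm in that framework, and it omits any further argument. The additional sketch you give of the internal quasi-Fej\'er mechanism is consistent with the cited result and does not change the route.
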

\section{Implementation details for the numerical experiments}
\subsection{Efficient computation of the gradient of the approximation}
In our numerical experiments, the degradation is a Gaussian blur. As this blur is symmetric, the blurring matrix $\A \in \RR^{N \times N}$ can be expressed as a Kronecker product \cite{hansen2006} 
\begin{equation*}
    \A = \A_r \otimes \A_c
\end{equation*}
where $\A_r$ and $\A_c$ are $\sqrt{N} \times \sqrt{N}$ real matrices that decompose the action of the blur into its vertical ($c$ for columns) and horizontal ($r$ for rows) components. Using the following relationship
\begin{equation*}
    \A x = \textrm{vec}\left( \A_c X \A_r^\top \right)
\end{equation*}
where vec denotes the vectorization, and $X\in \mathbb{R}^{\sqrt{N} \times \sqrt{N}}$ is our image in its matrix form, we can avoid storing $\A$ and exploit a similar relationship to compute the gradient of the "coarse" function w.r.t. the approximation coefficients without having to compute the global gradient.

Indeed, under similar conditions as for the Gaussian blur \cite{lauga_iml_2023}, the projection operation $\Pi_V$ can be written using a Kronecker product. We have \cite[Section 3.2]{lauga_iml_2023} that:
\begin{equation*}
    \Pi_V = \R_{\mathbf{q},r} \otimes \R_{\mathbf{q},c}
\end{equation*}
where $\R_{\mathbf{q},r},\R_{\mathbf{q},c} \in \RR^{\sqrt{N} \times \sqrt{N}/2}$ are Toeplitz matrices generated from the quadrature mirror filter $\mathbf{q}$ \cite[Section 3.2]{lauga_iml_2023} of the wavelet transform that defined $\Pi_V$. For square images, $\R_{\mathbf{q},r} = \R_{\mathbf{q},c} := \R$.

Therefore the gradient of the data fidelity term w.r.t. the approximation coefficients can be expressed as:
\begin{align}
    & \Pi_V \A^* \left( \A\Pi_V^* a^n - \Pi_V^*\Pi_V \mathbf{z} \right)  = \R \A_c^\top \A_c \R^\top a_o \R \A_r^\top \A_r \R^\top - \R \A_c^\top \R^\top \R  z  \R^\top \R \A_r \R^\top  
\end{align}
As $\R \A_c^\top \A_c \R^\top$, 
    $\R \A_r^\top \A_r \R^\top$, 
    $\R \A_c^\top \R^\top$, 
   $\R z \R^\top$, 
    $\R \A_r \R^\top$, 
all belong to $\RR^{N_H,N_H}$ and can be pre-computed, the gradient of $1/2\Vert \A\cdot-\mathbf{z}\Vert_2^2$ w.r.t. the approximation coefficients can be evaluated efficiently.
\end{appendices}

\end{document}